\newcommand{\FF}{{\cal F}}
\newcommand{\LL}{{\cal L}}
\newcommand{\NN}{{\cal N}}
\newcommand{\TT}{{\cal T}}
\newcommand{\WW}{{\cal W}}
\newcommand{\BN}{{\mathbb{N}}}
\newcommand{\BR}{{\mathbb{R}}}
\newcommand{\BX}{{\mathbb{X}}}
\newcommand{\fch}{\mathbf{1}}
\newtheorem{theorem}{\bf Theorem}[section]
\newtheorem{proposition}[theorem]{\bf Proposition}
\newtheorem{lemma}[theorem]{\bf Lemma}
\newtheorem{corollary}[theorem]{\bf Corollary}
\theoremstyle{definition}
\newtheorem*{definition}{Definition}
\newtheorem{example}[theorem]{\bf Example}
\newtheorem{remark}[theorem]{Remark}
\numberwithin{equation}{section}
\begin{document}

\title {The valuation of American options in a multidimensional exponential
L\'evy model}
\author {Tomasz Klimsiak and Andrzej Rozkosz}
\date{}
\maketitle

\begin{abstract}
We consider the problem of valuation of American options written
on dividend-paying assets whose price dynamics follow a
multidimensional exponential L\'evy model. We carefully examine
the relation between the option prices, related partial
integro-differential variational inequalities and reflected
backward stochastic differential equations. In particular, we
prove regularity results for the value function and obtain the
early exercise premium formula for a broad class of payoff
functions.
\end{abstract}
{\bf Key words:} American option, exponential L\'evy
model, optimal stopping, obstacle problem, backward stochastic
differential equation.

\footnotetext{This work was supported by Polish National Science Centre Grant No.
2012/07/B/ST1/03508.}

\footnotetext{T. Klimsiak: Institute of Mathematics, Polish
Academy of Sciences, \'Sniadeckich 8, 00-956 Warszawa, Poland, and
Faculty of Mathematics and Computer Science, Nicolaus Copernicus
University, Chopina 12/18, 87-100 Toru\'n, Poland; e-mail:
tomas@mat.umk.pl.}

\footnotetext{A. Rozkosz: Faculty of Mathematics and Computer
Science, Nicolaus Copernicus University, Chopina 12/18, 87-100
Toru\'n, Poland; e-mail: rozkosz@mat.umk.pl.}

\section{Introduction}
\label{sec1}

In this paper, we consider the problem of valuation of American
options in  a market model consisting of $d\ge1$ assets whose
prices $X^{s,x,1},\dots,X^{s,x,d}$ on the time interval $[s,T]$
under some risk-neutral probability measure $P$ are represented by
\begin{equation}
\label{eq1.1}
X^{s,x,i}_t=x_ie^{(r-\delta_i)(t-s)+\xi^i_t-\xi^i_s},\quad
t\in[s,T],\quad i=1,\dots,d.
\end{equation}
Here,  $x_i>0$, $i=1,\dots,d$, $r\ge0$ is the interest rate,
$\delta_i\ge0$, $i=1,\dots,d$, are dividend rates and
$\xi=(\xi^1,\dots,\xi^d)$ is a $d$-dimensional L\'evy process such
that $\xi_0=0$. We assume that $\xi$ has a nondegenerate Gaussian
component and that its L\'evy measure $\nu$ satisfies some natural
integrability conditions.  Note that, in case that $\nu\equiv0$,  our
model reduces to the classical multidimensional Black and Scholes
model with dividend-paying assets.

Let $\psi:\BR^d\rightarrow\BR_+$ be a continuous function with
polynomial growth. Under the fixed risk-neutral measure $P$, in  the
L\'evy model (\ref{eq1.1}), the value at time $s$ of the European
option with payoff function $\psi$ and expiration time $T$ is
given by
\[
V^E(s,x)=Ee^{-r(T-s)}\psi(X^{s,x}_T),
\]
while the value of an American option is given by
\begin{equation}
\label{eq1.2} V(s,x)=\sup_{\tau\in\TT_{s,T}}
Ee^{-r(\tau-s)}\psi(X^{s,x}_\tau),
\end{equation}
where the supremum is taken over the set $\TT_{s,T}$ of all
stopping times (with respect to the filtration generated by $\xi$)
with values in $[s,T]$. It is known (see Pham (1998)) and also
Cont and Tankov (2004), and  Reich, Schwab and Winter (2010)) that, for some assumptions of $\psi,\nu$, the value
function $V$ can be characterized as the unique viscosity solution
of the obstacle problem (or, in another terminology,
integro-differential variational inequality) of the form
\begin{equation}
\label{eq1.3} \min\{-\partial_s V-L u+rV,V-\psi\}=0,\quad
V(T)=\psi,
\end{equation}
where $L$ is the infinitesimal generator of the process $X^{s,x}$.
The main purpose of this paper  is to study two different, but, as
we shall see, closely related goals. The first  is to
carefully examine the relation between the stopping problem
(\ref{eq1.2}) and the Sobolev solutions of (\ref{eq1.3}). In
particular, the problem is to investigate the regularity of the
solution of (\ref{eq1.3}). The second goal is to derive the early
exercise premium formula, i.e., a formula for the difference
$V-V^E$.

In the case of the multidimensional Black and Scholes model, these
problems are quite well investigated (see
Jaillet, Lamberton and Lapeyre (1990), Broadie and Detemple (1997), Villeneuve (1999), Detemple, Feng and Tian (2003), Laurence and Salsa (2009),  Klimsiak and Rozkosz (2016), and the monograph by Detemple (2006)). In the case of $\nu\neq0$, the situation is different.  Although the valuation of
American options in the exponential L\'evy model has been a subject of
numerous investigations (see, e.g., Pham (1997, 1998), Gukhal (2001), Lamberton and Mikou (2008, 2013), Reich et. al. (2010), and the monograph by Cont and Tankov (2004); for numerical methods see, e.g.,
Cont and Tankov (2004), Hilber, Reich, Schwab and Winter (2009), and Matache, von Petersdorff and Schwab  (2004)),  relatively little is known about regularity of $V$, and no general  formula for $V-V^E$ is known, even in the case of $d=1$. Partial results in this direction were obtained in
Pham (1997), Gukhal (2001) and Lamberton and Mikou (2008, 2013) in the case that $d=1$. In particular, in Lamberton and Mikou (2008),
it is shown that  the value of the American put satisfies
(\ref{eq1.3}) in the sense of distributions, and in Lamberton and Mikou (2013), the exercise premium formula is derived. In Pham (1997) an exercise
premium formula for American put is derived by using the theory of
the viscosity solutions of (\ref{eq1.3}).

In the present paper, we consider the Sobolev space solutions  of
(\ref{eq1.3}). From the general theory of variational inequalities,
it follows that (\ref{eq1.3}) has a variational solution $u$ in
the space $\WW^{0,1}_{\varrho}$ with some weight $\varrho$
depending on $\psi$ (for the definitions of various Sobolev spaces,
see Section \ref{sec4.1}). To obtain better regularity of $u$,  we
regard (\ref{eq1.3}) as a complementarity problem (see Bally, Caballero, Fernandez and El Karoui (2002), and Kinderlehrer and Stampacchia (1980)). This means that, by a solution of (\ref{eq1.3}), we
mean a pair $(V,\mu)$ consisting of $V\in\WW^{0,1}_{\varrho}\cap
C([0,T]\times\BR^d)$ and a Radon measure $\mu$ on
$Q_T=[0,T]\times\BR^d$ such that
\begin{equation}
\label{eq1.4} V(T)=\psi,\quad u\ge\psi,\quad
\int_{Q_T}(V-\psi)\varrho^2\,d\mu=0
\end{equation}
and the equation
\begin{equation}
\label{eq1.5} \partial_su+L V=rV-\mu
\end{equation}
is satisfied in the strong sense. Our main result says that, for a
broad class of payoff functions $\psi$, the measure $\mu$ is
absolutely continuous with respect to the Lebesgue measure and that its
density $g$ is square integrable with weight $\varrho^2$. This
shows that, in fact, $V$ satisfies (\ref{eq1.4}), (\ref{eq1.5}) with
$\mu$ replaced by $g$, which  allows us to use results on the regularity
of solutions of the Cauchy problem to show that $u\in
W^{1,2}_{\varrho}$ (in fact, our results on the Cauchy problem
consist of suitable modification of the classical results of
Bensoussan and Lions (1982)). We also compute a formula for $g$. Roughly speaking,
this formula can be translated into the exercise premium formula. Our
exercise premium formula considerably generalizes the results of Lamberton and Mikou (2008, 2013) (note, however, that, in these papers,  the case
with no Gaussian component is also considered). Alternately, it
generalizes the formula proved in Klimsiak and Rozkosz (2016) in the setting of the multidimensional Black and Scholes model.

The proof of our main results  relies on careful analysis of
the reflected backward backward stochastic differential equation
associated with the problem (\ref{eq1.4}), (\ref{eq1.5}). This general
idea comes from  Klimisk and Rozkosz (2011, 2016).

\section{Exponential L\'evy model}
\label{sec2}

Let $\xi=\{\xi_t:t\ge0\}$ be a $d$-dimensional L\'evy process with
generating triplet $(a,\nu,\gamma)$, i.e., a stochastically
continuous  c\`adl\`ag stochastic process with independent and
stationary increments such that $\xi_0=0$, and for $t>0$, the
characteristic function of $\xi_t$ has the following
L\'evy-Khintchine representation
\[
Ee^{i(z,\xi_t)}=e^{t\phi(z)},\quad z\in\BR^d,
\]
where
\[
\phi(z)=-\frac12(z,az)+i(\gamma,z)
+\int_{\BR^d}(e^{i(z,y)}-1-i(z,y)\fch_{\{|y|\le1\}})\,\nu(dy)
\]
(see, e.g., Sato (1999)). In the above formula  $a$ is a symmetric
nonnegative definite $d\times d$ matrix, $\gamma\in\BR^d$ and
$\nu$ is a Borel measure on $\BR^d$ such that $\nu(\{0\})=0$ and
$\int_{\BR^d}(1\wedge |x|^2)\,\nu(dx)<\infty$.

In this paper, we assume that, under the risk-neutral measure $P$ (generally non-unique), the prices $X^{s,x,1},\dots,X^{s,x,d}$ of
financial assets  on the time interval $[s,T]$ are modeled by
(\ref{eq1.1}) with $\xi$ being a L\'evy process under $P$. This
means that, in particular,  if $\delta_i=0$, $i=1,\dots,d$, then
under $P$ the discounted prices $t\mapsto
e^{-r(t-s)}X^{s,x,i}_t=e^{\xi^i_t-\xi^i_s}$, $i=1,\dots,d$, are
martingales under $P$. It is known (see, e.g., Reich et al. (2010, lemma
2.1)) that the last requirement is equivalent to the
following conditions on the triplet $(a,\nu,\gamma)$
\begin{equation}
\label{eq2.1} \int_{\{|y|>1\}}e^{y_i}\,\nu(dy)<\infty,\quad
i=1,\dots,d
\end{equation}
and
\begin{equation}
\label{eq2.2}
\gamma_i+\frac12a_{ii}+\int_{\BR^d}
(e^{y_i}-1-y_i\fch_{\{|y|<1\}})\nu(dy)=0,
\quad i=1,\dots,d.
\end{equation}
We will also assume that
\begin{equation}
\label{eq2.02} \det a>0.
\end{equation}

By  It\^o's formula, under the measure $P$ we have
\begin{align}
\label{eq2.3}
X^{s,x,i}_t-x_i&=\int^t_s(r-\delta_i)X^{s,x,i}_{\theta}\,d\theta
+\int^t_sX^{s,x,i}_{\theta-}\,d\xi^i_{\theta}
+\frac12\int^t_sX^{s,x,i}_{\theta}\,d[\xi^i]^c_{\theta}\\
&\quad+\sum_{s<\theta\le t}
\{X^{s,x,i}_{\theta-}(e^{\Delta\xi^i_{\theta}}-1)
-X^{s,x,i}_{\theta-}\Delta\xi^i_{\theta}\}.\nonumber
\end{align}
Let $J$ denote the Poisson random measure on
$\BR_+\times(\BR^d\setminus\{0\})$ with intensity $\nu$ and let
$\tilde J(dt,dy)=J(dt,dy)-dt\,\nu(dy)$. By the L\'evy-It\^o
decomposition (see, e.g., Protter (2004, theorem I.42) or Sato (1999,
section 19)), for $i=1,\dots,d$ we have
\[
\xi^i_t=\xi^i_s+\sum^d_{i,j=1}\int^t_s\sigma_{ij}\,dW^j_{\theta}
+\int^t_s\gamma_i\,d\theta+\int^t_s\!\int_{\{|y|<1\}}y_i\tilde
J(d\theta,dy)+\sum_{s<\theta\le t}
\Delta\xi^i_{\theta}\fch_{\{|\Delta\xi_{\theta}|\ge1\}},
\]
where $\sigma$ is a $d\times d$-matrix such that
$\sigma\sigma^*=a$ and  $(W^1,\dots,W^d)$ is a standard
$d$-dimensional Wiener process. Using this and (\ref{eq2.2}), one
can show by direct computation that,  from (\ref{eq2.3}), it follows
that $X^{s,x,i}$ is a solution of the equation
\begin{align}
\label{eq2.03}
X^{s,x,i}_t&=x_i+\int^t_s(r-\delta_i)X^{s,x,i}_{\theta}\,d\theta
+\sum^d_{j=1}\int^t_s\sigma_{ij}X^{s,x,i}_{\theta}\,dW^j_{\theta}
\\
&\quad+\int^t_s\!\!\int_{\BR^d}X^{s,x,i}_{\theta-}
(e^{y_i}-1)\tilde J(d\theta,dy)\nonumber\\
&=x_i+\int^t_s(r-\delta_i)X^{s,x,i}_{\theta}\,d\theta
+\int^t_sd(M^{c,i}_{\theta}+M^{d,i}_{\theta})\nonumber
\end{align}
with
\[
M^{c,i}_t=\int^t_s\sigma_{ij}X^{s,x,i}_{\theta}\,dW^j_{\theta},
\quad M^{d,i}_t=\int^t_s\!\!\int_{\BR^d}X^{s,x,i}_{\theta-}
(e^{y_i}-1)\tilde J(d\theta,dy),\quad t\ge s.
\]

Let $X^{s,x}=(X^{s,x,1},\dots,X^{s,x,d})$ be the process defined
by (\ref{eq1.1}) and let $P_{s,t}$ denote its transition function,
i.e., $P_{s,t}(x,B)=P(X^{s,x}_t\in B)$ for all $t>s$ and Borel
set $B\subset\BR^d$. Of course, $P_{s+h,t+h}(x,B)=P_{s,t}(x,B)$
for $h\ge0$.

In what follows  by  $\BX^s=((X_t)_{t\ge s}\, ,(\FF^s_t)_{s\ge t}\,,
(P_{s,x})_{x\in\BR^d})$ we denote a temporally homogeneous Markov
process with transition function $P_t(x,B)=P(X^{s,x}_t\in B)$,
$t>s$. With this notation, the law of $X^{s,x}$ under $P$ is the
same as $X$ under $P_{s,x}$. By $E_{s,x}$ we denote the
expectation with respect to $P_{s,x}$.

Let $I=\{0,1\}^{d}$. Set
\[
D_{\iota}=\{x\in\BR^d: (-1)^{i_{k}}x_{k}>0,k=1,\dots,d\}\mbox{ for
}\iota=(i_1,\dots,i_d)\in I,\quad D=\bigcup_{\iota\in I}
D_{\iota}.
\]

\begin{remark}
\label{rem2.1} (i) Let $x\in D_{\iota}$ for some $\iota\in I$.
Then, from (\ref{eq1.1}), it immediately follows that $P_{s,x}(X_t\in
D_{\iota},t\ge s)=1$ for every $s\ge0$,
\smallskip\\
(ii) If (\ref{eq2.02}) is satisfied,  then for all $t>0$ and
$x\in D_{\iota}$, the distribution of $X_t$ under $P_{0,x}$ is
absolutely continuous. Let $p(t,x,y)$ denote its density. Then,
$(0,T)\times D_{\iota}\times D_{\iota}\ni(t,x)\mapsto p(t,x,y)$ is
strictly positive and continuous. To see this, let us first note
that, by Sato (1999, theorem 19.2(iii)), for every $t>0$ and
$x\in\BR^d$, the distribution of the random variable $x+\xi_t$ is
equal  to the convolution of the Gaussian measure $\NN(x+\gamma
t,at)$ and the distribution $\mu_t$ of $\bar\xi_t$, where
$\bar\xi$ is a L\'evy process with the characteristic triplet
$(0,\nu,0)$. Therefore, the distribution of $x+\xi_t$ has density
of the form
\begin{equation}
\label{eq2.5} q(t,x,y)=\int_{\BR^d}g_x(t,y+z)\mu_t(dz),\quad
y\in\BR^d\, ,
\end{equation}
where $g_x(t,y)$ denotes the density of the measure  $\NN(x+\gamma
t,at)$. From (\ref{eq2.5}), it immediately follows that
$q(\cdot,x,\cdot)$ is strictly positive on $(0,T)\times\BR^d$.
Using (\ref{eq2.5}) and performing elementary calculations, one may
also show that $(0,T)\times\BR^d\ni(t,y)\mapsto q(t,x,y)$ is
continuous. The desired properties of $p$ now follow from
(\ref{eq1.1}).
\end{remark}

\section{Optimal stopping problem and reflected BSDEs}
\label{sec3}

In this paper, we assume that $\psi:\BR^d\rightarrow\BR_+$ is a
measurable function such that
\begin{equation} \label{eq3.1}
\psi(x)\le K(1+|x|^p),\quad x\in\BR^d
\end{equation}
for some $K\ge0$, $p\ge0$.
As for $\nu$, in this section, we assume that, for some
$\varepsilon>0$,
\begin{equation}
\label{eq2.06} \int_{\{|y|>1\}}e^{((1\vee p)+\varepsilon)y_i}
\,\nu(dy)<\infty, \quad i=1,\dots,d.
\end{equation}
By Sato (1999, theorem 25.3), the condition (\ref{eq2.06}) implies that
$E_{s,x}|X_T|^{(1\vee p)+\varepsilon}<\infty$ for every $(s,x)\in
Q_T=[0,T]\times\BR^d$. In particular, if $\psi,\nu$ satisfy
(\ref{eq3.1}), (\ref{eq2.06}), then $E_{s,x}\psi(X_T)<\infty$ for
$(s,x)\in Q_T$.

The value at time $t\in[s,T]$ of the American option with terminal
payoff $\psi(X_T)$  is given by
\begin{equation}
\label{eq2.08} V_t=\mbox{ess\,sup}_{\tau\in\TT_{t,T}}
E_{s,x}\big(e^{-r(\tau-s)}\psi(X_{\tau})|\FF^s_t\big),
\end{equation}
where $E_{s,x}$ denotes the expectation with respect to $P_{s,x}$
and $\TT_{s,T}$ is the set of all $(\FF^s_t)$-stopping times with
values in $[s,T]$. It is known that
\begin{equation}
\label{eq2.10} V_t=u(t,X_t),\quad t\in[s,T],\quad
P_{s,x}\mbox{-a.s.},
\end{equation}
where
\begin{equation}
\label{eq2.09} u(s,x)=\sup_{\tau\in\TT_{s,T}}
E_{s,x}e^{-r(\tau-s)}\psi(X_{\tau}) \quad (=V(s,x)).
\end{equation}

The optimal stopping problem (\ref{eq2.08}) is closely related to
the solution of some reflected backward stochastic differential equation (reflected BSDE). To state the relation, let
us first recall  that a triple $(Y^{s,x},M^{s,x},K^{s,x})$
consisting of a c\`adl\`ag $(\FF^s_t)$-adapted process $Y^{s,x}$
of class D, a c\`adl\`ag $((\FF^s_t),P_{s,x})$-local martingale
$M^{s,x}$ such that $M^{s,x}_s=0$ and a c\`adl\`ag
$(\FF^s_t)$-predictable increasing process $K^{s,x}$ such that
$K^{s,x}_s=0$ is a solution, on the filtered probability space
$(\Omega,(\FF^s_t),P_{s,x})$, of the reflected BSDE
\begin{equation}
\label{eq3.06}
Y^{s,x}_t=\psi(X_T)-\int^T_trY^{s,x}_{\theta}\,d\theta
+\int^T_tdK^{s,x}_{\theta} -\int^T_tdM^{s,x}_{\theta},\quad
t\in[s,T]
\end{equation}
with barrier $\psi(X)$ if
\[
Y^{s,x}_t\ge \psi(X_t),\quad t\in[s,T],\quad
\int^T_s(Y^{s,x}_{t-}-\psi(X_{t-}))\,dK^{s,x}_t=0,\quad
P_{s,x}\mbox{-a.s.}
\]
and (\ref{eq3.06}) is satisfied $P_{s,x}$. Let us observe that, if
the restriction $\psi_{|D}$ of $\psi$ to $D$ is continuous, then
by Remark \ref{rem2.1}(ii), the barier $\psi(X)$ is a c\`adl\`ag
process under $P_{s,x}$ for every $(s,x)\in[0,T)\times D$.


\begin{theorem}
\label{th3.1} Assume that $\psi$  satisfies
\mbox{\rm(\ref{eq3.1})} and  that $\psi_{|D}$ is continuous, $\nu$
satisfies \mbox{\rm(\ref{eq2.06})}, and let $(s,x)\in[0,T)\times
D$.
\begin{enumerate}
\item[\rm(i)]There exists a
unique solution $(Y^{s,x},M^{s,x},K^{s,x})$ of
\mbox{\rm(\ref{eq3.06})}. Moreover, $M^{s,x}$ is an
$((\FF^s_t),P_{s,x})$-uniformly integrable martingale, $K^{s,x}$
is continuous and $E_{s,x}K^{s,x}_T<\infty$.

\item[\rm(ii)]$V_t=Y^{s,x}_t$, $t\in[s,T]$, $P_{s,x}$-a.s. Hence,
if we define $u:[0,T]\times D\rightarrow\BR_+$ by
\mbox{\rm(\ref{eq2.09})}, then
\begin{equation}
\label{eq2.15} Y^{s,x}_t=u(t,X_t),\quad t\in[s,T],\quad
P_{s,x}\mbox{-a.s.}
\end{equation}
Moreover, $u$ is continuous.

\item[\rm(iii)]For every $s\in[0,T)$, there exists a continuous
process $K^s$ on $[s,T]$ such that $K^s$ is
$P_{s,x}$-indistinguishable from $K^{s,x}$ for every $x\in D$, and
for every $s\in[0,T)$, there exists a martingale  $M^s$ on $[s,T]$
such that $M^s$ is $P_{s,x}$-indistinguishable from $M^{s,x}$ for
every $x\in D$.
\end{enumerate}
\end{theorem}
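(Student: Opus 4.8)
The plan is to establish the three parts in the natural order, reducing the reflected BSDE with a non-Lipschitz, merely measurable coefficient to a setting covered by known results. First I would verify the standing integrability: since $\psi$ has polynomial growth \mbox{\rm(\ref{eq3.1})} and $\nu$ satisfies \mbox{\rm(\ref{eq2.06})}, Sato's moment theorem gives $E_{s,x}\sup_{s\le t\le T}|X_t|^{(1\vee p)+\varepsilon}<\infty$, hence the terminal condition $\psi(X_T)$ and the barrier $L_t:=\psi(X_t)$ are in the appropriate $L^q$-class (in fact $\sup_{t}|L_t|$ is integrable, so $L$ is of class D). Because $\psi_{|D}$ is continuous and $X$ stays in $D_\iota$ under $P_{s,x}$ (Remark \ref{rem2.1}(i)), $L$ is a c\`adl\`ag process of class D dominated by an integrable random variable. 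The coefficient here is simply $f(y)=-ry$, which is Lipschitz in $y$ and independent of the martingale part, so the general existence and uniqueness theory for reflected BSDEs with a c\`adl\`ag barrier of class D and an $L^1$ (or $L^q$) data — e.g.\ in the Markovian/quasi-left-continuous framework developed in Klimsiak and Rozkosz (2011) and companion papers — applies directly. This yields part (i): a unique solution $(Y^{s,x},M^{s,x},K^{s,x})$ with $M^{s,x}$ a uniformly integrable martingale, $K^{s,x}$ continuous (the continuity of $K$ coming from the fact that the filtration of $\xi$ is quasi-left-continuous and the obstacle jumps only at totally inaccessible times, so the Skorokhod reflection term picks up no predictable jumps), and $E_{s,x}K^{s,x}_T<\infty$.

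For part (ii), the identification $V_t=Y^{s,x}_t$ is the standard link between reflected BSDEs and optimal stopping (the El Karoui et al.\ representation, extended to the c\`adl\`ag-obstacle, $L^1$-data case): using the Snell-envelope characterization one shows that $Y^{s,x}_t=\mbox{ess\,sup}_{\tau\in\TT_{t,T}}E_{s,x}(e^{-r(\tau-t)}\psi(X_\tau)\mid\FF^s_t)\cdot e^{-r(t-s)}$ after absorbing the discount factor — more precisely, $e^{-r(t-s)}Y^{s,x}_t$ is the Snell envelope of the discounted reward $e^{-r(t-s)}\psi(X_t)$, since $f(y)=-ry$ exactly generates the discount semigroup. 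Comparing with \mbox{\rm(\ref{eq2.08})} gives $V_t=Y^{s,x}_t$. The Markov property of $\BX^s$ then yields \mbox{\rm(\ref{eq2.15})} with $u$ defined by \mbox{\rm(\ref{eq2.09})}: one writes $Y^{s,x}_t$ at a fixed $t$ as a function of $X_t$ via the strong Markov property and shows this function coincides with $u(t,\cdot)$ off a null set, then upgrades to all $t$ by c\`adl\`ag-path considerations. The continuity of $u$ on $[0,T]\times D$ is where I expect the real work: one shows $(s,x)\mapsto u(s,x)$ is continuous by combining (a) continuity in $x$ from the continuous dependence of $X^{s,x}$ on $x$ (in $L^q$, uniformly on $[s,T]$, which follows from the SDE \mbox{\rm(\ref{eq2.03})} and moment bounds) together with the continuity of $\psi_{|D}$ and a uniform-integrability/Fatou argument on the supremum over stopping times, and (b) continuity in $s$ using the Markov structure and a time-shift estimate for $X$. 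A clean route is to prove joint continuity of the map $(s,x)\mapsto(Y^{s,x},M^{s,x},K^{s,x})$ in an appropriate $\mathcal{S}^q\times\mathcal{M}^q\times\mathcal{S}^q$-type norm via the a priori estimates and comparison principle for reflected BSDEs, and then read off continuity of $u$ from \mbox{\rm(\ref{eq2.15})} evaluated at $t=s$.

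For part (iii), the point is to pass from the family of processes $K^{s,x},M^{s,x}$ indexed by $x$ (each defined $P_{s,x}$-a.s.) to single processes $K^s,M^s$ that work simultaneously. The standard device is to construct $K^s,M^s$ on the canonical path space as functionals of the trajectory of $X$ (for instance, $K^s_t$ as the increasing process in the Doob–Meyer decomposition of the Snell envelope $u(\cdot,X_\cdot)$, which is a path functional not referring to the starting point, and $M^s$ as the martingale part of $u(t,X_t)+\int_s^t rY_\theta\,d\theta-K^s_t$); because $u$ is continuous, $u(t,X_t)$ is a genuine c\`adl\`ag supermartingale under every $P_{s,x}$, so its Doob–Meyer decomposition (valid simultaneously under all $P_{s,x}$ by the uniqueness of the decomposition and a monotone-class argument) gives $K^s$ and $M^s$ as universal path functionals, and by the uniqueness in part (i) these must be $P_{s,x}$-indistinguishable from $K^{s,x},M^{s,x}$ for each $x\in D$.

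The main obstacle, as flagged above, is the continuity of $u$ on $[0,T]\times D$: the reward $\psi$ is only assumed continuous on $D$ (not globally) and has polynomial growth, so one must carry weighted $L^q$-estimates and uniform integrability carefully through the supremum over stopping times, and handle the behavior near the coordinate hyperplanes (the boundary of $D$) — which is exactly why the domain is restricted to $(s,x)\in[0,T)\times D$ and why Remark \ref{rem2.1}(i) confining $X$ to a single orthant $D_\iota$ is essential.
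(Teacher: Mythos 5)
The decisive gap is in your part (iii). You propose to obtain $K^s$ and $M^s$ as ``universal path functionals'' from the Doob--Meyer decomposition of the discounted Snell envelope $u(\cdot,X_\cdot)$, arguing that the decomposition is ``valid simultaneously under all $P_{s,x}$ by the uniqueness of the decomposition and a monotone-class argument.'' This does not work as stated: the Doob--Meyer compensator under $P_{s,x}$ is only defined up to $P_{s,x}$-null sets, and uniqueness of the decomposition is uniqueness \emph{within each fixed measure}. Nothing in that uniqueness (nor in a monotone-class argument) produces a single process, measurable with respect to the path, that is a version of the compensator under every $P_{s,x}$ simultaneously --- indeed, producing such a common version is precisely the content of part (iii), and it is where a real argument is required. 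The paper resolves this by approximation: it takes the penalized BSDEs, for which $K^{s,n}_t=n\int_s^t\bigl(u_n(\theta,X_\theta)-\psi(X_\theta)\bigr)^-\,d\theta$ \emph{is} an explicit functional of the path (because $u_n$ does not depend on the starting point), shows via Klimsiak (2015, Theorem 2.13) and the continuity of $K^{s,x}$ that $\sup_{s\le t\le T}|K^{s,n}_t-K^{s,x}_t|\to0$ in $P_{s,x}$-probability for every $x$, and then invokes Fukushima--Oshima--Takeda (Lemma A.3.4) to extract one continuous process $K^s$ indistinguishable from $K^{s,x}$ under each $P_{s,x}$; $M^s$ is then read off from the equation $u(t,X_t)=u(s,X_s)+r\int_s^t u(\theta,X_\theta)\,d\theta-K^s_t+M^s_t$. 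Your proposal needs this (or an equivalent additive-functional argument) to close part (iii).

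In parts (i) and (ii) your strategy coincides with the paper's (existence/uniqueness from the reflected-BSDE theory on quasi-left-continuous filtrations, identification of $Y^{s,x}$ with the Snell envelope after absorbing the discount factor, and continuity of $u$ via stability of the solutions in $(s,x)$), but the continuity argument is only gestured at, and the points you skip are exactly the hard ones: the data are only in $L^q$ with $q\in(1,1+\varepsilon/p)$ close to $1$ (not $L^2$), so one needs the $L^q$ It\^o--Tanaka-type estimate of Klimsiak (2015, Proposition 5.1) together with the Briand et al. device; the stability bound involves the reflection terms, which must be controlled through the Skorokhod minimality condition (the estimates for the integrals against $d\tilde K^{s_n,x_n}-d\tilde K^{s,x}$) and require a uniform bound $\sup_n E(\tilde K^{s_n,x_n}_T)^q<\infty$, which the paper obtains from the penalization scheme; and the convergence of terminal values and barriers uses a truncation/uniform-integrability argument resting on (\ref{eq3.1}) and (\ref{eq2.06}). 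As written, ``a priori estimates and comparison principle'' does not yet yield these ingredients, so part (ii) should be regarded as an outline of the paper's route rather than a proof.
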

\begin{proof}
As $E_{s,x}\psi(X_T)<\infty$ and the filtration $(\FF^s_t)$ is
quasi-left continuous (see Protter (2004, exercise III.9)), the
existence and uniqueness of a solution $(Y^{s,x},M^{s,x},K^{s,x})$
of (\ref{eq3.06}) such that $E_{s,x}K^{s,x}_T<\infty$ follows from
Corollary 2.2 and Theorem 2.13  in Klimsiak (2015). Moreover,
$M^{s,x}$ is uniformly integrable (see the remark following eq.
(2.28) in Klimsiak (2015)). Set
\[
\bar Y_t=e^{-r(t-s)}Y^{s,x}_t,\quad \bar
M_t=\int^t_se^{-r(\theta-s)}\,dM^{s,x}_{\theta},\quad \bar
K_t=\int^t_se^{-r(\theta-s)}\,dK^{s,x}_{\theta},\quad t\in[s,T].
\]
By integrating by parts,  one can check that the triple $(\bar Y,\bar
M,\bar K)$ is a solution of the reflected BSDE
\begin{equation}
\label{eq2.11} \bar Y_t=\bar\xi+\int^T_td\bar K_{\theta}
-\int^T_td\bar M_{\theta},\quad t\in[s,T],\quad
P_{s,x}\mbox{-a.s.}
\end{equation}
with $\bar\xi=e^{-r(T-s)}\psi(X_T)$ and barrier $\bar
L_t=e^{-r(t-s)}\psi(X_t)$, $t\in[s,T]$. Therefore, from
Klimsiak (2015, corollary 2.9) (with $f=0$, $V=0$, $\hat L=\bar L$),
it follows that
\[
e^{-r(t-s)}Y^{s,x}_t=\bar Y_t=\mbox{ess\,sup}_{\tau\in\TT_{s,T}}
E_{s,x}\big(e^{-r(\tau-s)}\psi(X_{\tau})|\FF^s_t\big),
\]
which implies  the first part of (ii). The second part of (ii) now
follows from (\ref{eq2.10}).
Now, we are going to show that $V$ defined by (\ref{eq1.2}) is
continuous and hence that $u$ is continuous. By Corollary 2.2 and
Theorem 2.13 in Klimisak (2015), there exists a unique solution
$(\tilde Y^{s,x},\tilde M^{s,x},\tilde K^{s,x})$ of the reflected
BSDE
\[
\tilde Y^{s,x}_t=\eta^{s,x}+\int^T_td\tilde K^{s,x}_{\theta}
-\int^T_td\tilde M^{s,x}_{\theta},\quad t\in[s,T],\quad
P\mbox{-a.s.}
\]
with terminal condition $\eta^{s,x}=e^{-r(T-s)}\psi(X^{s,x}_T)$
and barrier $L^{s,x}_t=e^{-r(t-s)}\psi(X^{s,x}_t)$, $t\in[s,T]$.
In what follows, we extend $X^{s,x}$ and $L^{s,x}$ to $[0,T]$ by
putting $X^{s,x}_t=x$ for $t\in[0,s]$. Suppose that $x\in
D_{\iota}$ for some $\iota\in I$. Fix $q\in(1,1+(\varepsilon/p))$
and consider sequences $\{s_n\}\subset[0,T]$, $\{x_n\}\subset
D_{\iota}$ such that $s_n\rightarrow s$, $x_n\rightarrow x$. By
Klimisak (2015, proposition 5.1),
\begin{align}
\label{eq2.14} &|\tilde Y^{s_n,x_n}_t-\tilde Y^{s,x}_t|^q \le
|\tilde Y^{s_n,x_n}_T-\tilde Y^{s,x}_T|^q \\
&\qquad+q\int^T_t |\tilde Y^{s_n,x_n}_{\theta-}-\tilde
Y^{s,x}_{\theta-}|^{q-1}\mbox{\rm sign}(\tilde
Y^{s_n,x_n}_{\theta-}-\tilde Y^{s,x}_{\theta-})\,d(\tilde
K^{s_n,x_n}_{\theta}-\tilde K^{s,x}_{\theta})\nonumber\\
&\qquad-q\int^T_t |\tilde Y^{s_n,x_n}_{\theta-}-\tilde
Y^{s,x}_{\theta-}|^{q-1} \mbox{\rm sign}(\tilde
Y^{s_n,x_n}_{\theta-}-\tilde Y^{s,x}_{\theta-})\,d(\tilde
M^{s_n,x_n}_{\theta}-\tilde M^{s,x}_{\theta}),\nonumber
\end{align}
where $\mbox{\rm sign}(x)=1$ if $x>0$ and  $\mbox{\rm sign}(x)=-1$
if $x\le0$. 
We have
\begin{align*}
I^n_t&:=\int^T_t\fch_{\{\tilde Y^{s_n,x_n}_{\theta-}>\tilde
Y^{s,x}_{\theta-}\}}|\tilde Y^{s_n,x_n}_{\theta-}-\tilde
Y^{s,x}_{\theta-}|^{q-1} \mbox{\rm sign}(\tilde
Y^{s_n,x_n}_{\theta-}-\tilde Y^{s,x}_{\theta-})\,d(\tilde
K^{s_n,x_n}_{\theta}-\tilde K^{s,x}_{\theta})\\
&\le \int^T_t\fch_{\{\tilde Y^{s_n,x_n}_{\theta-}>\tilde
Y^{s,x}_{\theta-}\}}\fch_{\{L^{s_n,x_n}_{\theta-}\le
L^{s,x}_{\theta-}\}} |\tilde Y^{s_n,x_n}_{\theta-}-\tilde
Y^{s,x}_{\theta-}|^{q-1} \frac{\tilde Y^{s_n,x_n}_{\theta-}-\tilde
Y^{s,x}_{\theta-}\wedge \tilde Y^{s_n,x_n}_{\theta-}} {|\tilde
Y^{s_n,x_n}_{\theta-}-\tilde Y^{s,x}_{\theta-}|}
\,d\tilde K^{s_n,x_n}_{\theta}\\
&\quad+\int^T_t\fch_{\{\tilde Y^{s_n,x_n}_{\theta-}>\tilde
Y^{s,x}_{\theta-}\}}\fch_{\{L^{s_n,x_n}_{\theta-}>L^{s,x}_{\theta-}\}}
|\tilde Y^{s_n,x_n}_{\theta-}-\tilde Y^{s,x}_{\theta-}|^{q-1}
\,d\tilde K^{s_n,x_n}_{\theta}=:I^{n,1}_t+I^{n,2}_t.
\end{align*}
Observe that
\[
I^{n,1}_t\le \int^T_t\fch_{\{\tilde Y^{s_n,x_n}_{\theta-}>\tilde
Y^{s,x}_{\theta-}\}} |\tilde Y^{s_n,x_n}_{\theta-}-\tilde
Y^{s,x}_{\theta-}|^{q-1} \frac{\tilde
Y^{s_n,x_n}_{\theta-}-L^{s_n,x_n}_{\theta-}} {|\tilde
Y^{s_n,x_n}_{\theta-}-\tilde Y^{s,x}_{\theta-}|} \,d\tilde
K^{s_n,x_n}_{\theta}=0.
\]
As $0\le\tilde  Y^{s_n,x_n}_{t-}-\tilde Y^{s,x}_{t-}\le \tilde
Y^{s_n,x_n}_{t-}-L^{s_n,x_n}_{t-}+L^{s_n,x_n}_{t-}-L^{s,x}_{t-}$
if $\tilde Y^{s_n,x_n}_{t-}>\tilde Y^{s,x}_{t-}$, we have
\begin{align*}
I^{n,2}_t&\le 2^{q-1} \int^T_t\fch_{\{\tilde
Y^{s_n,x_n}_{\theta-}>\tilde Y^{s,x}_{\theta-}\}}
\fch_{\{L^{s_n,x_n}_{\theta-}>L^{s,x}_{\theta-}\}} \\
&\qquad\qquad\qquad\qquad\quad\times((\tilde
Y^{s_n,x_n}_{\theta-}-L^{s_n,x_n}_{\theta-})^{q-1}
+(L^{s_n,x_n}_{\theta-}-L^{s,x}_{\theta-})^{q-1})
\,d\tilde K^{s_n,x_n}_{\theta}\\
&\le 2^{q-1} \int^T_t\fch_{\{\tilde Y^{s_n,x_n}_{\theta-}>\tilde
Y^{s,x}_{\theta-}\}} (\tilde
Y^{s_n,x_n}_{\theta-}-L^{s_n,x_n}_{\theta-})^{q-2} (\tilde
Y^{s_n,x_n}_{\theta-}-L^{s_n,x_n}_{\theta-})\,d\tilde K^{s_n,x_n}_{\theta}\\
&\quad+2^{q-1}\sup_{0\le t\le
T}|L^{s_n,x_n}_t-L^{s,x}_t|^{q-1}\tilde K^{s_n,x_n}_T.
\end{align*}
Because the first integral on the right-hand side of the above
inequality is  equal to zero, combining the estimates for
$I^{n,1}$ and $I^{n,2}$ yields
\begin{equation}
\label{eq2.18} I^n_t\le 2^{q-1}\sup_{0\le t\le
T}|L^{s_n,x_n}_t-L^{s,x}_t|^{q-1}K^{s_n,x_n}_T.
\end{equation}
In much the same manner as above, one can prove that
\begin{align}
\label{eq2.19} &\int^T_t\fch_{\{\tilde
Y^{s_n,x_n}_{\theta-}\le\tilde Y^{s,x}_{\theta-}\}}|\tilde
Y^{s_n,x_n}_{\theta}-\tilde Y^{s,x}_{\theta}|^{q-1} \mbox{\rm
sign}(\tilde Y^{s_n,x_n}_{\theta-}-\tilde
Y^{s,x}_{\theta-})\,d(\tilde
K^{s_n,x_n}_{\theta}-\tilde K^{s,x}_{\theta})\\
&\qquad\le 2^{q-1}\sup_{0\le t\le
T}|L^{s_n,x_n}_t-L^{s,x}_t|^{q-1}K^{s,x}_T. \nonumber
\end{align}
Let  $(\tilde\FF^s_t)$ denote the usual augmentation of the
filtration generated by $X^{s,x}$. By (\ref{eq2.14}),
(\ref{eq2.18}) and (\ref{eq2.19}),
\begin{align*}
|\tilde Y^{s_n,x_n}_t-\tilde Y^{s,x}_t|^q &=E(|\tilde
Y^{s_n,x_n}_t-\tilde Y^{s,x}_t|^q|\tilde\FF^s_t) \\
&\le E\big(|\eta^{s_n,x_n}-\eta^{s,x}|^q+2^q\sup_{0\le t\le
T}|L^{s_n,x_n}_t-L^{s,x}_t|^{q-1}(\tilde K^{s_n,x_n}_T+\tilde
K^{s,x}_T) |\tilde\FF^s_t\big).
\end{align*}
By the above inequality and Briand et al. (2003, lemma 6.1),
\begin{align}
\label{eq2.24} &E(\sup_{0\le t\le T}|\tilde Y^{s_n,x_n}_t-\tilde
Y^{s,x}_t|^{q/2})\\
&\quad \le 2 \big(E|\eta^{s_n,x_n}-\eta^{s,x}|^q  + 2^qE\sup_{0\le
t\le T} |L^{s_n,x_n}_t-L^{s,x}_t|^{q-1}
(\tilde K^{s_n,x_n}_T+\tilde K^{s,x}_T)\big)^{1/2}\nonumber\\
&\quad \le 2 \big(E|\eta^{s_n,x_n}-\eta^{s,x}|^q \nonumber\\
&\qquad\qquad+2^q(E\sup_{0\le t\le T}
|L^{s_n,x_n}_t-L^{s,x}_t|^{q})^{(q-1)/q} (E(\tilde
K^{s_n,x_n}_T+\tilde K^{s,x}_T)^q)^{1/q}\big)^{1/2}.\nonumber
\end{align}
Our next claim is that
\begin{equation}
\label{eq2.21} \lim_{n\rightarrow\infty}E\sup_{0\le t\le T}
|X^{s_n,x_n}_t-X^{s,x}_t|^q=0
\end{equation}
and
\begin{equation}
\label{eq2.20} \sup_{n\ge1}E(K^{s_n,x_n}_T+K^{s,x}_T)^q<\infty.
\end{equation}
To prove (\ref{eq2.21}), let us first observe that for every
$x,y\in\BR^d$ and $i=1,\dots,d$,
\begin{align}
\label{eq2.22} E\sup_{0\le t\le T}|X^{s,x,i}_t-X^{s,y,i}_t|^q \le
C|x^i-y^i|^q
\end{align}
for some $C>0$ depending only on $T,r,\delta,q$ and $\nu$. Indeed,
$|X^{s,x,i}_t-X^{s,y,i}_t|^q=0$ for $t\in[0,s]$. Furthermore,
because $[s,T]\ni t\mapsto e^{\xi^i_t-\xi^i_s}$ is a martingale
under $P$, it follows from (\ref{eq1.1}) and Doob's inequality
that
\begin{align*}
E\sup_{s\le t\le T}|X^{s,x,i}_t-X^{s,y,i}_t|^q
&\le|x^i-y^i|^qe^{q|r-\delta_i|(T-s)} E|\sup_{s\le t\le T}
e^{\xi^i_t-\xi^i_s}|^q\nonumber\\
&\le|x^i-y^i|^q (\frac{q}{q-1})^qe^{q|r-\delta_i|(T-s)}
Ee^{q(\xi^i_T-\xi^i_s)},
\end{align*}
which when combined with  (\ref{eq2.06}) and Sato (1999, theorem 25.3) yields
(\ref{eq2.22}). Furthermore,
\begin{equation}
\label{eq2.23} \lim_{h\rightarrow0}E\sup_{0\le s\le T}
|X^{s+h,x}_t-X^{s,x}_t|^q=0.
\end{equation}
Indeed, if $h\ge0$ and $s+h\le T$, then
\begin{align*}
&\sup_{s+h\le t\le T}|X^{s+h,x,i}_t-X^{s,x,i}_t|^q \\
&\qquad =|x^i|^q \sup_{s+h\le t\le T}
e^{q(r-\delta_i)(t-s)+q(\xi^i_t-\xi^i_s)}
|e^{-(r-\delta_i)h+\xi^i_s-\xi^i_{s+h}}-1|^q\\
&\qquad\le|x^i|^q
e^{q|r-\delta_i|(T-s)}|e^{-(r-\delta_i)h+\xi^i_s-\xi^i_{s+h}}-1|^q
\sup_{s+h\le t\le T}| e^{\xi^i_t-\xi^i_s}|^q.
\end{align*}
By Doob's inequality and (\ref{eq2.06}), $E\sup_{s\le t\le T}|
e^{\xi^i_t-\xi^i_s}|^q<\infty$. From this and the fact that
$|e^{-(r-\delta_i)h+\xi^i_s-\xi^i_{s+h}}-1|^q\rightarrow0$ in
probability $P$ as $h\rightarrow0$, it follows that the right-hand
side of the above inequality converges in probability $P$ to zero
as $h\rightarrow0$. A similar argument shows that $\sup_{s\le t\le
s+h}|X^{s+h,x,i}_t-X^{s,x,i}_t|^q\rightarrow$ in probability $P$,
and hence that
\begin{equation}
\label{eq2.25} I^{s,h}:=\sup_{0\le t\le T}
|X^{s+h,x,i}_t-X^{s,x,i}_t|^q\rightarrow0\quad\mbox{in probability
}P
\end{equation}
as $h\rightarrow0^+$. In the same manner, we can see that
(\ref{eq2.25}) holds true if $h\rightarrow0^{-}$. Using Doob's
inequality and (\ref{eq2.06}), one can also show that for each
fixed $s\in[0,T]$, $\sup_{h}E|I^{s,h}|^{\alpha}<\infty$ for some
$\alpha>0$, so for each $s\in[0,T]$, the family $\{I^{s,h}\}$ is
uniformly integrable. This and (\ref{eq2.25}) imply
(\ref{eq2.23}). Combining (\ref{eq2.22}), (\ref{eq2.23}) with the
inequality
\[
\sup_{0\le t\le T} |X^{s_n,x_n}_t-X^{s,x}_t|^q \le2^{q-1}
(\sup_{0\le t\le T} |X^{s_n,x_n}_t-X^{s_n,x}_t|^q +\sup_{0\le t\le
T} |X^{s_n,x}_t-X^{s,x}_t|^q)
\]
we get (\ref{eq2.21}). By Klimsiak (2015, proposition 5.4), for every
$(s,x)\in[0,T)\times\BR^d$ and $k\ge0$, there exists a unique
solution $(\tilde Y^k,\tilde M^k)$ of the BSDE
\[
\tilde Y^k_t=\eta^{s,x}+k\int^T_t(\tilde
Y^k_{\theta}-L^{s,x}_{\theta})^{-}\,d\theta -\int^T_td\tilde
M^k_{\theta},\quad t\in[s,T].
\]
Moreover, from the  proof of Proposition 5.4 in Klimsiak (2015), it
follows  that there is $C$ not depending on $k,s,x$ such that
\[
E\sup_{s\le t\le T}|\tilde Y^{k}_t|^q\le CE\Big(|\eta^{s,x}|^q
+(\int^T_s|\psi(X^{s,x}_{\theta})|\,d\theta)^q\Big).
\]
As $\psi$ satisfies  (\ref{eq3.1}), it follows from the above
estimate, Doob's inequality and (\ref{eq2.06}) that $E\sup_{s\le
t\le T}|\tilde Y^{k}_t|^q\le C_1E\sup_{s\le t\le
T}|X^{s,x}_t|^{qp}\le C_2$ for some constants $C_1,C_2$ not
depending on $k,s,x$. Because by Klimsiak (2015, theorem 2.13), $\tilde
Y^0_t\le\tilde Y^k_t\nearrow\tilde Y^{s,x}_t$, $t\in[s,T]$, as
$k\nearrow\infty$, applying Fatou's gives $E\sup_{s\le t\le
T}|\tilde Y^{s,x}_t|^q\le C_2$. By this and Klimisak (2015, lemma
5.6), $E|K^{s,x}_T|^q\le C_3$ for some $C_3$ not depending
on $s,x$, which proves (\ref{eq2.20}). Observe now that
\begin{equation}
\label{eq2.26}
\lim_{n\rightarrow\infty}E|\eta^{s_n,x_n}-\eta^{s,x}|^q=0,\qquad
\lim_{n\rightarrow\infty}E\sup_{0\le t\le T}
|L^{s_n,x_n}_t-L^{s,x}_t|^{q}=0.
\end{equation}
To see this, for $R>0$ set $A_{n,R}=\{\sup_{s\le t\le
T}(|X^{s_n,x_n}_t|+|X^{s,x}_t|)\le R\}$. As $\psi$ is
continuous, it follows from (\ref{eq2.21}) that
\begin{equation}
\label{eq2.27} \lim_{n\rightarrow\infty}E(\fch_{A_{n,R}}\sup_{0\le
t\le T}|L^{s_n,x_n}_t-L^{s,x}_t|^{q})=0.
\end{equation}
From (\ref{eq2.21}) it also follows that
$\sup_{n\ge1}P(A^c_{n,R})\rightarrow0$ as $R\rightarrow\infty$.
Hence
\begin{equation}
\label{eq2.28} \lim_{R\rightarrow\infty}\sup_{n\ge1}
E(\fch_{A^c_{n,R}}\sup_{0\le t\le T}|L^{s_n,x_n}_t-L^{s,x}_t|^{q})
=0,
\end{equation}
because by (\ref{eq3.1}) and (\ref{eq2.06}),
$\sup_{n\ge1}E\sup_{0\le t\le
T}|L^{s_n,x_n}_t-L^{s,x}_t|^{q_1}<\infty$ for some $q_1>q$. From
(\ref{eq2.27}), (\ref{eq2.28}), we get the second convergence in
(\ref{eq2.26}). In much the same manner, we prove the first
convergence. Combining (\ref{eq2.24}) with (\ref{eq2.20}) and
(\ref{eq2.26}), we see that $E(\sup_{0\le t\le T}|\tilde
Y^{s_n,x_n}_t-\tilde Y^{s,x}_t|^{q/2})\rightarrow0$. We may now
repeat the argument from the beginning of the proof of El Karoui at al. (1997, lemma 8.4) to conclude that $\tilde
Y^{s_n,x_n}_{s_n}\rightarrow\tilde Y^{s,x}_s$. This proves
the continuity of $V$, because  $V(s_n,x_n)=\tilde Y^{s_n,x_n}_{s_n}$
and $V(s,x)=\tilde Y^{s,x}_{s}$. By (\ref{eq3.06}) and
(\ref{eq2.15}),
\begin{equation}
\label{eq2.16}
u(t,X_t)=u(s,X_s)+r\int^t_su(\theta,X_{\theta})\,d\theta
-K^{s,x}_t+M^{s,x}_t,\quad t\in[s,T],\quad P_{s,x}\mbox{-a.s.}
\end{equation}
Because the filtration $(\FF^s_t)$ is quasi-left continuous, the
jump times of the martingale $M^{s,x}$ are all totally
inaccessible (see, e.g., Protter (2004, p. 192)). As the jump times
of the L\'evy process $X$ are also totally inaccessible, it
follows from (\ref{eq2.16}) that the jumps of $K^{s,x}$ can occur
only at totally inaccessible stopping times. Therefore, $K^{s,x}$
is continuous because we know that $K^{s,x}$ is an increasing
predictable process of integrable variation (see, e.g., Corollary to Theorem III.25 in Protter (2004)). For $n\in\BN$, let $(Y^n,M^n)$
denote a solution of the BSDE
\begin{equation}
\label{eq3.36} Y^n_t=\psi(X_T)-\int^T_trY^n_{\theta}\,d\theta
+\int^T_tdK^{s,n}_{\theta}-\int^T_tdM^n_{\theta},\quad t\in[s,T],
\end{equation}
where
$K^{s,n}_t=n\int^t_s(Y^n_{\theta}-\psi(X_{\theta}))^-\,d\theta$,
$t\in[s,T]$. One can show (see, e.g., the proof of Theorem
4.7 in Klimsiak and Rozkosz (2013)) that $Y^n_t=u_n(t,X_t)$, $t\in[s,T]$, where
$u_n(s,x)=Y^n_s$. As $K^{s,x}$ is continuous, it follows from Klimisak (2015, theorem 2.13) that $\sup_{s\le t\le
T}|K^{s,n}_t-K^{s,x}_t|\rightarrow0$ in probability $P_{s,x}$ as
$n\rightarrow\infty$. From this and Fukushima at al. (2011, lemma A.3.4), we
deduce that there is a continuous process $K^s$ on $[s,T]$ such
that $K^s$ is $P_{s,x}$-indistinguishable from $K^{s}$ for every
$x\in\BR^d$. Consequently, by (\ref{eq2.16}), there is a
martingale $M^s$ on $[s,T]$ such that $M^s$ is
$P_{s,x}$-indistinguishable from $M^{s,x}$ for every $x\in\BR^d$.
\end{proof}

\begin{remark}
If $\psi$  is continuous on $\BR^d$ and satisfies (\ref{eq3.1}),
then in the formulation of Theorem \ref{th3.1}, we may replace $D$
by $\BR^d$.
\end{remark}

The following lemma will be needed in the proof of our main result
in Section \ref{sec5}.

\begin{lemma}
\label{lem3.3} For $s\in[0,T)$, let $K^s$ be the process of Theorem
\ref{th3.1}. There exists a unique positive Radon measure $\mu$ on
$(0,T)\times D$ such that, for all $s\in[0,T)$ and $x\in D$,
\begin{equation}
\label{eq3.35} E_{s,x}\int^T_sf(t,X_t)\,dK^s_t
=\int^T_s\!\!\int_{D}f(t,y)p(t-s,x,y)\,d\mu(t,y)
\end{equation}
for every continuous $f:(s,T)\times D\rightarrow\BR$ with compact
support.
\end{lemma}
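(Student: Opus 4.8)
The statement asserts the existence of a space--time Revuz-type measure for the additive functional $K^s$, together with the representation (\ref{eq3.35}) in terms of the transition density $p$. The natural strategy is to build $\mu$ first on a single fiber, say from the process $K^0$ started at time $0$, and then check consistency across all starting times $s$ by exploiting the temporal homogeneity of $\BX^s$ and the already-established indistinguishability of $K^s$ from $K^{s,x}$ (Theorem \ref{th3.1}(iii)). So the first step is: fix $s=0$ and define, for every nonnegative continuous $f$ on $(0,T)\times D$ with compact support, the functional
\[
\Lambda(f)=\int_{D}E_{0,x_0}\!\int_0^T f(t,X_t)\,dK^0_t\;m(dx_0)
\]
for a suitable fixed reference measure $m$ (e.g. Lebesgue measure restricted to a bounded piece of $D$, or one of the $D_\iota$); more robustly, one works fiberwise and defines, for fixed $x\in D$,
\[
\Lambda_x(f)=E_{0,x}\!\int_0^T f(t,X_t)\,dK^0_t .
\]
This is a positive linear functional on $C_c((0,T)\times D)$, hence by the Riesz representation theorem it is given by a positive Radon measure $\mu_x$ on $(0,T)\times D$. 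The content of the lemma is that $\mu_x(dy,dt)=p(t,x,y)\,d\mu(t,y)$ for a measure $\mu$ that does not depend on $x$ — i.e. the $x$-dependence enters only through the transition density.

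**Identifying the density-free part.** To extract $\mu$ I would use the Markov property of $\BX^s$ together with the smoothing given by the density $p$. Concretely, for $f$ supported in $(\delta,T)\times D$ with $\delta>0$, condition on $\FF^0_\delta$: by the Markov property and Theorem \ref{th3.1}(iii) (the process $K^\delta$ is, restricted to $[\delta,T]$, the ``continuation'' of $K^0$, and $K^{\delta,x'}$ depends on $x'$ only through the law of the shifted process), one gets
\[
E_{0,x}\!\int_\delta^T f(t,X_t)\,dK^0_t
=E_{0,x}\Big[\,h\big(\delta,X_\delta\big)\Big]
=\int_D p(\delta,x,x')\,h(\delta,x')\,dx',
\]
where $h(\delta,x')=E_{\delta,x'}\int_\delta^T f(t,X_t)\,dK^\delta_t$. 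Writing this identity for two different starting points $x_1,x_2$ and using strict positivity and continuity of $p$ (Remark \ref{rem2.1}(ii)), one sees that the ``mass'' assigned by $\Lambda_{x}$ is obtained by integrating $h(\delta,\cdot)$ against $p(\delta,x,\cdot)$; letting $\delta\downarrow 0$ along the support of $f$ and using the Chapman--Kolmogorov identity $p(t,x,y)=\int_D p(\delta,x,x')p(t-\delta,x',y)\,dx'$, one identifies
\[
E_{0,x}\!\int_0^T f(t,X_t)\,dK^0_t=\int_0^T\!\!\int_D f(t,y)\,p(t,x,y)\,d\mu(t,y),
\]
with $\mu$ characterized, on $(\delta,T)\times D$, by $d\mu(t,y)=$ the limit of $h(t-\varepsilon,y)$-type quantities — equivalently, $\mu$ is the Revuz measure of the PCAF $K$ with respect to the (time-inhomogeneous) semigroup $(P_{s,t})$. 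Uniqueness of $\mu$ then follows from uniqueness in the Riesz representation and from strict positivity of $p$, which makes the map $\mu\mapsto(\,(t,y)\mapsto\int p(t,x,y)\,d\mu(t,y)\,)$ injective on Radon measures.

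**Consistency across $s$ and finiteness.** Once $\mu$ is constructed from $K^0$, the identity (\ref{eq3.35}) for general $s\in[0,T)$ follows from temporal homogeneity: under $P_{s,x}$ the pair $(X_{s+\cdot},K^s_{s+\cdot})$ has the same law as $(X_\cdot,K^0_\cdot)$ under $P_{0,x}$ (this is exactly what Theorem \ref{th3.1} gives, since both solve the same reflected BSDE driven by the homogeneous Markov process), and $p(t-s,x,y)$ is precisely the shifted density. A change of variables $t\mapsto t-s$ in the integral then turns the $s=0$ identity into the $s$-identity; here one must also check that the measure $\mu$ produced from $K^0$ restricted to $(s,T)\times D$ coincides with the one produced from $K^s$, which again is immediate from the indistinguishability statement in Theorem \ref{th3.1}(iii) and homogeneity. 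The local finiteness of $\mu$ (needed to call it a Radon measure) is not automatic near $t=0$, but this is not required: $\mu$ need only be Radon on the open set $(0,T)\times D$, and for $f$ with compact support in $(0,T)\times D$ the bound $E_{s,x}K^s_T<\infty$ from Theorem \ref{th3.1}(i) together with strict positivity of $p$ (bounded below on compacts) gives $\mu(\mathrm{supp}\,f)<\infty$.

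**Main obstacle.** The delicate point is the passage from the conditional-expectation identity to the clean density representation — i.e. showing that the $x$-dependence of $\Lambda_x$ factors exactly through $p(t,x,y)$ rather than through some more complicated kernel. This is where one must invoke, in an essential way, that $K^s$ is genuinely a continuous additive functional of the Markov process $\BX^s$ (not merely an adapted increasing process): this is guaranteed by part (iii) of Theorem \ref{th3.1}, which says $K^s$ is a single process indistinguishable from $K^{s,x}$ for all $x$ simultaneously. Given that, the argument is the standard construction of a Revuz measure for a PCAF of a nice Markov process — cf. Fukushima et al.\ (2011) — adapted to the space--time (time-inhomogeneous) setting via the usual device of viewing $(t,X_t)$ as a homogeneous Markov process on $Q_T$; the continuity and strict positivity of $p$ from Remark \ref{rem2.1}(ii) are exactly what make the Revuz correspondence a genuine bijection here, yielding both existence and uniqueness of $\mu$.
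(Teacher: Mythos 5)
Your plan correctly isolates the shape of the statement (a Revuz-type space--time measure for $K$, with uniqueness coming from strict positivity of $p$), but the heart of the lemma --- the actual construction of $\mu$ --- is not carried out, and the route you sketch is circular. Conditioning on $\FF^0_\delta$ gives
$E_{0,x}\int_\delta^T f(t,X_t)\,dK^0_t=\int_D p(\delta,x,x')\,h_f(\delta,x')\,dx'$ with $h_f(\delta,x')=E_{\delta,x'}\int_\delta^T f(t,X_t)\,dK^\delta_t$; this factors the $x$-dependence through $p(\delta,x,\cdot)$, but to pass from it to $\int\!\!\int f(t,y)p(t,x,y)\,d\mu(t,y)$ you need to know that $h_f(\delta,\cdot)$ itself has the density representation $\int\!\!\int f(t,y)p(t-\delta,\cdot,y)\,d\mu(t,y)$ --- which is precisely the lemma with starting time $\delta$. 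The proposed way out, ``the standard Revuz construction, cf.\ Fukushima et al.,'' does not close this: that correspondence is developed for symmetric Dirichlet forms, whereas here the relevant (time-space) process is non-symmetric and the functional $K$ is tied to the fixed maturity $T$, so existence of the defining limit, its local finiteness, and its independence of $x$ would all have to be proved from scratch; your ``limit of $h(t-\varepsilon,y)$-type quantities'' is never shown to exist. In addition, the homogeneity identification used for consistency across $s$ --- that $(X_{s+\cdot},K^s_{s+\cdot})$ under $P_{s,x}$ has the law of $(X_\cdot,K^0_\cdot)$ under $P_{0,x}$ --- is false: time-shifting maps the reflected BSDE on $[s,T]$ to one with horizon $T-s$, not $T$, and since the reflection depends on time to maturity, $K^s_{s+\cdot}$ is not a copy of $K^0$ (nor does Theorem \ref{th3.1}(iii), which only removes the $x$-dependence of the version, give such an identification). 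The correct link between $K^0$ and $K^s$ is the flow property at intermediate times, which you use in the first step but abandon here.

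For comparison, the paper sidesteps the Revuz machinery entirely by using the penalization scheme from the proof of Theorem \ref{th3.1}: for $K^{s,n}_t=n\int_s^t(u_n(\theta,X_\theta)-\psi(X_\theta))^-\,d\theta$ the identity (\ref{eq3.35}) holds trivially with the explicit measure $\mu_n=n(u_n(t,y)-\psi(y))^-\,dt\,dy$, which does not depend on $(s,x)$; the bound $E_{0,x}K^{0,n}_T\le e^{rT}u_n(0,x)\le e^{rT}u(0,x)$ combined with the strict positivity and continuity of $p(\cdot,x,\cdot)$ (Remark \ref{rem2.1}) gives $\sup_n\mu_n(C)<\infty$ for every compact $C\subset(0,T)\times D$, hence a subsequence converges locally weakly${}^{*}$ to a Radon measure $\mu$; and the convergence $K^{s,n}\rightarrow K^s$ together with $E_{s,x}K^{s,n}_T\rightarrow E_{s,x}K^s_T$ (Klimsiak 2015, Theorem 2.13) passes the left-hand side to the limit for every $(s,x)$ simultaneously, making the $(s,x)$-independence of $\mu$ transparent. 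Your uniqueness and local-finiteness remarks are fine once the representation is in hand, but without a concrete construction of $\mu$ the existence part of the lemma remains unproved.
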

\begin{proof}
We first prove the existence of $\mu$. Suppose that $x\in
D_{\iota}$ for some $\iota\in I$. Let $u_n, K^n$ be defined as in
the proof of Theorem \ref{th3.1}. Then, for every $f\in
C_c((0,T)\times\BR^d)$,
\begin{equation}
\label{eq3.37} E_{s,x}\int^T_sf(t,X_t)\,dK^{s,n}_t
=\int^T_s\!\!\int_{D}f(t,y)p(t-s,x,y)\,d\mu_n(t,y),
\end{equation}
where $\mu_n=n(u_n(t,y)-\psi(y))^{-}\,dt\,dy$. As $f$ is
bounded and  by Klimsiak (2015, theorem 2.13), $\sup_{s\le t\le T}
|K^{s,n}_t-K^s_t|\rightarrow0$ in probability $P_{s,x}$ and
$E_{s,x}K^{s,n}_{T}\rightarrow E_{s,x}K^s_T$ as
$n\rightarrow\infty$,
$\int^T_sf(t,X_t)\,dK^{s,n}_t\rightarrow\int^T_sf(t,X_t)\,dK^s_t$
in probability $P_{s,x}$ and  $\{\int^T_sf(t,X_t)\,dK^{s,n}_t\}$
is uniformly integrable with respect to $P_{s,x}$. Hence
\begin{equation}
\label{eq3.38} E_{s,x}\int^T_sf(t,X_t)\,dK^{s,n}_t\rightarrow
E_{s,x}\int^T_sf(t,X_t)\,dK^s_t.
\end{equation}
By (\ref{eq3.36}),
\[
e^{-rT}\psi(X_T)-Y^n_0=-\int^T_0e^{-rt}\,dK^{0,n}_t
+\int^T_0e^{-rt}\,dM^n_t.
\]
From this and the fact that $\psi\ge0$ and $Y^n_0=u_n(0,x)$, it
follows that
\[
E_{0,x}K^{0,n}_T\le e^{rT}E_{0,x}\int^T_0e^{-rt}\,dK^{0,n}_t \le
e^{rT}u_n(0,x).
\]
Let $\nu_n=n(u_n(t,y)-\psi(y))^{-}\,p(t,x,y)\,dt\,dy$. Because by
Klimsiak (2015, theorem 2.13), $u_n(0,x)=Y^n_0\le Y^{0,x}_0= u(0,x)$
with $u,Y^{0,x}$ of Theorem \ref{th3.1}, the above inequality
shows that
\[
\sup_{n\ge1}\nu_n((0,T)\times D_{\iota})=\sup_{n\ge1}
E_{0,x}K^{0,n}_T<\infty.
\]
Let $\bar\mu_n$ denote the restriction of $\mu_n$ to $(0,T)\times
D$. Because by Remark \ref{rem2.1}, the function $(0,T)\times
D_{\iota}\ni(t,y)\mapsto p(t,x,y)$ is strictly positive and
continuous, it follows from the above that for every compact set
$K\subset(0,T)\times D_{\iota}$,
$\sup_{n\ge1}\bar\mu_n(K)<\infty$. Because this estimate holds true
for each $\iota\in I$, we in fact have
$\sup_{n\ge1}\bar\mu_n(K)<\infty$ for every compact subset
$K\subset(0,T)\times D$. Therefore there is a subsequence, still
denoted by $n$, such that $\{\bar\mu_n\}$ converges locally
weakly${}^*$ to some positive Radon measure $\mu$ on $(0,T)\times
D$. Consequently,
\begin{equation}
\label{eq3.39}
\int^T_s\!\!\int_{D}f(t,y)p(t-s,x,y)\,d\bar\mu_n(t,y)\rightarrow
\int^T_s\!\!\int_{D}f(t,y)p(t-s,x,y)\,d\mu(t,y).
\end{equation}
Combining (\ref{eq3.37})--(\ref{eq3.39}) proves (\ref{eq3.35}).
Uniqueness of $\mu$ follows easily from the fact that
$p(\cdot,x,\cdot)$ is strictly positive on $(0,T)\times D_{\iota}$
for each $\iota\in I$.
\end{proof}
\medskip

Note that, from Lemma \ref{lem3.3}, it follows in particular that,
for every $x\in D$,
\begin{equation}
\label{eq3.25}
E_{s,x}K^s_T=\int^T_s\!\!\int_{D}p(t-s,x,y)\,d\mu(t,y).
\end{equation}
To see this it suffices to approximate the function
$\fch_{(s,T)\times D}$ by an increasing sequence of positive
continuous functions with compact support and use  monotone
convergence.

\section{Cauchy problem}
\label{sec4}

Let $C_0(\BR^d)$ denote the set of continuous functions on $\BR^d$
vanishing at infinity and let $L$ denote the infinitesimal
generator of the semigroup on $C_0(\BR^d)$ induced by the  process
$X^{s,x}$, i.e.,
\begin{equation}
\label{eq4.1} Lf(x)=L_{BS}f(x)+L_If(x)
\end{equation}
for $f\in C^2_0(\BR^d)$, where
\[
L_{BS}f(x)
=\frac12\sum^d_{i,j=1}a_{ij}x_ix_j\partial^2_{x_ix_j}f(x)
+\sum^d_{i=1}(r-\delta_i)x_i\partial_{x_i}f(x)
\]
and
\[
L_If(x)=\int_{\BR^d}\Big(f(xe^y)-f(x)-
\sum_{i=1}^dx_i(e^{y_i}-1)\partial_{x_i}f(x)\Big)\,\nu(dy)
\]
with the convention that
\begin{equation}
\label{eq4.2} f(xe^y)=f(x_1e^{y_1},\dots, x_de^{y_d}),\quad
x=(x_1,\dots,x_d),\,y=(y_1,\dots,y_d)\in\BR^d.
\end{equation}

We have mentioned in the introduction that, in the present paper, we
reduce the problem of regularity of the value function $V$ to the
problem of regularity of the solution of the Cauchy problem
\begin{equation}
\label{eq3.9}
\partial_sv+L v=rv-g,\qquad v(T)=\psi,
\end{equation}
where $g\in L^2(0,T;L^2_{\varrho})$ with some suitably chosen
weight $\varrho$. By a standard change of variables, the last
problem reduces to the problem of regularity of the solution of
the Cauchy problem
\begin{equation}
\label{eq3.10} \partial_s\tilde v+\tilde L\tilde v
+\sum^d_{i=1}(r-\delta_i-\frac12a_{ii})\partial_{x_i}\tilde
v=r\tilde v-\tilde g,\qquad \tilde v(T)=\tilde \psi
\end{equation}
with suitably defined $\tilde g$, $\tilde \psi$ and with operator
$\tilde L$ being the infinitesimal generator of the semigroup on
$C_0(\BR^d)$ induced by the L\'evy process $\xi$, i.e.,
\begin{align*}
\tilde Lf(x)&=\frac12\sum^d_{i,j=1}a_{ij}\partial^2_{x_ix_j}f(x)
+\sum^d_{i=1}\gamma_i\partial_{x_i}f(x)\\
&\quad+\int_{\BR^d}\Big(f(x+y)-f(x)
-\sum^d_{i=1}y_i\fch_{\{|y|\le1\}}\partial_{x_i}f(x)\Big)\,\nu(dy)
\end{align*}
for $f\in C^2_0(\BR^d)$. The diffusion part of $\tilde L$ is a
uniformly elliptic operator, so to prove the regularity of $\tilde v$,
one can apply the methods of  the theory of parabolic equations
involving  integro-differential operators developed in Bensoussan and Lions (1982). It is worth pointing out, however, that the results of Bensoussan and Lions (1982) do not apply directly to our problem (in fact, they  provide
existence results under too-strong assumptions on $\psi,g$)
Therefore, in this section, we carefully investigate  problem
(\ref{eq3.9}). In our study, special emphasis is placed on the minimal
regularity assumptions on $\psi$ and the integrability assumptions on
the L\'evy measure $\nu$. At the end of this section, we provide a
stochastic representation of the solution of (\ref{eq3.9}).

\subsection{Variational solutions}
\label{sec4.1}

We assume that $\psi$ satisfies (\ref{eq3.1}). In what follows
\[
\rho(x)=e^{-\beta|x|}, \qquad\varrho(x)=e^{-\beta|\ln x|}
\cdot\frac{1}{|x_1\cdot\ldots\cdot x_d|^{1/2}}\,,\quad \quad x\in
D,
\]
where $\beta\ge0$ is  some constant and
\begin{equation}
\label{eq4.7} \ln x=(\ln(-1)^{i_1}x_1,\dots,\ln(-1)^{i_d}x_d))
\end{equation}
for $x\in D_{\iota}$ with $\iota=(i_1,\dots,i_d)$. In what follows
we will use some Sobolev spaces with weight $\varrho$ or $\rho$.
Our choice of the weights $\varrho,\rho$ will be justified in
Remark \ref{rem4.2}.

Let $\partial_t$, $\partial_{x_i}$, $i=1,\dots,d$, denote partial
derivatives in the distribution sense, and let
\[
L^2_{\varrho}=L^2(D;\varrho^2\,dx),\quad H^1_{\varrho}=\{u\in
L^2_{\varrho}:x_i\partial_{x_i}u\in L^2_{\varrho}\,,i=1,\dots,d\},
\]
\[
\WW^{0,1}_{\varrho}=\{u\in L^2(0,T;H^1_{\varrho}):\partial_tu\in
L^2(0,T;H^{-1}_{\varrho})\},
\]
where $H^{-1}_{\varrho}$ denotes the dual space of
$H^1_{\varrho}$. For $\varphi,\psi\in C^2_c(\BR^d)$ we set
\begin{equation}
\label{eq2.8} B_{\varrho}(\varphi,\psi)
=B^{BS}_{\varrho}(\varphi,\psi)+B^I_{\varrho}(\varphi,\psi),
\end{equation}
where
\[
B^{BS}_{\varrho}(\varphi,\psi)=-\frac12\sum^d_{i,j=1}a_{ij}
(\partial_{x_i}\varphi,\partial_{x_j}(x_ix_j\psi\varrho^2))_2
+\sum^d_{i=1}((r-\delta_i)x_i\partial_{x_i}\varphi,\psi\varrho^2)_2
\]
and
\[
B^{I}_{\varrho}(\varphi,\psi)=\int_{\BR^d}\Big(\int_{\BR^d}
\big(\varphi(xe^y)-\varphi(x)-\sum^d_{i=1}x_i(e^{y_i}-1)
\partial_{x_i}\varphi(x)\big)\,\nu(dy)\Big)\psi(x)\varrho^2(x)\,dx.
\]
In the above definitions, $(\cdot,\cdot)_2$ denotes the usual inner
product in $L^2(\BR^d;dx)$ and we use our convention
(\ref{eq4.2}). We will prove in Proposition \ref{prop4.3} that, if
$\beta\ge0$  and
\begin{equation}
\label{eq4.24} \int_{\{|y|>1\}}|y|e^{\beta|y|}\,\nu(dy)<\infty,
\end{equation}
then
\begin{equation}
\label{eq4.19} |B_{\varrho}(\varphi,\psi)|\le
c\|\varphi\|_{H^1_{\varrho}}\|\psi\|_{H^1_{\varrho}}
\end{equation}
for some $c>0$. Therefore, under (\ref{eq4.24}), the form
$B_{\varrho}$ can be extended to a bilinear form on
$H^1_{\varrho}\times H^1_{\varrho}$, which we still denote by
$B_{\varrho}$. Let us also observe that, for $\varphi\in
C^2_c(\BR^d)$, $\psi\in H^1_{\varrho}$, we have
\[
B_{\varrho}(\varphi,\psi)=(L\varphi,\psi)_{L^2_{\varrho}}
=(L\varphi,\psi\varrho^2)_2,
\]
where $L$ is defined by (\ref{eq4.1}).

Denote by $C([0,T];L^2_{\varrho})$ the space of all continuous
functions on $[0,T]$ with values in $L^2_{\varrho}$ equipped with
the norm $\|u\|_C=\sup_{0\le t\le T}\|u(t)\|_{L^2_{\varrho}}$. It
is known (see, e.g., Zhikov et al. (1981, theorem 2)) that there is a
continuous embedding of $\WW^{0,1}_{\varrho}$ into
$C([0,T];L^2_{\varrho})$. In particular, for every
$v\in\WW^{0,1}_{\varrho}$, one can find $w\in
C([0,T];L^2_{\varrho})$ such that $v(t)=w(t)$ for a.e.
$t\in[0,T]$. In what follows, we adopt the convention that any
element of $\WW^{0,1}_{\varrho}$ is already in
$C([0,T];L^2_{\varrho})$. With this convention, $v(T)$ is well
defined for $v\in\WW^{0,1}_{\varrho}$.

\begin{definition}
Let $\psi\in L^2_{\varrho}$, $g\in L^2(0,T;L^2_{\varrho})$ for
some $\beta\ge0$. We call  $v\in \WW^{0,1}_{\varrho}$ a
variational solution of the Cauchy problem (\ref{eq3.9}) if
$v(T)=\psi$  and for every $\eta\in C^{\infty}_c(Q_T)$,
\[
\int^T_0\langle\partial_tv(t),\eta(t)\rangle\,dt +\int^T_0
B_{\varrho}(v(t),\eta(t))\,dt=r\int_{Q_T}
v\eta\varrho^2\,dt\,dx-\int_{Q_{T}}\eta g\varrho^{2}\,dt\,dx,
\]
where  $\langle\cdot,\cdot\rangle$ denotes the duality pairing
between $H^{-1}_{\varrho}$ and $H^1_{\varrho}$.
\end{definition}

Now, set
\[
\tilde H^1_{\rho}=\{u\in L^2_{\rho}:\partial_{x_i}u\in
L^2_{\rho}\,,i=1,\dots,d\},
\]
\[
\tilde\WW^{0,1}_{\rho}=\{u\in L^2(0,T;L^2_{\rho}):
\partial_tu\in L^2(0,T;\tilde H^{-1}_{\rho})\},
\]
where $\tilde H^{-1}_{\rho}$ denotes the dual space of $\tilde
H^1_{\rho}$, and for $\varphi,\psi\in C^2_c(\BR^d)$ set
\[
\tilde B_{\rho}(\varphi,\psi)=\tilde B^{BS}_{\rho}(\varphi,\psi)
+\tilde B^I_{\rho}(\varphi,\psi),
\]
where
\begin{align}
\label{eq4.13} \tilde B^{BS}_{\rho}(\varphi,\psi)
&=-\frac12\sum^d_{i,j=1}a_{ij}
(\partial_{x_i}\varphi,\partial_{x_j}(\psi\rho^2))_2
+\sum^d_{i=1}((r-\delta_i+\gamma_i-\frac12a_{ii})
\partial_{x_i}\varphi,\psi\rho^2)_2\nonumber \\
&:=\tilde B^{BS,1}_{\rho}(\varphi,\psi)+\tilde
B^{BS,2}_{\rho}(\varphi,\psi)
\end{align}
and
\[
\tilde B^I_{\rho}(\varphi,\psi)=\int_{\BR^d}\Big(\int_{\BR^d}
\big(\varphi(x+y)-\varphi(x)-\sum^d_{i=1}y_i\fch_{\{|y|\le1\}}
\partial_{x_i}\varphi(x)\big)\,\nu(dy)\Big)\psi(x)\,\rho^2(x)\,dx.
\]
We will see in Proposition \ref{prop4.1} that $\tilde B_{\rho}$
can be extended to a bilinear form on $\tilde
H^1_{\rho}\times\tilde H^1_{\rho}$, which we still denote by
$\tilde B_{\rho}$.

Consider the space $C([0,T];L^2_{\rho})$ defined as
$C([0,T];L^2_{\varrho})$  but with $L^2_{\varrho}$ replaced by
$L^2_{\rho}$. Because the embedding of $\tilde\WW^{0,1}_{\rho}$ into
$C([0,T];L^2_{\rho})$ is continuous, as before, we may and will
assume that any element of $\tilde\WW^{0,1}_{\rho}$ is already in
$C([0,T];L^2_{\rho})$.

\begin{definition}
Let $\tilde\psi\in L^2_{\rho}$, $\tilde g\in L^2(0,T;L^2_{\rho})$
for some $\beta\ge0$. We call $\tilde v\in \tilde\WW^{0,1}_{\rho}$
a variational solution of the Cauchy problem (\ref{eq3.10}) if
$\tilde v(T)=\tilde\psi$,  and for every $\eta\in
C^{\infty}_c(Q_T)$,
\[
\int^T_0\langle\partial_t\tilde v(t),\eta(t)\rangle\,dt
+\int^T_0\tilde B_{\rho}(\tilde v(t),\eta(t))\,dt=r\int_{Q_T}
\tilde v\eta\rho^2\,dt\,dx-\int_{Q_{T}}\eta\tilde
g\rho^{2}\,dt\,dx,
\]
where $\langle\cdot,\cdot\rangle$ denotes the duality pairing
between $\tilde H^{-1}_{\rho}$ and $\tilde H^1_{\rho}$.
\end{definition}

\begin{proposition}
\label{prop4.1} Assume that $\tilde\psi\in L^2_{\rho}$, $\tilde
g\in L^2(0,T;L^2_{\rho})$  and \mbox{\rm(\ref{eq4.24})} is
satisfied for some $\beta\ge0$.  Then, there exists a unique
variational solution $\tilde v\in\tilde\WW^{0,1}_{\rho}$ of
\mbox{\rm(\ref{eq3.10})}. Moreover, there is $C>0$ such that
\begin{equation}
\label{eq4.5} \|\tilde v\|_{L^2(0,T;\tilde H^1_{\rho})}
+\|\partial_t\tilde v\|_{L^2(0,T;\tilde H^{-1}_{\rho})} \le
C(\|\tilde\psi\|_{L^2_{\rho}}+\|\tilde g\|_{L^2(0,T;L^2_{\rho})}).
\end{equation}
\end{proposition}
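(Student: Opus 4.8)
The plan is to verify the hypotheses of the classical Lions--Lax--Milgram theory for parabolic variational equations (as in Bensoussan and Lions (1982), or Lions (1969)) in the weighted Hilbert space framework built around $\tilde H^1_\rho \hookrightarrow L^2_\rho \hookrightarrow \tilde H^{-1}_\rho$. Concretely, I would first establish continuity of the form: the diffusion part $\tilde B^{BS,1}_\rho$ is handled by observing that differentiating $\psi\rho^2$ produces a term $(\partial_{x_j}\psi)\rho^2$ together with $\psi\cdot\partial_{x_j}(\rho^2)$, and since $\rho(x)=e^{-\beta|x|}$ has $|\partial_{x_j}\rho^2|\le 2\beta\rho^2$ (away from the origin, with the singularity being negligible), this term is controlled by $\|\varphi\|_{\tilde H^1_\rho}\|\psi\|_{\tilde H^1_\rho}$; the first-order part $\tilde B^{BS,2}_\rho$ is obviously bounded by $\|\partial_{x_i}\varphi\|_{L^2_\rho}\|\psi\|_{L^2_\rho}$. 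For the integral part $\tilde B^I_\rho$, the key is the elementary estimate, valid by a second-order Taylor expansion for $|y|\le 1$ and a direct bound for $|y|>1$,
\[
\Big|\int_{\BR^d}\big(\varphi(x+y)-\varphi(x)-\textstyle\sum_i y_i\fch_{\{|y|\le1\}}\partial_{x_i}\varphi(x)\big)\,\nu(dy)\Big|
\le C\Big(\|\nabla\varphi\|_{L^2(B(x,1))}+|\varphi(x)|+\ldots\Big),
\]
which, after multiplying by $\psi(x)\rho^2(x)$, integrating in $x$, using Fubini, and exploiting the translation bound $\rho^2(x)\le e^{2\beta|y|}\rho^2(x+y)$ together with $(\ref{eq4.24})$, yields $|\tilde B^I_\rho(\varphi,\psi)|\le c\|\varphi\|_{\tilde H^1_\rho}\|\psi\|_{\tilde H^1_\rho}$. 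Density of $C^2_c(\BR^d)$ in $\tilde H^1_\rho$ then extends $\tilde B_\rho$ to all of $\tilde H^1_\rho\times\tilde H^1_\rho$ (this is the content ``we will see in Proposition \ref{prop4.1}'' alluded to in the text).

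Next I would prove a G\r{a}rding (weak coercivity) inequality: there exist $\lambda\in\BR$ and $\alpha>0$ with
\[
\tilde B_\rho(\varphi,\varphi) + \lambda\|\varphi\|_{L^2_\rho}^2 \ge \alpha\|\varphi\|_{\tilde H^1_\rho}^2.
\]
The leading term comes from $\tilde B^{BS,1}_\rho$: after integrating by parts, $-\frac12\sum a_{ij}(\partial_{x_i}\varphi,\partial_{x_j}(\varphi\rho^2))_2$ equals $\frac12\sum a_{ij}(\partial_{x_i}\varphi,\partial_{x_j}\varphi)\rho^2$-type positive term minus a cross term $\frac12\sum a_{ij}(\partial_{x_i}\varphi)\varphi\,\partial_{x_j}(\rho^2)$; by uniform ellipticity of $a$ (assumption $(\ref{eq2.02})$, $\det a>0$) the main term dominates $\alpha_0\|\nabla\varphi\|_{L^2_\rho}^2$, and the cross term plus the first-order part $\tilde B^{BS,2}_\rho$ plus the integral part $\tilde B^I_\rho$ are all absorbed via Young's inequality $ab\le \varepsilon a^2 + C_\varepsilon b^2$ into a small multiple of $\|\nabla\varphi\|_{L^2_\rho}^2$ plus a large multiple of $\|\varphi\|_{L^2_\rho}^2$. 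The integral part requires a little care: one splits $\nu$ at $|y|=1$, uses the second-order Taylor bound on $|y|\le1$ (which gives a Hessian term, again absorbable since $\int_{|y|\le1}|y|^2\nu(dy)<\infty$), and on $|y|>1$ one uses that $\varphi(x+y)\varphi(x)\rho^2(x)\le \frac12(\varphi(x+y)^2 + \varphi(x)^2)\rho^2(x)$ with the translation bound to reduce everything to $\|\varphi\|_{L^2_\rho}^2$ times $\int_{|y|>1}e^{\beta|y|}\nu(dy)<\infty$.

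With continuity and G\r{a}rding's inequality in hand, the statement follows from the standard existence-uniqueness theorem for abstract parabolic equations (Lions' theorem / the Galerkin method): after the substitution $\tilde v(t) \mapsto e^{\lambda(T-t)}\tilde v(t)$ which shifts $\tilde B_\rho$ to a genuinely coercive form, one obtains a unique $\tilde v\in L^2(0,T;\tilde H^1_\rho)$ with $\partial_t\tilde v\in L^2(0,T;\tilde H^{-1}_\rho)$ solving $(\ref{eq3.10})$ with terminal datum $\tilde\psi\in L^2_\rho$; since the equation runs backward in time, this is simply the forward theory applied after $t\mapsto T-t$. The energy estimate $(\ref{eq4.5})$ is obtained in the usual way: test the equation against $\tilde v(t)$ itself, use the identity $\langle\partial_t\tilde v,\tilde v\rangle = \frac12\frac{d}{dt}\|\tilde v\|_{L^2_\rho}^2$, invoke coercivity on the left and Cauchy--Schwarz plus Young on the right ($|\int \tilde g\,\tilde v\,\rho^2| \le \varepsilon\|\tilde v\|_{L^2_\rho}^2 + C_\varepsilon\|\tilde g\|_{L^2_\rho}^2$), then integrate in $t$ and apply Gronwall to control $\|\tilde v\|_{L^2(0,T;\tilde H^1_\rho)}$ and $\sup_t\|\tilde v(t)\|_{L^2_\rho}$; the bound on $\|\partial_t\tilde v\|_{L^2(0,T;\tilde H^{-1}_\rho)}$ follows by reading it off from the equation $\partial_t\tilde v = r\tilde v - \tilde g - \tilde L\tilde v - \sum_i(r-\delta_i-\tfrac12a_{ii})\partial_{x_i}\tilde v$ together with continuity of $\tilde B_\rho$. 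The main obstacle is the integral (nonlocal) term $\tilde B^I_\rho$: one must handle simultaneously its small-jump part (near the diagonal, where a second-order expansion and $\int(1\wedge|y|^2)\nu(dy)<\infty$ are needed to stay inside $\tilde H^1_\rho$) and its large-jump part (where the weight $\rho$ decays and only the exponential moment $(\ref{eq4.24})$ keeps the translated weight comparable), and to do so with constants that do not spoil coercivity.
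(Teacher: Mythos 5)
Your overall architecture is the paper's: establish continuity and a G\r{a}rding inequality for $\tilde B_{\rho}$ on $\tilde H^1_{\rho}\times\tilde H^1_{\rho}$, then invoke the abstract parabolic existence theorem (the paper uses Lions--Magenes, Ch.~3, Theorem 4.1 and Remark 4.3) and read off the energy estimate; the treatment of the large jumps via the translation bound $\rho(x)\le e^{\beta|y|}\rho(x+\theta y)$ together with (\ref{eq4.24}) is also exactly what the paper does. The gap is in your treatment of the small-jump part of $\tilde B^I_{\rho}$, which is the technical heart of the matter. First, the ``Hessian term'' produced by the second-order Taylor expansion on $\{|y|\le1\}$ is not controlled by the $\tilde H^1_{\rho}$ norm at all, so it cannot be ``absorbed''; the displayed pointwise bound in your continuity step (by $\|\nabla\varphi\|_{L^2(B(x,1))}+|\varphi(x)|+\ldots$) is likewise not available for $\varphi\in\tilde H^1_{\rho}$. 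The paper avoids second derivatives of $\varphi$ by integrating by parts once inside the $x$-integral, transferring one derivative onto the test function, i.e.\ using
\begin{equation*}
\int_{\BR^d}\partial^2_{x_ix_j}\varphi(x+\theta'y)\,\psi(x)\rho^2(x)\,dx
=-\int_{\BR^d}\partial_{x_i}\varphi(x+\theta'y)\Big\{\partial_{x_j}\psi(x)-2\beta\frac{x_j}{|x|}\psi(x)\Big\}\rho^2(x)\,dx,
\end{equation*}
which yields a bound by $\|\varphi\|_{\tilde H^1_{\rho}}\|\psi\|_{\tilde H^1_{\rho}}$.

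Second, even after this repair your coercivity argument does not close: the resulting constant in front of $\|\varphi\|^2_{\tilde H^1_{\rho}}$ is of order $\int_{\{|y|\le1\}}|y|^2\,\nu(dy)$, which is finite but can be arbitrarily large compared with the ellipticity constant of $a$, so it cannot be absorbed into the coercive part coming from $\tilde B^{BS,1}_{\rho}$ by Young's inequality. The missing idea is the paper's additional splitting at a small level $\delta$ (the quantity $I^{\delta}_2$): on $\{|y|\le\delta\}$ one keeps the integrated-by-parts second-order bound, whose coefficient $\int_{\{|y|\le\delta\}}|y|^2\,\nu(dy)$ tends to $0$ as $\delta\downarrow0$ and hence can be made an arbitrarily small multiple $\varepsilon$ of $\|\varphi\|_{\tilde H^1_{\rho}}\|\psi\|_{\tilde H^1_{\rho}}$, while on $\{\delta<|y|\le1\}$ one uses only the first-order representation, giving $c_\delta\,\|\partial_x\varphi\|_{L^2_{\rho}}\|\psi\|_{L^2_{\rho}}$, which with $\psi=\varphi$ is absorbable by Young's inequality regardless of the size of $c_\delta$. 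This produces estimate (\ref{eq4.14}) with the $\varepsilon$-structure that makes the G\r{a}rding inequality in (\ref{eq4.8}) valid for every L\'evy measure with $\int(1\wedge|y|^2)\,\nu(dy)<\infty$ satisfying (\ref{eq4.24}); without it, your argument only works under an additional smallness assumption on $\int_{\{|y|\le1\}}|y|^2\,\nu(dy)$. Once (\ref{eq4.8}) is in place, the remainder of your proposal (time reversal, exponential shift, testing with $\tilde v$, Gronwall, and reading $\partial_t\tilde v$ off the equation) is standard and agrees with the paper.
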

\begin{proof}
By making a standard change of variables, we may and will assume
that $r=0$. If we prove that
\begin{equation}
\label{eq4.8} |\tilde B_{\rho}(\varphi,\psi)|\le
c\|\varphi\|_{\tilde H^1_{\rho}}\cdot\|\psi\|_{\tilde
H^1_{\rho}}\,,\qquad \tilde B_{\rho}(\varphi,\varphi)\ge
a\|\varphi\|^2_{\tilde H^1_{\rho}}-b\|\varphi\|^2_{L^2_{\rho}}
\end{equation}
for some strictly positive  constant $a$ and positive $b,c$, then
the existence of a unique variational solution of (\ref{eq3.10})
and (\ref{eq4.5}) follows from Theorem 4.1 and Remark 4.3 in
Chapter 3 of Lions and Magenes (1968). The proof of (\ref{eq4.8}) in the case that $d=1$ is given in Mateche et al. (2004, appendix). Because the proof in the case that $d>1$
proceeds as in the case that $d=1$, with some modifications, here we
only sketch it. We provide, however, a detailed proof of estimates
for the nonlocal part of $\tilde B_{\rho}$ because it shows why
we adopt assumption (\ref{eq4.24}). As $C^2_c(\BR^d)$ is dense
in $L^2_\rho$ and in $\tilde H^1_{\rho}$, in the proof of
(\ref{eq4.8}), we may assume that $\varphi,\psi\in C^2_c(\BR^d)$.
We have
\begin{equation}
\label{eq4.09} (\varphi(x+y)-\varphi(x))\fch_{\{|y|>1\}}
=\sum^d_{i=1}\int^1_0y_i\fch_{\{|y|>1\}}\partial_{x_i}\varphi(x+\theta
y)\,d\theta
\end{equation}
and
\begin{align}
\label{eq4.9} &(\varphi(x+y)-\varphi(x)
-\sum^d_{i=1}y_i\partial_{x_i}\varphi(x))\fch_{\{|y|\le1\}}\\
&\qquad=\sum^d_{i=1}\int^1_0y_i \fch_{\{|y|\le1\}}
(\partial_{x_i}\varphi(x+\theta y)-\partial_{x_i}\varphi(x))\,d\theta
\nonumber\\
&\qquad= \sum^d_{i,j=1}\int^1_0\Big(\int^{\theta}_0y_iy_j
\fch_{\{|y|\le1\}}\partial^2_{x_ix_j}
\varphi(x+\theta'y)\,d\theta'\Big)\,d\theta.\nonumber
\end{align}
Hence,
\begin{align*}
\tilde B^I_{\rho}(\varphi,\psi) &=\int_{\BR^d}\Big(\int_{\BR^d}
\Big(\int^1_0\sum^d_{i=1}y_i\fch_{\{|y|>1\}}
\partial_{x_i}\varphi(x+\theta y) \,d\theta\Big)
\nu(dy)\Big)\psi(x)\rho^2(x)\,dx\\
&\quad+\int_{\BR^d}\Big(\int_{\BR^d}
\Big(\int^1_0\Big(\int^{\theta}_0\sum^d_{i,j=1}y_iy_j
\fch_{\{|y|\le1\}}\partial^2_{x_ix_j}
\varphi(x+\theta'y)\,d\theta'\Big)\,d\theta\Big)
\nu(dy)\Big) \\
&\qquad\qquad\qquad\qquad\qquad\qquad\qquad
\times\psi(x)\rho^2(x)\,dx=:I_1+I_2.
\end{align*}
As $\rho(x)/\rho(x+\theta y)\le e^{\beta|y|}$ and
\begin{align*}
&\int_{\BR^d}\sum^d_{i=1}\partial_{x_i} \varphi(x+\theta y)
\psi(x)\rho^2(x)\,dx\\
&\quad=\int_{\BR^d}\sum^d_{i=1}\partial_{x_i}\varphi(x+\theta y)
\rho(x+\theta y)\frac{\rho(x)}{\rho(x+\theta y)}\psi(x)\rho(x)
\,dx,
\end{align*}
applying  Fubini's theorem, we obtain
\begin{equation}
\label{eq4.10} |I_1|\le
c_1\int_{\BR^d}|y|e^{\beta|y|}\fch_{\{|y|>1\}}\,\nu(dy)\cdot
\|\partial_x\varphi\|_{L^2_{\rho}} \|\psi\|_{L^2_{\rho}},
\end{equation}
where $\partial_x=(\partial_{x_1},\dots,\partial_{x_d})$. To
estimate $I_2$, we first observe that
\begin{align*}
&\int_{\BR^d}\partial^2_{x_ix_j}\varphi(x+\theta'y)
\psi(x)\rho^2(x)\,dx\\
&\qquad=-\int_{\BR^d} \partial_{x_i}\varphi(x+\theta'y)
\{\partial_{x_j}\psi(x)
-2\beta\frac{x_j}{|x|}\psi(x)\}\rho^2(x)\,dx\\
&\qquad=-\int_{\BR^d}\partial_{x_i}\varphi(x+\theta'y)
\rho(x+\theta'y) \frac{\rho(x)}{\rho(x+\theta' y)}
\{\partial_{x_j}\psi(x)
-2\beta\frac{x_j}{|x|}\psi(x)\}\rho(x)\,dx.
\end{align*}
For $0<\delta\le1$, let $I_2^{\delta}$ denote the integral defined
as $I_2$ but with $\fch_{\{|y|\le1\}}$ replaced by
$\fch_{\{|y|\le\delta\}}$. As $\rho(x)/\rho(x+\theta y)\le
e^{\beta}$ if $|y|\le\delta\le 1$, it follows from the above
estimate and Fubini's theorem that
\begin{equation}
\label{eq4.11} |I^{\delta}_2|\le
c_2\int_{\BR^d}|y|^2\fch_{\{|y|\le\delta\}}\,\nu(dy)\cdot
\|\varphi\|_{\tilde H^1_{\rho}}\|\psi\|_{\tilde H^1_{\rho}}\,.
\end{equation}
By the  second equation in (\ref{eq4.9}) with $\fch_{\{|y|\le1\}}$
replaced by $\fch_{\{\delta<|y|\le1\}}$, we have
\begin{align*}
&\int_{\BR^d}\Big(\int^1_0\Big(\int^{\theta}_0\sum^d_{i,j=1}y_iy_j
\fch_{\{\delta<|y|\le1\}}\partial^2_{x_ix_j}
\varphi(x+\theta'y)\,d\theta'\Big)\,d\theta\Big)\psi(x)\rho^2(x)\,dx\\
&\qquad=\int_{\BR^d}\Big(\int^1_0\sum^d_{i=1}y_i
\fch_{\{\delta<|y|\le1\}} (\partial_{x_i}\varphi(x+\theta
y)-\partial_{x_i}\varphi(x))\,d\theta\Big)\psi(x)\rho^2(x)\,dx\\
&\qquad=\int_{\BR^d}\Big(\int^1_0\sum^d_{i=1}y_i
\fch_{\{\delta<|y|\le1\}} (\partial_{x_i}\varphi(x+\theta y)
\rho(x+\theta y)\frac{\rho(x)}{\rho(x+\theta y)}
\,d\theta\Big)\psi(x)\rho(x)\,dx\\
&\qquad\quad-\int_{\BR^d}\Big(\int^1_0\sum^d_{i=1}y_i
\fch_{\{\delta<|y|\le1\}}\partial_{x_i}\varphi(x))
\,d\theta\Big)\rho(x)\psi(x)\rho(x)\,dx.
\end{align*}
Hence
\begin{equation}
\label{eq4.12} |I_2-I^{\delta}_2|\le c_3
\int_{\BR^d}|y|\fch_{\{\delta<|y|\le1\}}\,\nu(dy)\cdot
\|\partial_x\varphi\|_{L^2_{\rho}}\|\psi\|_{L^2_{\rho}}.
\end{equation}
As $\nu$ is a L\'evy measure,
$\lim_{\delta\rightarrow0^+}I^{\delta}_2=0$. From this and
(\ref{eq4.11}), (\ref{eq4.12}), it follows that, for every
$\varepsilon\in(0,1)$, there exists $C_{\varepsilon}\ge0$ such that
$|I_2|\le \|\varphi\|_{\tilde
H^1_{\rho}}(\varepsilon\|\psi\|_{\tilde
H^1_{\rho}}+C_{\varepsilon}\|\psi\|_{L^2_{\rho}})$. By this and
(\ref{eq4.10}),
\begin{equation}
\label{eq4.14} |\tilde B^{I}_{\rho}(\varphi,\psi)|\le
c_4\|\partial_x\varphi\|_{L^2_{\rho}} \|\psi\|_{L^2_{\rho}} +
\|\varphi\|_{\tilde H^1_{\rho}}(\varepsilon\|\psi\|_{\tilde
H^1_{\rho}}+C_{\varepsilon}\|\psi\|_{L^2_{\rho}}).
\end{equation}
One can check that
\begin{equation}
\label{eq4.15} |\tilde B^{BS,1}_{\rho}(\varphi,\psi)|\le
c_5\|\varphi\|_{\tilde H^1_{\rho}}\|\psi\|_{\tilde H^1_{\rho}}\,,
\qquad \tilde B^{BS,1}_{\rho}(\varphi,\varphi)\ge
a_1\|\partial_x\varphi\|^2_{L^2_{\rho}}
-b_1\|\varphi\|^2_{L^2_{\rho}}
\end{equation}
for some strictly positive constants $c_5,a_1,b_1$ (in the proof
of the second inequality, we use (\ref{eq2.02})). Moreover,
\begin{equation}
\label{eq4.16} |\tilde B^{BS,2}_{\rho}(\varphi,\psi)|\le
c_6\|\partial_x\varphi\|_{L^2_{\rho}} \|\psi\|_{L^2_{\rho}}.
\end{equation}
From (\ref{eq4.14})--(\ref{eq4.16}),  we deduce (\ref{eq4.8}) by
standard calculations.  This completes the proof of the
proposition.
\end{proof}

Given $\psi:D\rightarrow\BR$, $g:[0,T]\times D\rightarrow\BR$, let
us set
\begin{equation}
\label{eq4.22} \tilde \psi(x)
=\psi((-1)^{i_1}e^{x_1},\dots,(-1)^{i_d}e^{x_d}),\quad \tilde
g(t,x) =g(t,(-1)^{i_1}e^{x_1},\dots,(-1)^{i_d}e^{x_d})
\end{equation}
if  $x\in D_{\iota}$ with $\iota=(i_1,\dots,i_d)$. Observe that,
with this notation,
\[
\tilde \psi(\ln x)=\psi(t,x),\quad\tilde g(t,\ln x)=g(t,x),\quad
x\in D,
\]
where $\ln x$ is defined by (\ref{eq4.7}).

\begin{remark}
\label{rem4.2} (i) $\psi\in L^2_{\varrho}$ if and only if $\tilde
\psi\in L^2_{\rho}$ and $ g\in L^2(0,T;L^2_{\varrho})$ if and only
if $\tilde g\in L^2(0,T;L^2_{\rho})$, because, by the change of
variables formula, for any measurable $f:[0,T]\times
D\rightarrow[0,\infty)$, we have
\begin{align}
\label{eq4.17} \int_{\BR^d}f(t,x)\varrho^2(x)\,dx =\int_{P} \tilde
f(t,\ln x) \varrho^2(x)\,dx =\int_{\BR^d}\tilde
f(t,x)\rho^2(x)\,dx.
\end{align}
(ii) If a measurable $\psi:D\rightarrow\BR_+$ satisfies
(\ref{eq3.1}) and $\beta>p$, then $\psi\in L^2_{\varrho}$. Indeed,
we have
\[
\int_{\BR^d}|\tilde\psi(x)|^2\rho^2(x)\,dx\le c
\int_{\BR^d}(1+e^{p|x|})^2 e^{-2\beta|x|}\,dx<\infty
\]
for some $c$ depending on $d,p$. Hence, $\tilde\psi\in L^2_{\rho}$,
and consequently, $\psi\in L^2_{\varrho}$.
\end{remark}

\begin{proposition}
\label{prop4.3} Let $\beta>p$. Assume that $\psi$  satisfies
\mbox{\rm(\ref{eq3.1})},  $\nu$ satisfies \mbox{\rm(\ref{eq2.1}),
(\ref{eq4.24})} and  $g\in L^2(0,T;L^2_{\varrho})$. Then, there
exists a unique variational solution $v\in\WW^{0,1}_{\varrho}$ of
the Cauchy problem \mbox{\rm(\ref{eq3.9})}.
\end{proposition}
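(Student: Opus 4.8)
The plan is to reduce Proposition~\ref{prop4.3} to Proposition~\ref{prop4.1} by means of the logarithmic change of variables \mbox{\rm(\ref{eq4.22})}, $x\mapsto\ln x$, which maps each orthant $D_\iota$ diffeomorphically onto $\BR^d$ and, by Remark~\ref{rem2.1}(i), is compatible with the dynamics of $X$. First I would check that the transformed data satisfy the hypotheses of Proposition~\ref{prop4.1}: since $\beta>p$ and $\psi$ obeys \mbox{\rm(\ref{eq3.1})}, Remark~\ref{rem4.2}(ii) gives $\psi\in L^2_\varrho$, hence $\tilde\psi\in L^2_\rho$ by Remark~\ref{rem4.2}(i); likewise $g\in L^2(0,T;L^2_\varrho)$ yields $\tilde g\in L^2(0,T;L^2_\rho)$; and \mbox{\rm(\ref{eq4.24})} is assumed. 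Proposition~\ref{prop4.1} then provides a unique $\tilde v\in\tilde\WW^{0,1}_\rho$ solving the transformed problem \mbox{\rm(\ref{eq3.10})}, and I would set $v(t,x)=\tilde v(t,\ln x)$, $x\in D$.

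It remains to verify that $v$ is a variational solution of \mbox{\rm(\ref{eq3.9})} and that $v\mapsto\tilde v$ is a bijection of the corresponding solution spaces. For $\varphi,\psi\in C^2_c(\BR^d)$ supported in a single orthant, writing $\tilde\varphi,\tilde\psi$ for their transforms under $x\mapsto\ln x$, the chain rule gives $x_i\partial_{x_i}\varphi(x)=(\partial_{x_i}\tilde\varphi)(\ln x)$, $x_i^2\partial^2_{x_ix_i}\varphi(x)=(\partial^2_{x_ix_i}\tilde\varphi-\partial_{x_i}\tilde\varphi)(\ln x)$, $x_ix_j\partial^2_{x_ix_j}\varphi(x)=(\partial^2_{x_ix_j}\tilde\varphi)(\ln x)$ for $i\neq j$, and $\varphi(xe^y)=\tilde\varphi(\ln x+y)$; combined with the weight identity \mbox{\rm(\ref{eq4.17})} these carry the operator $L$ of \mbox{\rm(\ref{eq4.1})} into the generator $\tilde L$ of $\xi$ plus a constant first-order term, the nonlocal compensator $\sum_i x_i(e^{y_i}-1)\partial_{x_i}$ being traded for the truncated compensator $\sum_i y_i\fch_{\{|y|\le1\}}\partial_{x_i}$ of $\tilde L$ via \mbox{\rm(\ref{eq2.2})}. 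This identifies $B_\varrho(\varphi,\psi)$ with the bilinear form of \mbox{\rm(\ref{eq3.10})} at $(\tilde\varphi,\tilde\psi)$; by density the identity extends to $H^1_\varrho\times H^1_\varrho$, which in particular yields the boundedness estimate \mbox{\rm(\ref{eq4.19})} and, together with the coercivity estimate in \mbox{\rm(\ref{eq4.8})} established in the proof of Proposition~\ref{prop4.1} (any residual first-order term being absorbed by Young's inequality), an estimate of the form $B_\varrho(\varphi,\varphi)\ge a\|\varphi\|^2_{H^1_\varrho}-b\|\varphi\|^2_{L^2_\varrho}$. Transforming the test functions in the same way and using \mbox{\rm(\ref{eq4.17})} on the right-hand sides, one checks orthant by orthant that $v$ satisfies the variational identity defining a solution of \mbox{\rm(\ref{eq3.9})}, with $v(T)=\psi$; uniqueness follows by running the transformation backwards and invoking uniqueness in Proposition~\ref{prop4.1}. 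Alternatively, once \mbox{\rm(\ref{eq4.19})} and the coercivity estimate are in hand, one may apply Theorem~4.1 and Remark~4.3 in Chapter~3 of Lions and Magenes (1968) directly to $B_\varrho$, as in Proposition~\ref{prop4.1}.

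The step I expect to be the main obstacle is the change-of-variables computation above, for two reasons. First, one must make sure the integrations by parts producing the weighted factors $\partial_{x_j}(x_ix_j\psi\varrho^2)$ on one side and $\partial_{x_j}(\tilde\psi\rho^2)$ on the other are genuinely compatible under $x\mapsto\ln x$; this is exactly what forces the particular shape of $\varrho$ and is the content of Remark~\ref{rem4.2}. Second, the first-order correction produced by the jump part of $L$ through \mbox{\rm(\ref{eq2.2})} has coefficient $\int_{\BR^d}(e^{y_i}-1-y_i\fch_{\{|y|\le1\}})\,\nu(dy)$, which is finite precisely because of \mbox{\rm(\ref{eq2.1})} on $\{|y|>1\}$ and of $\int(1\wedge|y|^2)\,\nu(dy)<\infty$ on $\{|y|\le1\}$ --- this is where hypothesis \mbox{\rm(\ref{eq2.1})} of the proposition is used. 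A minor technical point is that elements of $C^\infty_c(Q_T)$ need not vanish on the coordinate hyperplanes, so the variational identity should be verified on the dense class $\bigcup_\iota C^\infty_c((0,T)\times D_\iota)$, or localised to each $D_\iota$ separately, which is legitimate because $X$ started in $D_\iota$ stays in $D_\iota$ and $\varrho$ decomposes accordingly.
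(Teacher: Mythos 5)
Your proposal follows essentially the same route as the paper: transfer the data by the logarithmic change of variables (4.22), use Remark 4.2 to see that $\tilde\psi\in L^2_{\rho}$ and $\tilde g\in L^2(0,T;L^2_{\rho})$, invoke Proposition 4.1 for the transformed problem (3.10), set $v(t,x)=\tilde v(t,\ln x)$, and verify via the identity (4.17) and the bilinear-form computation (where (2.2), together with the finiteness guaranteed by (2.1), absorbs the compensator mismatch into a bounded first-order term, giving (4.19)) that $v$ is the unique variational solution of (3.9). This matches the paper's argument in both structure and the key estimates, so no further comment is needed.
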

\begin{proof}
We first show (\ref{eq4.19}). Let $\varphi,\psi\in C^2_c(\BR^d)$,
and let $\tilde\varphi,\tilde\psi$ be defined by (\ref{eq4.22}).
Then
\begin{align*}
B^I_{\varrho}(\varphi,\psi)&=\int_{P}\Big(\int_{\BR^d}
\big(\tilde\varphi(y+\ln x)-\tilde\varphi(\ln x)
-\sum^d_{i=1}y_i\fch_{\{|y_i|<1\}}
\partial_{x_i}\tilde \varphi(\ln x)\big)\,\nu(dy)\Big)\\
&\qquad\qquad\qquad\qquad\qquad\qquad\qquad\qquad\qquad\qquad
\quad\times\tilde \psi(\ln x)\,\varrho^2(x)\,dx\\
&\quad-\int_{P}\Big(\int_{\BR^d} \big(
\sum^d_{i=1}(e^{y_i}-1-y_i\fch_{\{|y_i|<1\}})
\partial_{x_i}\tilde \varphi(\ln x)\big)\,\nu(dy)\Big)\tilde
\psi(\ln x)\,\varrho^2(x)\,dx\\
&=:I_1+I_2.
\end{align*}
Changing the variables $x_k\mapsto (-1)^{i_k}e^{z_k}$, we obtain
\[
I_1=\int_{\BR^d}\Big(\int_{\BR^d} \big(\tilde\varphi(y+z)
-\tilde\varphi(z) -\sum^d_{i=1}y_i\fch_{\{|y_i|<1\}}
\partial_{z_i}\tilde \varphi(z)\big)\,\nu(dy)\Big)
\tilde \psi(z)\,\rho^2(z)\,dz.
\]
Therefore, the proof of Proposition \ref{prop4.1} shows that there
is $c_2>0$ such that $|I_1|\le c\|\tilde\varphi\|_{\tilde
H^1_{\rho}}\|\tilde\psi\|_{\tilde H^1_{\rho}} $ if (\ref{eq4.24})
is satisfied. Hence,
\begin{equation}
\label{eq4.21} |I_1|\le c_2
\|\varphi\|_{H^1_{\varrho}}\|\psi\|_{H^1_{\varrho}}
\end{equation}
because by (\ref{eq4.17}), $\|\tilde\varphi\|_{\tilde
H^1_{\rho}}=\|\varphi\|_{H^1_{\varrho}}$ for any $\varphi\in
C^1_c(\BR^d)$. From (\ref{eq2.2})  it follows easily that
(\ref{eq4.21}) (perhaps with different constant) holds for $I_1$
replaced by $I_2$. Using (\ref{eq4.17}), one can also check that
\[
B^{BS}_{\varrho}(\varphi,\psi)\le
c_1\|\varphi\|_{H^1_{\varrho}}\|\psi\|_{H^1_{\varrho}}
\]
for some $c_1\ge0$,
which with the estimates for $I_1$ and $I_2$  yield
(\ref{eq4.19}). Thus, the form $B_{\varrho}$ is well defined. Now,
let $\psi$ denote the function appearing in the formulation of the
proposition and let $\tilde\psi(x)$, $\tilde g(t,x)$ be defined by
(\ref{eq4.22}). By Remark \ref{rem4.2}, $\tilde\psi\in L^2_{\rho}$
and $\tilde g\in L^2(0,T;L^2_{\rho})$. Therefore, by Proposition
\ref{prop4.1}, there exists  a unique solution $\tilde
v\in\tilde\WW^{0,1}_{\rho}$ of (\ref{eq3.10}). Define
$v:[0,T]\times D\rightarrow\BR$ as
\[
v(t,x)=\tilde v(t,\ln x),\quad t\in[0,T],\,x\in D.
\]
From the fact that $\tilde v\in\tilde\WW^{0,1}_{\rho}$, equalities
(\ref{eq4.17}) with  $f$ replaced by $\tilde v$ and similar
equalities with $f$ replaced by $x_i\partial_{x_i}\tilde v$, it
follows that $v\in\WW^{0,1}_{\varrho}$. One can also check that, if
$\tilde v$ satisfies (\ref{eq3.10}), then $v$ satisfies
(\ref{eq3.9}) (in the calculations, we use (\ref{eq2.2})), which
completes the proof.
\end{proof}

\begin{remark} 
For every $\beta\ge0$ and $x\in D_{\iota}$ with
$\iota=(i_1,\dots,i_d)$, we have
\begin{align*}
\varrho^2(x)&\ge
e^{-2\beta(|\ln(-1)^{i_1}x_1|+\ldots+|\ln(-1)^{i_d}x_d|)}
\frac{1}{|x_1\cdot\ldots\cdot x_d|}\\
&=\prod_{k: |x_k|\ge1} \frac{1}{|x_k|^{2\beta+1}}\,\cdot\prod_{k:
0<|x_k|<1}|x_k|^{2\beta-1}.
\end{align*}
\end{remark}

\subsection{Improved regularity  and stochastic representation}

Set
\[
H^2_{\varrho}=\{u\in L^2_{\varrho}:x_i\partial_{x_i}u\in
L^2_{\varrho}\,,\,x_ix_j\partial^2_{x_ix_j}u\in
L^2_{\varrho}\,,\,i,j=1,\dots,d\},
\]
\[
W^{1,2}_{\varrho}=\{u\in L^2(0,T;H^2_{\varrho}):\partial_tu\in
L^2(0,T;L^2_{\varrho})\}.
\]
and
\[
\tilde H^2_{\rho}=\{u\in L^2_{\rho}:\partial_{x_i}u\in
L^2_{\rho}\,,\,\partial^2_{x_ix_j}u\in L^2_{\rho}\,,\,i,j=1,\dots
d\},
\]
\[
\tilde W^{1,2}_{\rho}=\{u\in L^2(0,T;\tilde
H^2_{\rho}):\partial_tu\in L^2(0,T;L^2_{\rho})\}.
\]
In the case that $\beta=0$ (i.e., $\rho\equiv1$), we will omit the subscript
 $\rho$ in the above notation.

For $\varphi\in C^2_c(\BR^d)$, set
\begin{align*}
\tilde L_I\varphi(x)&=\int_{\BR^d}(\varphi(x+y)-\varphi(x)-
\sum_{i=1}^dy_i\partial_{x_i}\varphi(x))\fch_{\{|y|\le1\}}\,\nu(dy)\\
&\quad+\int_{\BR^d}(\varphi(x+y)-\varphi(x))
\fch_{\{|y|>1\}}\,\nu(dy):=\tilde L^1_I+\tilde L^2_I.
\end{align*}
By Bensoussan and Lions (1982, lemma 3.1.3), for  $r>0$, there exist constants
$a(r)$, $b(r)$ such that $a(r)\rightarrow0$ as $r\rightarrow0$ and
\begin{equation}
\label{eq4.30} \|\tilde L^1_I\varphi\|_{L^2_{\rho}}\le
c(a(r)\|\varphi\|_{\tilde H^2_{\rho}}
+b(r)\|\varphi\|_{L^2_{\rho}}).
\end{equation}
(in fact, this can be shown by using (\ref{eq4.9}) and modifying
the argument from the proof of (\ref{eq4.14})). Let
$c(\nu)=\nu(\{y:|y|>1\})$. By (\ref{eq4.09}), we have
\begin{align*}
\|\tilde L^2_I\varphi\|^2_{L^2_{\rho}}
&=\int_{\BR^d}\Big|\int_{\BR^d}
\Big(\sum^d_{i=1}\int^1_0y_i\fch_{\{|y|>1\}}\partial_{x_i}
\varphi(x+\theta y)\,d\theta\Big)\nu(dy)\Big|^2\rho^2(x)\,dx\\
&\le c(\nu)\int_{\BR^d}\Big(\int_{\BR^d}
\Big|\sum^d_{i=1}\int^1_0y_i\fch_{\{|y|>1\}}\partial_{x_i}
\varphi(x+\theta y)\,d\theta\Big|^2\,\nu(dy)\Big)\rho^2(x)\,dx\\
&\le c(\nu)\int_{\BR^d} \Big(\int_{\BR^d}
\int^1_0|y|^2\fch_{\{|y|>1\}}|\partial_{x_i}
\varphi(x+\theta y)|^2\,d\theta\,\nu(dy)\Big)\rho^2(x)\,dx\\
&=c(\nu)\int_{\BR^d}\int_{\BR^d}\int^1_0|y|^2
\fch_{\{|y|>1\}}|\partial_{x_i}
\varphi(x+\theta y)|^2\rho^2(x+\theta y)
\frac{\rho^2(x)}{\rho^2(x+\theta y)} \,d\theta\,\nu(dy)\,dx\\
&\le c(\nu)\int_{\BR^d}|y|^2e^{2\beta|y|}\fch_{\{|y|>1\}}\,\nu(dy)
\int_{\BR^d}|\partial_x\varphi(x)|^2\rho^2(x)\,dx.
\end{align*}
As a consequence, if
\begin{equation}
\label{eq4.33} \int_{\{|y|>1\}}|y|^2e^{2\beta|y|}\,\nu(dy)<\infty,
\end{equation}
then
\begin{equation}
\label{eq4.31} \|\tilde L^2_I\varphi\|_{L^2_{\rho}}\le c
\|\partial_x\varphi\|_{L^2_{\rho}} \le
c(\frac{\varepsilon}{2}\|\varphi\|_{\tilde
H^2_{\rho}}+\frac{1}{2\varepsilon}\|\varphi\|_{L^2_{\rho}})
\end{equation}
for $\varepsilon>0$.
Thus, if (\ref{eq4.33}) is satisfied, then the operator $\tilde
L_I$ may be extended to an operator on $\tilde H^2_{\rho}$. This
extension  will  still be denoted by $\tilde L_I$.

\begin{lemma}
\label{lem4.5} If $\tilde\psi\in\tilde H^1_{\rho}$ and $\tilde
g\in L^2(0,T;L^2_{\rho})$ for some $\beta\ge0$, then there exists a
unique solution $u\in\tilde W^{1,2}_{\rho}$ of the Cauchy problem
\begin{equation}
\label{eq4.38}
\partial_tu-\tilde L_{BS}u=\tilde g,\quad u(0)=\tilde\psi.
\end{equation}
Moreover, there is $c(\rho)>0$ depending only on $\rho$ such that
\begin{equation}
\label{eq4.39} \|u\|_{\tilde W^{1,2}_{\rho}}\le
c(\rho)(\|\tilde\psi\|_{\tilde H^1_{\rho}}+\|\tilde
g\|_{L^2(0,T;L^2_{\rho})}).
\end{equation}
\end{lemma}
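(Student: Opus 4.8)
The operator $\tilde L_{BS}$ in (\ref{eq4.38}) is a constant--coefficient second--order operator whose principal part $\frac12\sum_{i,j}a_{ij}\partial^2_{x_ix_j}$ is \emph{uniformly} elliptic by (\ref{eq2.02}), the remaining lower--order coefficients being constants; so $-\tilde L_{BS}$ is a classical coercive elliptic operator on $\BR^d$ and (\ref{eq4.38}) is a standard forward parabolic Cauchy problem. The plan is to run the classical $L^2$ maximal--regularity scheme (as in Bensoussan and Lions (1982), to which the paper appeals), the only nonstandard ingredient being the weight $\rho(x)=e^{-\beta|x|}$; the sole property of $\rho$ that will be used is that its logarithmic gradient $\nabla\rho/\rho=-\beta x/|x|$ is \emph{bounded}, with $|\nabla\rho/\rho|\le\beta$. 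First, arguing exactly as in the proof of Proposition \ref{prop4.1} --- the form $\tilde B^{BS}_\rho$ satisfies the continuity and coercivity estimates (\ref{eq4.15})--(\ref{eq4.16}) --- one obtains a unique variational solution $u\in\tilde\WW^{0,1}_\rho$ of (\ref{eq4.38}) together with $\|u\|_{\tilde\WW^{0,1}_\rho}\le C(\|\tilde\psi\|_{L^2_\rho}+\|\tilde g\|_{L^2(0,T;L^2_\rho)})$. It remains to upgrade this to $u\in\tilde W^{1,2}_\rho$ using the stronger hypothesis $\tilde\psi\in\tilde H^1_\rho$, and by a routine approximation (Galerkin, or smoothing of $\tilde\psi,\tilde g$ by compactly supported data) it suffices to prove the a priori bound (\ref{eq4.39}) for smooth solutions.

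To control the time derivative, test (\ref{eq4.38}) with $\partial_tu\cdot\rho^2$. Writing $\partial_{x_j}(\rho^2\partial_tu)=2(\partial_{x_j}\rho/\rho)\rho^2\partial_tu+\rho^2\partial_{x_j}\partial_tu$, the principal part of $\tilde L_{BS}$ produces $\frac14\frac{d}{dt}\sum_{i,j}a_{ij}(\rho\partial_{x_i}u,\rho\partial_{x_j}u)_2$, which by ellipticity is comparable to $\frac{d}{dt}\|\nabla u(t)\|^2_{L^2_\rho}$, plus a remainder bounded by $C\|\nabla u\|_{L^2_\rho}\|\partial_tu\|_{L^2_\rho}$ arising from the \emph{bounded} factor $\nabla\rho/\rho$; the first--order part of $\tilde L_{BS}$ contributes a similar remainder. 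Absorbing a small multiple of $\|\partial_tu\|^2_{L^2_\rho}$, integrating over $t\in[0,T]$, and using $\|\nabla u(0)\|_{L^2_\rho}=\|\nabla\tilde\psi\|_{L^2_\rho}\le\|\tilde\psi\|_{\tilde H^1_\rho}$ together with the $L^2(0,T;\tilde H^1_\rho)$ bound already obtained, one arrives at
\[
\|\partial_tu\|^2_{L^2(0,T;L^2_\rho)}+\sup_{0\le t\le T}\|\nabla u(t)\|^2_{L^2_\rho}\le C\big(\|\tilde\psi\|^2_{\tilde H^1_\rho}+\|\tilde g\|^2_{L^2(0,T;L^2_\rho)}\big).
\]

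It remains to bound the second derivatives. From (\ref{eq4.38}), $\tilde L_{BS}u=\partial_tu-\tilde g\in L^2(0,T;L^2_\rho)$ with norm controlled by the last display, so $u\in L^2(0,T;\tilde H^2_\rho)$ and the bound (\ref{eq4.39}) will follow once we have the weighted elliptic estimate $\|v\|_{\tilde H^2_\rho}\le c(\rho)\big(\|\tilde L_{BS}v\|_{L^2_\rho}+\|v\|_{\tilde H^1_\rho}\big)$. I would prove this by integrating by parts against $\rho^2$: the classical unweighted inequality $\|D^2v\|_{L^2(\BR^d)}\le\lambda^{-1}\|\sum_{i,j}a_{ij}\partial^2_{x_ix_j}v\|_{L^2(\BR^d)}$ (with $\lambda>0$ the ellipticity constant of $a$), immediate from Plancherel, becomes, after inserting $\rho^2$ and integrating by parts twice, the same inequality with $\rho^2$ weights plus correction terms of the forms $\int(\partial_{x_i}v)(\partial^2_{x_jx_k}v)(\partial_{x_\ell}\rho/\rho)\rho^2$ and $\int(\partial_{x_i}v)(\partial_{x_j}v)(\partial_{x_k}\rho/\rho)(\partial_{x_\ell}\rho/\rho)\rho^2$; since $\nabla\rho/\rho$ is bounded, all of these are absorbed by Young's inequality, and the lower--order terms of $\tilde L_{BS}$ are simply moved to the right--hand side. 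Applying this with $v=u(t)$ and integrating over $t$ completes (\ref{eq4.39}). Finally, since the bounds are uniform along the approximating smooth solutions, weak compactness in $\tilde W^{1,2}_\rho$ produces a solution of (\ref{eq4.38}) satisfying (\ref{eq4.39}); uniqueness is inherited from the variational solution of the first step, since any solution in $\tilde W^{1,2}_\rho$ is \emph{a fortiori} a variational solution in $\tilde\WW^{0,1}_\rho$.

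Everything here is classical except for the presence of the weight, and the one point requiring care is precisely its handling: the energy identities must be organized so that $\rho$ enters only through its (bounded) logarithmic gradient. In particular one must \emph{avoid} the naive conjugation $w=\rho v$, which introduces the coefficient $\Delta\rho/\rho$; for $\rho=e^{-\beta|x|}$ this equals $\beta^2-(d-1)\beta|x|^{-1}$, whose $|x|^{-1}$ singularity at the origin is not controlled by $\|\nabla v\|_{L^2_\rho}$ in low dimensions. Differentiating $\rho^2$ only once inside each integration by parts, as above, sidesteps this, and this is essentially the only modification of the Bensoussan--Lions scheme that Lemma \ref{lem4.5} requires.
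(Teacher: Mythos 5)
Your argument is correct, but it follows a genuinely different route from the paper's. The paper proves Lemma \ref{lem4.5} by approximating $\tilde\psi,\tilde g$ by unweighted data, solving the approximate problems in the unweighted space $\tilde W^{1,2}$ via the classical maximal-regularity theorem of Garroni and Menaldi (Theorem V.4.2), and then \emph{conjugating by the weight}: it checks that $u_N\cdot\rho$ solves the same equation with an extra right-hand side $h_N$ (the commutator of $\tilde L_{BS}$ with multiplication by $\rho$), estimates $h_N$ in $L^2(0,T;L^2_{\rho})$, applies the unweighted theorem to $u_N\cdot\rho$, and passes to the limit using the variational estimate (\ref{eq4.5}) exactly as you do at the end. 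You instead keep the weight inside the energy estimates: testing with $\partial_tu\,\rho^2$ for the time derivative and proving a weighted second-order $L^2$ estimate by two integrations by parts in which derivatives fall on $\rho^2$ only once at a time, so that the weight enters solely through the bounded quantity $\nabla\rho/\rho$. Your approach is longer but self-contained (no appeal to an unweighted maximal-regularity theorem beyond the smooth approximation step) and, as you observe, it deliberately avoids the commutator term, which involves second derivatives of $\rho$; this is precisely the delicate point of the paper's route, since for $\rho(x)=e^{-\beta|x|}$ the second derivatives are singular at the origin and the paper's bound on $h_N$ via $\|\rho\|_{\tilde H^2_{\rho}}$ is most comfortably read with $\rho$ replaced by the equivalent smooth weight $e^{-\beta\sqrt{1+|x|^2}}$ (for which the conjugation argument is unproblematic and gives the same conclusion). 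Two small points to make explicit in your write-up: the integrations by parts in the a priori estimates must be justified along the approximating solutions (compactly supported smooth data for the constant-coefficient operator, or the unweighted $\tilde W^{1,2}$ regularity the paper quotes, both suffice), and $\nabla\rho/\rho$ exists only a.e.\ since $\rho$ is Lipschitz but not $C^1$ at the origin --- harmless for your estimates, but worth a word. With these remarks, your proof is a valid alternative to the paper's and yields the same constant structure $c(\rho)=c(\beta,a,\gamma,\delta,r,T)$ in (\ref{eq4.39}).
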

\begin{proof}
Choose $\psi_N\in\tilde H^1$, $g_N\in L^2(0,T;L^2)$ so that
$\psi_N\rightarrow\tilde\psi$ in $\tilde H^1_{\rho}$ and
$g_N\rightarrow\tilde g$ in $L^2(0,T;L^2_{\rho})$. By classical
results (see, e.g., Garroni and Menaldi (1992, theorem V.4.2), for each $N$, there exists a unique solution $u_N\in \tilde W^{1,2}$ of the problem
\begin{equation}
\label{eq4.35}
\partial_tu_N-\tilde L_{BS}u_N=g_N,\quad u_N(0)=\psi_N.
\end{equation}
By Proposition \ref{prop4.1} (with $\nu\equiv0$),
\begin{equation}
\label{eq4.36} \|u_N\|_{L^2(0,T;\tilde H^1_{\rho})}\le
c(\|\psi_N\|_{L^2_{\rho}} +\|g_N\|_{L^2(0,T;L^2_{\rho})}).
\end{equation}
We check  by direct calculation  that $u_N\cdot\rho\in\tilde
W^{1,2}$ is a solution of the problem
\[
(\partial_t-\tilde L_{BS})(u_N\cdot\rho)=g_N\cdot\rho+h_N,\quad
u_N\cdot\rho(0)=\psi_N\cdot\rho
\]
with some $h_N\in L^2(0,T;L^2_{\rho})$ such that
\begin{equation}
\label{eq4.37} \|h_N\|_{L^2(0,T;L^2_{\rho})}\le
c\|u_N\|_{L^2(0,T;L^2_{\rho})}\|\rho\|_{\tilde H^2_{\rho}}.
\end{equation}
By Garroni and Menaldi (1992, theorem V.4.2) and (\ref{eq4.36}), (\ref{eq4.37}),
\begin{align*}
\|u_N\cdot\rho\|_{\tilde W^{1,2}}&\le
c(\|\psi_N\cdot\rho\|_{\tilde
H^1}+\|g_N\cdot\rho+h_N\|_{L^2(0,T;L^2)})\\
&\le c_1(\rho)(\|\psi_N\|_{\tilde H^1_{\rho}}
+\|g_N\|_{L^2(0,T;L^2_{\rho})}+\|u_N\|_{\tilde W^{0,1}_{\rho}})\\
&\le c_2(\rho)(\|\psi_N\|_{\tilde H^1_{\rho}}
+\|g_N\|_{L^2(0,T;L^2_{\rho})})
\end{align*}
for some constants $c_1(\rho),c_2(\rho)$ depending only on $\rho$.
As
\[
\|\partial_tu_N\|_{L^2(0,T;L^2_{\rho})}
=\|(\partial_tu_N)\cdot\rho\|_{L^2(0,T;L^2)}
=\|\partial_t(u_N\cdot\rho)\|_{L^2(0,T;L^2)},
\]
it follows in particular that
\[
\|\partial_tu_N\|_{L^2(0,T;L^2_{\rho})} \le
c_2(\rho)(\|\psi_N\|_{\tilde H^1_{\rho}}
+\|g_N\|_{L^2(0,T;L^2_{\rho})}).
\]
From this, (\ref{eq4.35}), (\ref{eq4.36}) and (\ref{eq2.02}), we
deduce that
\begin{equation}
\label{eq4.40} \|u_N\|_{\tilde W^{1,2}_{\rho}} \le
c(\rho)(\|\psi_N\|_{\tilde H^1_{\rho}}
+\|g_N\|_{L^2(0,T;L^2_{\rho})}).
\end{equation}
By Proposition \ref{prop4.1}, there exists a unique variational
solution $u\in\tilde\WW^{0,1}_{\rho}$ of (\ref{eq4.38}), and by
(\ref{eq4.5}), $u_N\rightarrow u$ in $\tilde\WW^{0,1}_{\rho}$ as
$N\rightarrow\infty$. From this and (\ref{eq4.40}), we conclude
that $u\in\tilde W^{1,2}_{\rho}$ and (\ref{eq4.39}) is satisfied.
\end{proof}

\begin{proposition}
\label{prop4.5}  Assume that  $\tilde\psi\in\tilde H^1_{\rho}$,
$\tilde g\in L^2(0,T;L^2_{\rho})$ and \mbox{\rm(\ref{eq4.33})} is
satisfied for some $\beta\ge0$. Then, the variational solution
$\tilde v$ of \mbox{\rm(\ref{eq3.10})} belongs to $\tilde
W^{1,2}_{\rho}$.
\end{proposition}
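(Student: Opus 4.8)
The plan is to solve \eqref{eq3.10} in $\tilde W^{1,2}_{\rho}$ by treating the nonlocal operator $\tilde L_I$ as a lower‑order perturbation of the uniformly parabolic operator $\partial_t-\tilde L_{BS}$ handled in Lemma \ref{lem4.5}, and then to identify the resulting strong solution with the variational solution $\tilde v$ by means of the uniqueness statement in Proposition \ref{prop4.1}. After the usual change of variables reducing to $r=0$, reversing time ($w(t)=\tilde v(T-t)$) and absorbing the first‑order terms of \eqref{eq3.10} into $\tilde L_{BS}$, the problem reads $\partial_t w-\tilde L_{BS}w=\tilde L_I w+\hat g$ on $[0,T]$, $w(0)=\tilde\psi$, with $\hat g(t)=\tilde g(T-t)\in L^2(0,T;L^2_{\rho})$. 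Fix a subinterval $[a,b]\subset[0,T]$ and $\phi\in\tilde H^1_{\rho}$, and define $\Phi\colon w\mapsto v$, where $v\in\tilde W^{1,2}_{\rho}([a,b])$ is the unique solution given by Lemma \ref{lem4.5} of $\partial_t v-\tilde L_{BS}v=\tilde L_I w+\hat g$ on $[a,b]$ with $v(a)=\phi$; this is legitimate because, under \eqref{eq4.33}, estimates \eqref{eq4.30} and \eqref{eq4.31} bound $\tilde L_I$ as a bounded operator from $\tilde H^2_{\rho}$ to $L^2_{\rho}$, so $\tilde L_I w\in L^2(a,b;L^2_{\rho})$ whenever $w\in\tilde W^{1,2}_{\rho}([a,b])$.

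The heart of the matter is the contraction estimate. For $w_1,w_2$ with $w_1(a)=w_2(a)$, applying Lemma \ref{lem4.5} to $v=\Phi w_1-\Phi w_2$ (which has vanishing initial value) gives
\[
\|\Phi w_1-\Phi w_2\|_{\tilde W^{1,2}_{\rho}([a,b])}\le c(\rho)\,\|\tilde L_I(w_1-w_2)\|_{L^2(a,b;L^2_{\rho})}.
\]
Combining \eqref{eq4.30} and \eqref{eq4.31} yields, for every choice of the parameter $r_0>0$ in \eqref{eq4.30} and $\varepsilon>0$ in \eqref{eq4.31},
\[
\|\tilde L_I\varphi\|_{L^2_{\rho}}\le \kappa(r_0,\varepsilon)\,\|\varphi\|_{\tilde H^2_{\rho}}+C(r_0,\varepsilon)\,\|\varphi\|_{L^2_{\rho}},\qquad \kappa(r_0,\varepsilon)\to0\ \text{as}\ r_0,\varepsilon\to0.
\]
I would first fix $r_0,\varepsilon$ so small that $c(\rho)\kappa(r_0,\varepsilon)\le\tfrac14$ (possible since $a(r_0)\to0$). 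Since $(w_1-w_2)(a)=0$, one has $\|(w_1-w_2)(t)\|_{L^2_{\rho}}\le(t-a)^{1/2}\|\partial_t(w_1-w_2)\|_{L^2(a,b;L^2_{\rho})}$, hence $\|w_1-w_2\|_{L^2(a,b;L^2_{\rho})}\le(b-a)\|w_1-w_2\|_{\tilde W^{1,2}_{\rho}([a,b])}$, so that
\[
\|\Phi w_1-\Phi w_2\|_{\tilde W^{1,2}_{\rho}([a,b])}\le\Bigl(\tfrac14+c(\rho)C(r_0,\varepsilon)(b-a)\Bigr)\|w_1-w_2\|_{\tilde W^{1,2}_{\rho}([a,b])}.
\]
Fixing $\delta>0$ with $c(\rho)C(r_0,\varepsilon)\delta\le\tfrac14$, the map $\Phi$ is a $\tfrac12$‑contraction on the complete metric space $\{w\in\tilde W^{1,2}_{\rho}([a,b]):w(a)=\phi\}$ whenever $b-a\le\delta$; crucially, $\delta$ depends only on $\rho$, $\nu$ and the operator, not on $a$ or $\phi$.

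To conclude, partition $[0,T]$ into consecutive intervals of length $\le\delta$, solve the fixed‑point equation on the first with $\phi=\tilde\psi$, use the endpoint value of the solution — which lies in $\tilde H^1_{\rho}$ by the continuous embedding $\tilde W^{1,2}_{\rho}\hookrightarrow C([0,T];\tilde H^1_{\rho})$ — as the initial datum on the next, and proceed. Concatenation produces $w^{\ast}\in\tilde W^{1,2}_{\rho}$ with $\partial_t w^{\ast}-\tilde L_{BS}w^{\ast}=\tilde L_I w^{\ast}+\hat g$ and $w^{\ast}(0)=\tilde\psi$. Then $\tilde v^{\ast}(s):=w^{\ast}(T-s)$ belongs to $\tilde W^{1,2}_{\rho}$ and solves \eqref{eq3.10} in the strong sense; testing against $\eta\in C^{\infty}_c(Q_T)$ and using, for a.e. $t$, the identity $(\tilde L_{BS}\varphi+\tilde L_I\varphi,\eta(t)\rho^2)_2=\tilde B_{\rho}(\varphi,\eta(t))$ for $\varphi\in\tilde H^2_{\rho}$ (integrate the second‑order term by parts; the nonlocal term is already in this form) shows that $\tilde v^{\ast}$ is a variational solution of \eqref{eq3.10}. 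By the uniqueness assertion of Proposition \ref{prop4.1}, $\tilde v^{\ast}=\tilde v$, whence $\tilde v\in\tilde W^{1,2}_{\rho}$; tracking the constants also yields an estimate $\|\tilde v\|_{\tilde W^{1,2}_{\rho}}\le C(\|\tilde\psi\|_{\tilde H^1_{\rho}}+\|\tilde g\|_{L^2(0,T;L^2_{\rho})})$. The main obstacle is exactly the contraction estimate: it depends on the fact that, under \eqref{eq4.33}, the $\tilde H^2_{\rho}\to L^2_{\rho}$ part of $\tilde L_I$ carries an arbitrarily small constant (this is where $a(r_0)\to0$ from Bensoussan and Lions (1982) is indispensable), the residual $L^2_{\rho}\to L^2_{\rho}$ part being absorbed by shrinking the time step; alternatively one could truncate the small jumps of $\nu$, obtain $\tilde W^{1,2}_{\rho}$ solutions directly from Lemma \ref{lem4.5}, and pass to the limit using the same uniform bound together with Proposition \ref{prop4.1}.
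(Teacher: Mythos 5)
Your proposal is correct and follows essentially the same route as the paper: treat $\tilde L_I$ as a perturbation of $\partial_t-\tilde L_{BS}$, use Lemma \ref{lem4.5} together with the bounds \mbox{(\ref{eq4.30})}, \mbox{(\ref{eq4.31})} to set up a Banach fixed point (small $a(r)$ and $\varepsilon$ for the $\tilde H^2_{\rho}$ part, small time step for the $L^2_{\rho}$ part, using the vanishing initial value of the difference), continue over a partition of $[0,T]$, and identify the resulting strong solution with $\tilde v$ via the uniqueness in Proposition \ref{prop4.1}. Your explicit continuation argument (uniform step $\delta$ and the embedding $\tilde W^{1,2}_{\rho}\hookrightarrow C([0,T];\tilde H^1_{\rho})$) merely spells out what the paper calls ``the standard argument.''
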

\begin{proof}
We first show that there exists a unique solution $u\in\tilde
W^{1,2}_{\rho}$ of the Cauchy problem
\begin{equation} \label{eq4.32}
\partial_tu-\tilde Lu=-g,\quad u(0)=\tilde\psi,
\end{equation}
where $g(t,x)=\tilde g(T-t,x)$. To see this, we define $F:\tilde
W^{1,2}_{\rho}\rightarrow\tilde W^{1,2}_{\rho}$ by putting $F(w)$
to be a unique solution $u\in \tilde W^{1,2}_{\rho}$ of the Cauchy
problem
\[
\partial_tu-\tilde L_{BS}u=-g+\tilde L_Iw,\quad u(0)=\tilde\psi.
\]
By (\ref{eq4.30}) and (\ref{eq4.31}),  $\tilde L_Iw\in
L^2(0,T;L^2_{\rho})$, so by Lemma \ref{lem4.5} the mapping $F$ is
well defined. Let $u_1=F(w_1), u_2=F(w_2)$ for some
$w_1,w_2\in\tilde W^{1,2}_{\rho}$, and let $w=w_1-w_2,u=u_1-u_2$.
Then, $u$ is a solution of the problem
\[
\partial_su-\tilde L_{BS}u=\tilde L_Iw,\quad u(0)=0.
\]
Using  (\ref{eq4.39}) and then (\ref{eq4.30}), (\ref{eq4.31}), we
obtain
\begin{align*}
\|u\|_{\tilde W^{1,2}_{\rho}}&\le c\|\tilde
L_Iw\|_{L^2(0,T;L^2_{\rho})} \\
&\le c_1(a(r)+\frac{\varepsilon}{2})\int^T_0\|w(t)\|_{\tilde
H^2_{\rho}}\,dt
+(b(r)+\frac{1}{2\varepsilon})\int^T_0\|w(t)\|_{L^2_{\rho}}\,dt.
\end{align*}
As $w(0)=0$ a.e. and $w\in\tilde W^{1,2}_{\rho}$,
\[
\|w(t)\|_{L^2_{\rho}}\le
t^{1/2}(\int^t_0\|\partial_sw(s)\|^2_{L^2_{\rho}}\,ds)^{1/2} \le
t^{1/2}\|w\|_{\tilde W^{1,2}_{\rho}}\,.
\]
Hence
\[
\|u\|_{\tilde W^{1,2}_{\rho}}  \le
c_1(a(r)+\frac{\varepsilon}{2})T^{1/2} \|w\|_{\tilde
W^{1,2}_{\rho}} +(b(r)+\frac{1}{2\varepsilon})T^{3/2}\|w\|_{\tilde
W^{1,2}_{\rho}}\,.
\]
Therefore, choosing first $r,\varepsilon$ so that
$c_1(a(r)+\frac{\varepsilon}{2})T^{1/2}\le1/4$ and then choosing
$T_0\le T$ so that $(b(r)+\frac{1}{2\varepsilon})T_0^{3/2}\le1/4$,
we see that $F$ is a contraction for $T:=T_0$. Therefore, there
exists a unique solution $u$ of  (\ref{eq4.32}) with $T$ replaced
by $T_0$ and hence, by the standard argument, a unique solution $u$
of (\ref{eq4.32}). The function $v$ defined as $v(t,x)=u(T-t,x)$
is then a unique strong solution of (\ref{eq3.10}). Of course,
a strong solution is a variational solution, so $v=\tilde v$ by
uniqueness. This proves the proposition.
\end{proof}

As a corollary to Proposition \ref{prop4.5} we get improved
regularity of  the solution of (\ref{eq4.9}) under  assumption
(\ref{eq4.33}).

\begin{proposition}
\label{prop4.6} Let $\beta>p$. Assume that $\psi$ satisfies
\mbox{\rm(\ref{eq3.1})} and $\tilde\psi\in\tilde H^1_{\rho}$,
where $\tilde\psi$ is defined by \mbox{\rm(\ref{eq4.22})}.
Moreover, assume that $\nu$ satisfies \mbox{\rm(\ref{eq2.1}),
(\ref{eq4.33})} and $g\in L^2(0,T;L^2_{\varrho})$. Then, the
variational solution $v$ of \mbox{\rm(\ref{eq3.9})} belongs to
$W^{1,2}_{\varrho}$.
\end{proposition}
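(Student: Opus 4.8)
The plan is to reduce the Cauchy problem (\ref{eq3.9}) to (\ref{eq3.10}) via the change of variables $x\mapsto\ln x$ that was already used in the proof of Proposition \ref{prop4.3}, to apply Proposition \ref{prop4.5} to the transformed problem, and then to transfer the improved regularity back to $v$. First I would verify that the data assumptions of Propositions \ref{prop4.3} and \ref{prop4.5} hold under the present hypotheses. Since $\beta>p$ and $\psi$ satisfies (\ref{eq3.1}), Remark \ref{rem4.2}(ii) gives $\psi\in L^2_{\varrho}$, equivalently (by (\ref{eq4.17})) $\tilde\psi\in L^2_{\rho}$; combined with the assumption $\tilde\psi\in\tilde H^1_{\rho}$ this provides the terminal data needed in Proposition \ref{prop4.5}. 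Likewise $g\in L^2(0,T;L^2_{\varrho})$ yields $\tilde g\in L^2(0,T;L^2_{\rho})$ by Remark \ref{rem4.2}(i), and (\ref{eq4.33}) implies (\ref{eq4.24}) because $|y|e^{\beta|y|}\le|y|^2e^{2\beta|y|}$ on $\{|y|>1\}$. Hence the hypotheses of Proposition \ref{prop4.3} are in force, so (\ref{eq3.9}) has a unique variational solution $v\in\WW^{0,1}_{\varrho}$, and the proof of that proposition shows that $v(t,x)=\tilde v(t,\ln x)$ for $x\in D$, where $\tilde v\in\tilde\WW^{0,1}_{\rho}$ is the unique variational solution of (\ref{eq3.10}) with $\tilde\psi,\tilde g$ given by (\ref{eq4.22}) and $\ln x$ by (\ref{eq4.7}).

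Next I would apply Proposition \ref{prop4.5} to this $\tilde v$: its hypotheses $\tilde\psi\in\tilde H^1_{\rho}$, $\tilde g\in L^2(0,T;L^2_{\rho})$ and (\ref{eq4.33}) are precisely what was checked above, so $\tilde v\in\tilde W^{1,2}_{\rho}$. It then remains to carry this regularity back through the $C^{\infty}$ diffeomorphism $x\mapsto z:=\ln x$ mapping each orthant $D_{\iota}$ onto $\BR^d$, which transforms distributional derivatives into distributional derivatives. By the chain rule one has $\partial_tv(t,x)=(\partial_t\tilde v)(t,z)$, $x_i\partial_{x_i}v(t,x)=(\partial_{z_i}\tilde v)(t,z)$ and $x_ix_j\partial^2_{x_ix_j}v(t,x)=(\partial^2_{z_iz_j}\tilde v)(t,z)-\delta_{ij}(\partial_{z_i}\tilde v)(t,z)$. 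Applying the change-of-variables identity (\ref{eq4.17}) to the square of each of these functions, together with $\tilde v\in\tilde W^{1,2}_{\rho}$, shows that $\partial_tv$, $x_i\partial_{x_i}v$ ($i=1,\dots,d$) and $x_ix_j\partial^2_{x_ix_j}v$ ($i,j=1,\dots,d$) all belong to $L^2(0,T;L^2_{\varrho})$, that is, $v\in W^{1,2}_{\varrho}$.

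The argument is essentially bookkeeping, since all the analytic work --- in particular the fixed-point argument that controls the nonlocal operator --- is already contained in Proposition \ref{prop4.5}. The only points requiring a little care are the second-order chain-rule identity above and the exact matching of the weight $\varrho$ on $D$ with the weight $\rho$ on $\BR^d$ under $x\mapsto\ln x$; the latter is precisely the content of (\ref{eq4.17}), established in Remark \ref{rem4.2}, so no genuine obstacle should arise.
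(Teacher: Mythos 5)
Your proposal is correct and follows essentially the same route as the paper: reduce (\ref{eq3.9}) to (\ref{eq3.10}) via $v(t,x)=\tilde v(t,\ln x)$ as in Proposition \ref{prop4.3}, invoke Proposition \ref{prop4.5} to get $\tilde v\in\tilde W^{1,2}_{\rho}$, and transfer the regularity back using (\ref{eq4.17}). The only difference is that you spell out the chain-rule identities and the implication (\ref{eq4.33})$\Rightarrow$(\ref{eq4.24}) which the paper leaves as ``one can check,'' and these details are right.
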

\begin{proof}
From the proof of Proposition \ref{prop4.3}, it follows that
\begin{equation}
\label{eq4.34} v(t,x)=\tilde v(t,\ln x),\quad t\in[0,T],x\in D,
\end{equation}
where $\tilde v\in\tilde W^{0,1}_{\rho}$ is a  variational
solution of (\ref{eq3.10}) with $\tilde\psi,\tilde g$ defined by
(\ref{eq4.22}). By Remark \ref{rem4.2}, $\tilde g\in
L^2(0,T;L^2_{\rho})$. Therefore, $\tilde v\in\tilde W^{1,2}_{\rho}$
by Proposition \ref{prop4.5}. Using (\ref{eq4.17}) and
(\ref{eq4.34}), one can check that $v\in W^{1,2}_{\varrho}$.
\end{proof}

\begin{remark}
\label{rem4.8} If $\psi:\BR^d\rightarrow\BR_+$ is Lipschitz
continuous, then $\psi$ satisfies (\ref{eq3.1}) and
$\tilde\psi\in\tilde H^1_{\rho}$.
\end{remark}

\begin{proposition}
\label{prop4.9} Let  $\beta>p$ and let $(a,\nu,\gamma)$ satisfy
\mbox{\rm(\ref{eq2.2}), (\ref{eq2.02}), (\ref{eq2.06}),
(\ref{eq4.24})}. Assume that $\psi:\BR^d\rightarrow\BR_+$
satisfies \mbox{\rm(\ref{eq3.1})}, that $\psi_{|D}$ is continuous, and
that $g\in L^2(0,T;L^2_{\varrho})$ is a nonnegative function such
that $E_{s,x}\int^T_sg(t,X_t)\,dt<\infty$ for every
$(s,x)\in[0,T]\times D$.
\begin{enumerate}
\item[\rm(i)]There exists a unique  function $v:[0,T]\times
D\rightarrow\BR$ such that
\[
E_{s,x}\int^T_s|v(t,X_t)|\,dt<\infty
\]
and
\begin{equation}
\label{eq3.8} v(s,x)=E_{s,x}\Big(\psi(X_T)+\int^T_s(-rv (t,X_t)
+g(t,X_t))\,dt\Big)
\end{equation}
for every $(s,x)\in[0,T]\times D$.

\item[\rm(ii)]$v\in\WW^{0,1}_{\varrho}$
and $v$ is a variational
solution of the Cauchy problem \mbox{\rm(\ref{eq3.9})}.

\item[\rm(iii)]
Assume additionally that $\tilde\psi\in\tilde H^1_{\rho}$, where
$\tilde\psi$ is defined by \mbox{\rm(\ref{eq4.22})}, and that
$\nu$ satisfies \mbox{\rm(\ref{eq4.33})}. Then, $v\in
W^{1,2}_{\varrho}$ and
\begin{equation}
\label{eq4.44}
\partial_sv+L v=rv-g\mbox{ a.e. in }Q_T.
\end{equation}
\end{enumerate}
\end{proposition}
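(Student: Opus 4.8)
The plan is to prove the three parts in order, exploiting the probabilistic representation together with the variational and regularity theory established earlier.

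For part (i), I would first establish uniqueness: if $v_1,v_2$ both satisfy the stated integrability and the fixed-point equation \mbox{\rm(\ref{eq3.8})}, then $w=v_1-v_2$ satisfies $w(s,x)=-rE_{s,x}\int^T_sw(t,X_t)\,dt$, and Gronwall's inequality applied to $\phi(s):=\sup_x|w(s,x)|$ (or, more carefully, to the function $s\mapsto E_{s,x}\int^T_s|w(t,X_t)|\,dt$, using the Markov property to obtain a renewal-type inequality) forces $w\equiv0$. For existence, I would define $v$ directly by a Picard iteration: set $v_0(s,x)=E_{s,x}(\psi(X_T)+\int^T_sg(t,X_t)\,dt)$ and $v_{n+1}(s,x)=E_{s,x}(\psi(X_T)+\int^T_s(-rv_n(t,X_t)+g(t,X_t))\,dt)$; the contraction estimate on short time intervals (of length depending only on $r$) plus the Markov property patching argument gives convergence to a solution. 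Finiteness of $E_{s,x}\int^T_s|v(t,X_t)|\,dt$ follows from the assumed integrability of $\psi(X_T)$ (guaranteed by \mbox{\rm(\ref{eq3.1})} and \mbox{\rm(\ref{eq2.06})}) and of $\int^T_sg(t,X_t)\,dt$ together with the uniform-in-$n$ bounds from the iteration.

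For part (ii), the goal is to identify the probabilistically-defined $v$ with the variational solution of \mbox{\rm(\ref{eq3.9})} whose existence is guaranteed by Proposition \ref{prop4.3} (whose hypotheses \mbox{\rm(\ref{eq2.1}), (\ref{eq4.24})} are implied by \mbox{\rm(\ref{eq2.06})}, and $g\in L^2(0,T;L^2_{\varrho})$ is assumed). The natural route is to run the argument through the associated (non-reflected) linear BSDE: on $(\Omega,(\FF^s_t),P_{s,x})$ the pair solving $Y_t=\psi(X_T)+\int^T_t(-rY_\theta+g(\theta,X_\theta))\,d\theta-\int^T_t dM_\theta$ has $Y_t=v(t,X_t)$ by the Markov property and uniqueness, exactly as in part (i). On the PDE side, one shows that the variational solution $\bar v$ of \mbox{\rm(\ref{eq3.9})}, via its Feynman--Kac representation (this is a standard consequence of the energy estimates plus Itô's formula applied to $e^{-r(t-s)}\bar v(t,X_t)$ for $\bar v$ in $\WW^{0,1}_{\varrho}$ — one approximates $\bar v$ by smooth functions and passes to the limit using the weight compatibility from Remark \ref{rem4.2}), satisfies the same fixed-point equation \mbox{\rm(\ref{eq3.8})}. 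Uniqueness from part (i) then yields $v=\bar v\in\WW^{0,1}_{\varrho}$. I would model this identification on the corresponding argument in Klimsiak and Rozkosz (2013, 2016), cited already in the paper.

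For part (iii), once $v\in\WW^{0,1}_{\varrho}$ is a variational solution of \mbox{\rm(\ref{eq3.9})}, one is exactly in the situation of Proposition \ref{prop4.6}: the extra hypotheses $\tilde\psi\in\tilde H^1_{\rho}$ and \mbox{\rm(\ref{eq4.33})} are precisely what that proposition requires (note \mbox{\rm(\ref{eq4.33})} implies \mbox{\rm(\ref{eq4.24})}), so $v\in W^{1,2}_{\varrho}$ follows immediately, and then \mbox{\rm(\ref{eq4.44})} holds a.e.\ because a variational solution lying in $W^{1,2}_{\varrho}$ satisfies the equation in the strong (pointwise a.e.) sense — test against $\eta\in C^\infty_c$, integrate by parts back, and use density. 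The main obstacle in the whole argument is part (ii): one must carefully justify the Feynman--Kac/Itô formula for a merely $\WW^{0,1}_{\varrho}$-regular function (not yet known to be in $W^{1,2}_{\varrho}$ at that stage), which requires an approximation scheme compatible with the exponential weight $\varrho$ and control of the nonlocal term $L_I$; this is where the integrability condition \mbox{\rm(\ref{eq4.24})} on $\nu$ and the structure of $\varrho$ versus $\rho$ (Remark \ref{rem4.2}) do the essential work. The rest is bookkeeping with the estimates already assembled in Section \ref{sec4}.
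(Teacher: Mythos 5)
Parts (i) and (iii) of your plan are essentially sound. For (i) you take a different route from the paper (Gronwall-type uniqueness plus Picard iteration), whereas the paper simply writes the solution explicitly: it sets $w(s,x)=E_{s,x}\big(e^{-rT}\psi(X_T)+\int^T_se^{-rt}g(t,X_t)\,dt\big)$ as in (\ref{eq3.15}), checks via the Markov property and integration by parts that $v(s,x)=e^{rs}w(s,x)$ solves (\ref{eq3.8}), and gets uniqueness by reversing the discounting; your version is more laborious but workable, provided you verify inductively that each Picard iterate has the required integrability. Part (iii) coincides with the paper's argument (Proposition \ref{prop4.6} plus testing against $C^\infty_c$ functions). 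One small slip: (\ref{eq2.06}) does \emph{not} imply (\ref{eq4.24}) (it gives exponential moments only in the positive coordinate directions, while (\ref{eq4.24}) requires them in all directions); this is harmless here only because (\ref{eq4.24}) is assumed separately in the hypotheses.

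The genuine gap is in part (ii). You assert that the variational solution $\bar v\in\WW^{0,1}_{\varrho}$ admits a Feynman--Kac representation as ``a standard consequence of the energy estimates plus It\^o's formula,'' to be obtained by approximating $\bar v$ itself by smooth functions. At $\WW^{0,1}_{\varrho}$ regularity this is not standard, and the approximation-of-the-solution scheme breaks down where you wave it through: if $v_n$ are mollifications of $\bar v$, they satisfy no tractable equation, and the residual $(\partial_t+L-r)v_n+g$ converges to $0$ only in $L^2(0,T;H^{-1}_{\varrho})$ (and the spatial gradients only in weighted $L^2$), a mode of convergence that can neither be composed with $X$ pointwise nor integrated against $E_{s,x}$; to convert it into convergence of expectations you would need a Krylov-type (equivalence-of-norms) bound of the form $E_{s,x}\int^T_s|f(t,X_t)|\,dt\le C\|f\|$ for a space-time weighted norm, i.e.\ control of the occupation measure of $X$ by the weighted Lebesgue measure, which is neither established in the paper nor supplied in your sketch. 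The paper's proof avoids exactly this: it approximates the \emph{data}, not the solution. After reducing to $w$ solving (\ref{eq3.16}) and changing variables to the L\'evy process $\eta=\ln X$, it takes $\psi_n\in C^2_c(\BR^d)$ uniformly bounded with $\psi_n\rightarrow\tilde\psi$ in $L^2_{\rho}$ and $g_n=\tilde g\wedge n$, invokes Bensoussan and Lions (Theorems 3.3.3 and 3.8.1) to obtain for each $n$ both a regular solution $w_n$ of (\ref{eq3.12}) and its stochastic representation (\ref{eq3.13}), passes to the limit on the analytic side using the a priori estimate (\ref{eq4.5}) and on the probabilistic side by dominated and monotone convergence, and finally removes the boundedness assumption on $\tilde\psi$ by a further monotone truncation $\tilde\psi\wedge k$. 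To make your part (ii) rigorous you must either supply the missing Krylov/equivalence-of-norms input or restructure the identification along this data-approximation route.
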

\begin{proof}
For $(s,x)\in[0,T]\times D$ set
\begin{equation} \label{eq3.15}
w(s,x)=E_{s,x}\Big(e^{-rT}\psi(X_T)+\int^T_se^{-rt}g(t,X_t)\,dt\Big)
\end{equation}
and $Y_t=w(t,X_t)$, $t\in[s,T]$. Let  $M^{s,x}$ be a c\`adl\`ag
martingale on the probability space $(\Omega,(\FF^s_t),P_{s,x})$
defined as
\[
M^{s,x}_t=E_{s,x}\Big(e^{-rT}\psi(X_T) +\int^T_s
e^{-r\theta}g(\theta,X_{\theta})\,d\theta\big|\FF^s_t\Big)
-w(s,X_s),\quad t\in[s,T].
\]
Then
\begin{align*}
&e^{-rT}\psi(X_T)+\int^T_te^{-r\theta}g(\theta,X_{\theta})\,d\theta
-\int^T_tdM^{s,x}_{\theta} \\
&\qquad=E_{s,x}\Big(e^{-rT}\psi(X_T) +\int^T_te^{-r\theta}
g(\theta,X_{\theta})\,d\theta\big|\FF^s_t\Big),
\end{align*}
By the Markov property and (\ref{eq3.15}), the right-hand side of
the above equality is equal to $w(t,X_t)$. Hence, the pair
$(Y,M^{s,x})$ is a solution of the BSDE
\[
Y_t=e^{-rT}\psi(X_T)+\int^T_te^{-r\theta}g(\theta,X_{\theta})\,d\theta
-\int^T_tdM^{s,x}_{\theta},\quad t\in[s,T]
\]
on  $(\Omega,(\FF^s_t),P_{s,x})$. Integrating by parts, we obtain
\[
e^{rs}Y_s=\psi(X_T)+\int^T_s(-re^{rt}Y_{t}+g(t,X_t))\,dt
+\int^T_se^{rt}\,dM^{s,x}_t.
\]
Therefore, $v$ defined as $v(s,x)=e^{rs}w(s,x)$,
$(s,x)\in[0,T]\times D$, satisfies (\ref{eq3.8}). Because in much
the same way,  one can show that, if $v$ satisfies (\ref{eq3.8}),
then $w$ defined as $w(s,x)=e^{-rs}v(s,x)$ is given by
(\ref{eq3.15}), $v$ satisfying (\ref{eq3.8}) is unique. This
proves (i).

It is easily seen that $v$ is a variational solution of
(\ref{eq3.9}) in $\WW^{0,1}_{\varrho}$ if and only if $w$ is a
variational solution of the problem
\begin{equation}
\label{eq3.16}
\partial_sw+L w=-e^{-rs}g, \qquad w(T,\cdot)=e^{-rT}\psi
\end{equation}
in $\WW^{0,1}_{\varrho}$. Therefore, it suffices to show that $w$
defined by  (\ref{eq3.15}) is a variational solution of
(\ref{eq3.16}) in the space $\WW^{0,1}_{\varrho}$. Set
\[
\eta_t=\ln X_t=(\ln(-1)^{i_1}X^1_t,\dots,\ln(-1)^{i_d}X^d_t),\quad
t\in[s,T]
\]
and define  $\tilde w,\tilde \psi,\tilde g$ by (\ref{eq4.22}),
i.e., $ \tilde w(t,x)=w(t,\tilde x)$, $\tilde\psi(x)=\psi(\tilde
x)$, $ \tilde g(t,x)=g(t,\tilde x)$, where $\tilde
x=((-1)^{i_1}e^{x_1},\dots,(-1)^{i_d}e^{x_d})$ for $x\in
D_{\iota}$ with $\iota=(i_1,\dots,i_d)$. Observe that for every
$x\in D$ the process $\eta$ is under $P_{s,\tilde x}$  a L\'evy
process with the characteristic triplet $(a,\nu,\gamma+r-\delta)$
starting at time $s$ from $x$. By (\ref{eq3.15}), for every
$(s,x)\in[0,T]\times D$ we have
\begin{equation}
\label{eq3.11} \tilde w(s,x) =E_{s,\tilde x }
\Big(e^{-rT}\tilde\psi(\eta_T)+\int^T_se^{-rt}\tilde
g(t,\eta_t)\,dt\Big).
\end{equation}
By Remark \ref{rem4.2},  $\tilde g\in L^2(0,T;L^2_{\rho})$.
Suppose for the moment that $\tilde\psi$ is bounded. Let
$\{\psi_n\}\subset C^2_c(\BR^d)$ be a sequence such that
$\sup_{n\ge1}\|\psi\|_{\infty}<\infty$ and
$\psi_n\rightarrow\tilde\psi$ in $L^2_{\rho}$, and let $g_n=\tilde
g\wedge n$. Then, by Bensoussan and Lions (1982, theorem 3.3.3), for each $n\in\BN$, there exists a unique variational solution
$w_n\in\tilde\WW^{0,1}_{\rho} $ (in fact $w_n\in\tilde
W^{1,2}_{\rho}$) of the problem
\begin{equation}
\label{eq3.12}
\partial_s w_n+\tilde Lw_n
+\sum^d_{i=1}(r-\delta_i-\frac12a_{ii})\partial_{x_i}w_n=-e^{-rs}
g_n,\qquad  w_n(T,\cdot)=e^{-rT}\psi_n.
\end{equation}
By Bensoussan and Lions (1982, theorem 3.8.1), $w_n$ has the representation
\begin{equation}
\label{eq3.13} w_n(s,x) =E_{s,\tilde x}\Big(e^{-rT}\psi_n(\eta_T)
+\int^T_se^{-rt}g_n(t,\eta_t)\,dt\Big).
\end{equation}
By a priori estimate (\ref{eq4.5}), $\{w_n\}$ converges in
$\tilde\WW^{0,1}_{\rho}$ to the unique variational solution $\hat
w\in\tilde\WW^{0,1}_{\rho}$ of  problem
\begin{equation}
\label{eq3.17}
\partial_s\hat w+\tilde L\hat w
+\sum^d_{i=1}(r-\delta_i-\frac12a_{ii})\partial_{x_i} \hat
w=-e^{-rt}\tilde g,\qquad \hat w(T,\cdot)=e^{-rT}\tilde \psi.
\end{equation}
On the other hand, $E_{s,\tilde x}e^{-rT}\psi_n(\eta_T)\rightarrow
E_{s,\tilde x}e^{-rT}\tilde\psi(\eta_T)$ by the dominated
convergence, and $E_{s,\tilde x}
\int^T_se^{-rt}g_n(t,\eta_t)\,dt\rightarrow E_{s,\tilde x}
\int^T_se^{-rt}\tilde g(t,\eta_t)\,dt$ by the monotone
convergence, so the right-hand side of (\ref{eq3.13}) converges to
the right-hand side of (\ref{eq3.11}), that is, $w_n\rightarrow
\tilde w$ pointwise. It follows that  if $\tilde \psi$ is bounded
then $\tilde w$ is a version of the solution $\hat w$ of
(\ref{eq3.17}). Consider now the general case.  For $k\in\BN$, set
$\tilde\psi_k=\tilde\psi\wedge k$. By what has already been
proved, $\tilde w_k$ defined by
\begin{equation}
\label{eq4.42} \tilde w_k(s,x) =E_{s,\tilde x }
\Big(e^{-rT}\tilde\psi_k(\eta_T)+\int^T_se^{-rt}\tilde
g(t,\eta_t)\,dt\Big)
\end{equation}
is a version of the solution $\hat w_k$  of (\ref{eq3.17}) with
$\tilde\psi$ replaced by $\tilde\psi_k$. By (\ref{eq4.5}),
$\{\tilde w_k\}$ converges in $\tilde\WW^{0,1}_{\rho}$ to the
unique variational solution $\hat w\in\tilde\WW^{0,1}_{\rho}$ of
(\ref{eq3.17}). By monotone convergence, the right-hand side of
(\ref{eq4.42}) converges to  the right-hand side of
(\ref{eq3.11}). Thus, for $\psi,g$ satisfying the assumptions of
the proposition, the function  $\tilde w$ is a version of the
solution $\hat w$ of (\ref{eq3.17}), and hence $w$ defined by
(\ref{eq3.15}) is a variational solution of (\ref{eq3.16}) (see
the end of the proof of Proposition \ref{prop4.3}). This completes
the proof of part (ii).

By Proposition \ref{prop4.6}, $v\in W^{1,2}_{\varrho}$. From this
and part (ii), it follows that, for every $\eta\in
C^{\infty}_c((0,T)\times\BR^d)$,
\[
\int^T_0\big(\partial_tv(t)+Lv(t) -rv(t)+
g(t),\eta(t)\varrho^2\big)_2\,dt=0,
\]
which implies (\ref{eq4.44}).
\end{proof}

\section{Obstacle problem and reflected BSDEs} \label{sec5}

In this section, we consider the obstacle problem
\begin{equation}
\label{eq4.3} \min\{-\partial_s u-L u+ru,u-\psi\}=0,\quad
u(T)=\psi
\end{equation}
associated with the  optimal stopping problem (\ref{eq2.09})
(and hence with the reflected BSDE (\ref{eq3.06})). For reasons
briefly explained in the introduction, we regard (\ref{eq4.3}) as a
complementarity problem (\ref{eq1.4}), (\ref{eq1.5}). In what
follows, we will show that its unique solution is of the form
$(u,\mu)$, where $u$ is defined by (\ref{eq2.09}) and $\mu$ is the
measure corresponding to $K^s$ in the sense of Lemma \ref{lem3.3}.
Using the convexity of $\psi$, we also show  that $\mu$ is absolutely
continuous with respect to the Lebesgue measure, and we give a
formula for its density. To do this, we carefully examine the
process $K^s$.

Assume that $\psi:\BR^d\rightarrow\BR$ is convex. Let $m$ denote
the Lebesgue measure on $\BR^d$, $\nabla_i\psi$ denote the usual
partial derivative with respect to $x_i$, $i=1\dots,d$, and let
$E$ be the set of all $x\in\BR^d$ for which the gradient
\[
\nabla\psi(x)=(\nabla_1\psi(x),\dots,\nabla_d\psi(x))
\]
exists. As $\psi$ is locally Lipschitz continuous, $m(E^c)=0$
and $\nabla\psi=(\psi_{x_1},\dots,\psi_{x_n})$ a.e. (recall that
$\psi_{x_i}$ stands for the partial derivative in the distribution
sense). Moreover, for a.e. $x\in E$, there exists a $d$-dimensional
symmetric matrix $\{H(x)=\{H_{ij}(x)\}$ such that
\begin{equation}
\label{eq3.26} \lim_{E\ni y\rightarrow x}
\frac{\nabla\psi(y)-\nabla\psi(x) -H(x)(y-x)}{|y-x|}=0,
\end{equation}
i.e., $H_{ij}(x)$ are defined as limits through the set where
$\nabla_i\psi$ exists (see, e.g., Alberti and Ambrosio (1999, section 7.9)). By Alexandrov's theorem (see, e.g.,  Alberti and Ambrosio (1999, theorem 7.10)), if $x\in E$ is a point where (\ref{eq3.26}) holds, then $\psi$ has
a second-order differential at $x$ and $H(x)$ is the Hessian matrix
of $\psi$ at $x$, i.e., $H(x)=\{\nabla^2_{ij}\psi(x)\}$.

The second-order derivative of $\psi$ in the distribution sense
$D^2\psi=\{\partial_{x_ix_j}\psi\}_{i,j=1,\dots,d}$ is a matrix of
real-valued Radon measures $\{\mu_{ij}\}_{i,j=1,\dots,d}$ on
$\BR^d$ such that $\mu_{ij}=\mu_{ji}$, and for each Borel set $B$,
$\{\mu_{ij}(B)\}$ is a nonnegative definite matrix (see, e.g.,
Evans and Gariepy (1992, section 6.3)).

Let $(s,x)\in[0,T)\times D$. As $\psi$ is convex, by
Bouleau (1984, theorem 3), there exists a c\`adl\`ag adapted increasing
process $V^s$ on $[s,T]$ such that for every $x\in\BR^d$,
\begin{equation}
\label{eq3.20} \psi(X_t)=\psi(X_s)+\sum_{i=1}^d\int^t_s
\nabla_i\psi(X_{\theta-})\,dX^i_{\theta} +V^s_t,\quad t\in[s,T],
\quad P_{s,x}\mbox{-a.s.}
\end{equation}
Moreover, the process $A^s$ defined as
\[
A^s_t=V^s_t-J^s_t,\quad t\in[s,T],
\]
where
\[
J^s_t=\sum_{s<\theta\le t} \{\psi(X_{\theta})-\psi(X_{\theta-})
-\sum^d_{i=1} \nabla_i\psi(X_{\theta-})\Delta X^i_{\theta}\},
\]
is a continuous increasing process.

\begin{lemma}
\label{lem3.2} Assume that $\psi:\BR^d\rightarrow\BR$ is convex.
Let $V^s,A^s$ be defined as above and let
\[
A^{s,a}_t=\frac12\int^t_s\sum^d_{i,j=1}
a_{ij}X^i_{\theta}X^j_{\theta}\nabla^2_{ij}\psi(X_{\theta})\,d\theta,
\quad t\in[s,T],
\]
Then, $V^s_t-A^{s,a}_t$, $t\ge s$, is an increasing process under
the measure $P_{s,x}$.
\end{lemma}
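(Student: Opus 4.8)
\noindent The plan is to prove directly that, $P_{s,x}$-a.s., the process $t\mapsto V^s_t-A^{s,a}_t$ is nondecreasing; this is the assertion of the lemma, and it is equivalent to $A^s-A^{s,a}$ being increasing, since $J^s$ is itself increasing (it is constant off the jump times and its jumps coincide with the nonnegative jumps of $V^s$). It suffices to verify $V^s_\tau-A^{s,a}_\tau\ge V^s_\sigma-A^{s,a}_\sigma$ only for $s<\sigma\le\tau\le T$, the case $\sigma=s$ then following by letting $\sigma\downarrow s$ and using right-continuity of $V^s$ and continuity of $A^{s,a}$; keeping $\sigma$ away from $s$ avoids the time-singularity of $p(\theta-s,x,\cdot)$ at $\theta=s$. (Under the standing assumptions $A^{s,a}_t<\infty$ $P_{s,x}$-a.s., because $E_{s,x}A^{s,a}_t\le\frac12\int_s^t\!\int|y|^2|\nabla^2\psi(y)|p(\theta-s,x,y)\,dy\,d\theta<\infty$, the inner integral being $O((\theta-s)^{-1/2})$ since the singular set of a convex function has locally finite $(d-1)$-dimensional measure.)

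The tool is mollification. Let $\rho_n$ be a symmetric mollifier and $\psi_n=\psi*\rho_n\in C^\infty(\BR^d)$. Then $\psi_n$ is convex, $\psi_n\to\psi$ locally uniformly, $\nabla\psi_n=\rho_n*\nabla\psi\to\nabla\psi$ a.e., and, writing $D^2\psi=\nabla^2\psi\,dx+(D^2\psi)_{\mathrm s}$ for the Lebesgue decomposition of the matrix-valued distributional Hessian ($\nabla^2\psi$ the Alexandrov Hessian), $\nabla^2\psi_n=\rho_n*(\nabla^2\psi\,dx)+\rho_n*(D^2\psi)_{\mathrm s}$, whose second term is nonnegative definite pointwise, so in the matrix sense $\nabla^2\psi_n\ge\rho_n*\nabla^2\psi$ with $\rho_n*\nabla^2\psi\to\nabla^2\psi$ a.e.; moreover convexity together with (\ref{eq3.1}) gives $|\psi_n(x)|+|\nabla\psi_n(x)|\le C(1+|x|)^{p}$ uniformly in $n$. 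Applying It\^o's formula for $C^2$ functions to $\psi_n(X)$ on $(\Omega,(\FF^s_t),P_{s,x})$ (with the standard localisation) and using $d[X^i,X^j]^c_\theta=a_{ij}X^i_\theta X^j_\theta\,d\theta$, which follows from (\ref{eq2.03}) and $\sigma\sigma^*=a$, gives
\[
\psi_n(X_t)-\psi_n(X_s)-\sum_{i=1}^d\int_s^t\nabla_i\psi_n(X_{\theta-})\,dX^i_\theta=A^{s,a,n}_t+J^{s,n}_t=:V^{s,n}_t,\qquad t\in[s,T],
\]
where $A^{s,a,n}_t=\frac12\int_s^t\sum_{i,j}a_{ij}X^i_\theta X^j_\theta\nabla^2_{ij}\psi_n(X_\theta)\,d\theta$ and $J^{s,n}$ is $J^s$ with $\psi$ replaced by $\psi_n$. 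Both are nonnegative and increasing: the integrand of $A^{s,a,n}$ equals $\frac12\,\tr(D_\theta aD_\theta\,\nabla^2\psi_n(X_\theta))\ge0$ with $D_\theta=\mathrm{diag}(X_\theta)$ (as $D_\theta aD_\theta\ge0$ and $\nabla^2\psi_n\ge0$), and each summand of $J^{s,n}$ is the nonnegative gap between the graph of $\psi_n$ and its tangent plane at $X_{\theta-}$.

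Now let $n\to\infty$. First, $V^{s,n}\to V^s$ uniformly on $[s,T]$ in $P_{s,x}$-probability: the boundary terms converge by local-uniform convergence of $\psi_n$, and the stochastic integrals by the dominated-convergence theorem for stochastic integrals, because $\nabla_i\psi_n(X_{\theta-})\to\nabla_i\psi(X_{\theta-})$ for a.e. $(\theta,\omega)$ — there are only countably many jumps, and for each fixed $\theta$ the law of $X_\theta$ under $P_{s,x}$ is absolutely continuous by Remark \ref{rem2.1}(ii) (where $\det a>0$ is used), hence $X_\theta$ a.s. avoids the null set where $\rho_n*\nabla\psi\not\to\nabla\psi$ — while these integrands are dominated, uniformly in $n$, by the locally bounded predictable process $C(1+|X_{\theta-}|)^{p}$. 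Second, for $s<\sigma\le\tau\le T$, using $\nabla^2\psi_n\ge\rho_n*\nabla^2\psi$, $D_\theta aD_\theta\ge0$ and $J^{s,n}\ge0$,
\[
V^{s,n}_\tau-V^{s,n}_\sigma\ \ge\ A^{s,a,n}_\tau-A^{s,a,n}_\sigma\ \ge\ \tfrac12\int_\sigma^\tau\sum_{i,j}a_{ij}X^i_\theta X^j_\theta\,(\rho_n*\nabla^2\psi)_{ij}(X_\theta)\,d\theta ,
\]
and the right-hand side converges in $L^1(P_{s,x})$ to $A^{s,a}_\tau-A^{s,a}_\sigma$: by Fubini and absolute continuity of the marginals its $L^1$-distance to the limit is at most $\frac12\int_\sigma^\tau\!\int|y|^2\,|\rho_n*\nabla^2\psi-\nabla^2\psi|(y)\,p(\theta-s,x,y)\,dy\,d\theta$, the inner integral tending to $0$ for each $\theta>s$ by $L^1_{\mathrm{loc}}$-convergence of $\rho_n*\nabla^2\psi$, equi-integrability at infinity (polynomial growth of $|D^2\psi|$ against the fast decay of $p$, the latter a consequence of the nondegenerate Gaussian component and (\ref{eq2.06})) and $X_\theta$ being a.s. a Lebesgue point of $\nabla^2\psi$, while the outer integral is bounded uniformly in $n$ after interchanging $d\theta$ and $dy$ (using $\int_\sigma^\tau p(\theta-s,x,y)\,d\theta\le C_\sigma(1+|y|)^{-N}$, valid for $\sigma>s$, and the growth of $|\nabla^2\psi|$). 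Passing to a subsequence along which these convergences hold $P_{s,x}$-a.s. (for $V^{s,n}$ uniformly, and for the integral above for every $(\sigma,\tau)$ in a countable dense set) we obtain $V^s_\tau-V^s_\sigma\ge A^{s,a}_\tau-A^{s,a}_\sigma$ for such $(\sigma,\tau)$, hence for all $s<\sigma\le\tau\le T$ by right-continuity of $V^s$ and continuity of $A^{s,a}$, which is the lemma.

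The only real obstacle is this final limit passage: both dominated-convergence steps require majorants uniform in $n$, and that is exactly why one keeps the polynomial bound on $\nabla\psi_n$ (preserved by mollification) together with the moment condition (\ref{eq2.06}) for the stochastic integrals, and why one reduces to $\sigma>s$ for the Hessian term so that the $O((\theta-s)^{-1/2})$ singularity of $p$ at $\theta=s$ is skirted. It is worth noting that the argument never identifies $A^s-A^{s,a}$ (morally a local-time functional of the singular part of $D^2\psi$): the singular part enters only through the one-sided bound $\nabla^2\psi_n\ge\rho_n*\nabla^2\psi$, which is what keeps the proof elementary; the It\^o expansion, the sign of $A^{s,a,n}$ and $J^{s,n}$, and the quadratic-variation identity are routine.
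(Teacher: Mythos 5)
Your overall strategy is the same as the paper's: mollify, apply It\^o's formula to $\psi_n(X)$, use that the mollified singular part of $D^2\psi$ is nonnegative definite to get the one-sided matrix bound $\nabla^2\psi_n\ge\rho_n*\nabla^2\psi$ (this is exactly (\ref{eq3.30})), and pass to the limit. The genuine gap is in how you execute the final limit passage — which you yourself flag as ``the only real obstacle''. You insist on two-sided $L^1(P_{s,x})$ convergence of $\frac12\int_\sigma^\tau\sum a_{ij}X^i X^j(\rho_n*\nabla^2\psi)_{ij}(X_\theta)\,d\theta$ to $A^{s,a}_\tau-A^{s,a}_\sigma$, and the uniform-in-$n$ domination you invoke rests on claims that do not follow from the hypotheses: ``fast decay of $p$'' and the bound $\int_\sigma^\tau p(\theta-s,x,y)\,d\theta\le C_\sigma(1+|y|)^{-N}$ are not consequences of the nondegenerate Gaussian component together with (\ref{eq2.06}) — that condition only yields moments of order $(1\vee p)+\varepsilon$ for $X_\theta$, and the spatial tails of $p$ are governed by the L\'evy measure, not by $a$; likewise ``polynomial growth of $|D^2\psi|$'' is not a pointwise bound for a general convex $\psi$ satisfying (\ref{eq3.1}) (only the mass of the Hessian measure on balls grows polynomially). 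The preliminary finiteness claim $E_{s,x}A^{s,a}_t<\infty$ with the inner integral ``$O((\theta-s)^{-1/2})$ because the singular set has locally finite $(d-1)$-dimensional measure'' is also unsupported: the non-differentiability set of $\psi$ has nothing to do with the size of the density $\nabla^2\psi$ of the absolutely continuous part of $D^2\psi$, which is merely $L^1_{loc}$. None of this is needed, because only a one-sided bound is required: either apply Fatou pathwise in $\theta$ (the integrand is nonnegative, and $(\rho_n*\nabla^2_{ij}\psi)(X_\theta)\to\nabla^2_{ij}\psi(X_\theta)$ for a.e.\ $(\theta,\omega)$ since the marginals are absolutely continuous), or do as the paper does: test both additive functionals against $e^{-\alpha(t-s)}f(X_t)$ with nonnegative $f\in C_c$, use Fatou and absolute continuity of the laws, and then upgrade the resulting inequality between $\alpha$-potentials to pathwise monotonicity by the comparison argument of Revuz--Yor, Proposition X.1.7. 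That cutoff-and-potential route is precisely what lets the paper avoid any global integrability of the Hessian against $p$, and finiteness of $A^{s,a}$ then comes out of the conclusion ($A^{s,a}\le V^s$) rather than being assumed.

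A secondary, repairable imprecision: your appeal to the dominated convergence theorem for stochastic integrals uses only $d\theta\otimes P_{s,x}$-a.e.\ convergence of $\nabla_i\psi_n(X_{\theta-})$, justified via fixed-time marginals and ``countably many jumps''; but the quadratic variation of the purely discontinuous martingale part charges the random jump times, which that argument does not reach. It can be fixed because the predictable brackets of $M^{c,i}$ and $M^{d,i}$ (and the drift) are absolutely continuous in $t$, so after localization the bracket/isometry estimates only see the integrands $d\theta\otimes P_{s,x}$-a.e.; alternatively one can simply quote Carlen and Protter (1992, Theorem 2), which is how the paper obtains (\ref{eq5.5}). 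With those two repairs your proof becomes a slightly more pathwise variant of the paper's argument; as written, however, the limit passage for the Hessian term does not stand.
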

\begin{proof}
Let $\{\rho_{\varepsilon}\}_{\varepsilon>0}$ be some family of
mollifiers and let
\[
\psi_{\varepsilon}=\psi*\rho_{\varepsilon}, \qquad
\gamma^{\varepsilon}(dx) =\frac12\sum^d_{i,j=1}a_{ij}x_ix_j\,
\nabla^2_{ij}\psi_{\varepsilon}(x)\,dx.
\]
By $J^{s,\varepsilon}$, denote the process defined as $J^s$ but
with $\psi$ replaced by $\psi_{\varepsilon}$, and set
\[
A^{s,\varepsilon}_t=\frac12\sum^d_{i,j}\int^t_sa_{ij}X^i_{\theta}
X^j_{\theta}\nabla^2_{ij}\psi_{\varepsilon}(X_{\theta})\,d\theta,
\quad t\ge s.
\]
By It\^o's formula,
\[
\psi_{\varepsilon}(X_t)=\psi_{\varepsilon}(X_s)+\sum_{i=1}^d\int^t_s
\nabla_i\psi_{\varepsilon}(X_{\theta-})\,dX^i_{\theta}
+V^{s,\varepsilon}_t,\quad t\ge s,
\]
where
\[
V^{s,\varepsilon}_{t}=\frac12\int^{t}_sa_{ij}X^i_{\theta}X^j_{\theta}
\nabla^2_{ij}\psi_{\varepsilon}(X_{\theta})\,d\theta+J^{s,\varepsilon}_{t}
=A^{s,\varepsilon}_t+J^{s,\varepsilon}_t.
\]
Let $T>s$. By Carlen and Protter (1992, theorem 2) and the remarks preceding it, there exist stopping times $\tau_R$ increasing to infinity
$P_{s,x}$-a.s. as $R\uparrow\infty$ such that
$|X_{\cdot\wedge\tau_R-}|\le R$, $V^s_{\cdot\wedge\tau_R-}\le R$
and
\begin{equation}
\label{eq5.4} E_{s,x}\sup_{s\le t\le
T}|\sum_{i=1}^d\int^{t\wedge\tau_R-}_s
\nabla_i(\psi_{\varepsilon}-\psi)(X_{\theta})
(r-\delta_i)X^i_{\theta}\,d\theta
+V^{s,\varepsilon}_{t\wedge\tau_R-}-V^s_{t\wedge\tau_R-}|\rightarrow0
\end{equation}
as $\varepsilon\downarrow0$. Clearly,
\begin{align}
\label{eq5.6} &E_{s,x}\sup_{s\le t\le T}
|\sum_{i=1}^d\int^{t\wedge\tau_R-}_s
\nabla_i(\psi_{\varepsilon}-\psi)
(X_{\theta})(r-\delta_i)X^i_{\theta}\,d\theta| \\
&\qquad\le\sum^d_{i=1}\int^T_s\!\!\int_{B_R}
|\nabla_i(\psi_{\varepsilon}-\psi)(y)(r-\delta_i)y_i|p(t-s,x,y)\,dy.
\nonumber
\end{align}
By Carlen and Protter (1992, lemma), $\sup_{\varepsilon>0}\sup_{|y|\le R}
|\nabla\psi_{\varepsilon}(y)|<\infty$, while  from the proof of
Rockafellar (1970, theorem 25.7) and the fact that $\psi$ is a.e.
differentiable it follows that
$\nabla_i\psi_{\varepsilon}(y)\rightarrow\nabla_i\psi(y)$ for a.e.
$y\in\BR^d$ (see  Grinberg (2013, theorem 4)). Therefore, applying the
Lebesgue dominated convergence theorem shows that the right-hand
side of (\ref{eq5.6}) converges to zero as
$\varepsilon\downarrow0$.  By this and (\ref{eq5.4}),
\begin{equation}
\label{eq5.5} E_{s,x}\sup_{s\le t\le T}
|V^{s,\varepsilon}_{t\wedge\tau_R-}-V^s_{t\wedge\tau_R-}|\rightarrow0.
\end{equation}
As $\psi_{\varepsilon}$ is convex, $J^{s,\varepsilon}$ is an
increasing process. Therefore, for all  $R>0$, $\alpha>0$  and
nonnegative $f\in C_c(\BR^d)$,
\begin{align}
\label{eq3.28} L^{\varepsilon}_R(x)&:=E_{s,x}
\int^{T\wedge\tau_R-}_se^{-\alpha(t-s)}
f(X_t)\,dV^{s,\varepsilon}_t\\
&\ge E_{s,x}\int^{T\wedge\tau_R-}_se^{-\alpha(t-s)}
f(X_t)\,dA^{s,\varepsilon}_t=:P^{\varepsilon}_R(x).\nonumber
\end{align}
By (\ref{eq5.5}) and the Lebesgue dominated convergence theorem,
\begin{equation}
\label{eq3.29} \lim_{\varepsilon\downarrow0}L^{\varepsilon}_R(x)
=E_{s,x}\int^{T\wedge\tau_R-}_se^{-\alpha(t-s)} f(X_t)\,dV^s_t.
\end{equation}
Let $\mu_{ij}=\mu^a_{ij}+\mu^s_{ij}$ be the Lebesgue decomposition
of the measure $\mu_{ij}$ into the absolutely continuous and
singular parts, i.e., $\mu^a_{ij}\ll m$ and $\mu^s_{ij}\bot m$. Set
$[D^2\psi]=\{\mu_{ij}\}_{i,j=1,\dots,d}$ and
$[D^2\psi]_a=\{\mu^a_{ij}\}_{i,j=1,\dots,d}$\,,
$[D^2\psi]_s=\{\mu^s_{ij}\}_{i,j=1,\dots,d}$\,, so that
$[D^2\psi]=[D^2\psi]_a+[D^2\psi]_s$. From the fact that the
matrix-valued measure $[D^2\psi]_s$ is concentrated on some set
$S$ such that $m(S)=0$ and $[D^2\psi]$ is nonnegative definite, it
follows that $[D^2\psi]_s$ is nonnegative definite. Hence, the
matrix-valued measure $[D^2\psi]_s*\rho_{\varepsilon}
=\{\mu^s_{ij}*\rho_{\varepsilon}\}_{i,j=1,\dots,d}$ is nonnegative
definite because, for every Borel set $B\subset\BR^d$ and every
$z=(z_1,\dots,z_d)\in\BR^d$, we have
\begin{align*}
\sum^d_{i,j=1}\mu^s_{ij}*\rho_{\varepsilon}(B)z_iz_j
&=\sum^d_{ij=1}\Big(\int_{\BR^d}\mu^s_{ij}((B-x)
\rho_{\varepsilon}(x)\,dx\Big)z_iz_j\\
&=\int_{\BR^d}\Big(\sum^d_{ij=1}\mu^s_{ij}((B-x)\cap S)
z_iz_j\Big)\rho_{\varepsilon}(x)\,dx\ge0,
\end{align*}
the last inequality being a consequence of the fact that
$[D^2\psi]_s$ is nonnegative definite. Because
$[D^2\psi_{\varepsilon}]=[D^2\psi]*\rho_{\varepsilon} =
[D^2\psi]_a*\rho_{\varepsilon}+[D^2\psi]_s*\rho_{\varepsilon}$ and
by Theorem 1 in Section 6.4 in Evans and Gariepy (1992), the density of
$[D^2\psi]_a$ is given by the Hessian matrix
$H=\nabla^2_{ij}\psi$, it follows that
\begin{equation}
\label{eq3.30} \gamma^{\varepsilon}(dx) \ge \frac12\sum^d_{i,j=1}
a_{ij}x_ix_j((\nabla^2_{ij}\psi))*\rho_{\varepsilon}(x)\,dx.
\end{equation}
For $t\ge s$, set
\[
B^{s,\varepsilon}_t=\frac12\sum^d_{i,j=1}\int^t_sa_{ij}X^i_{\theta}
X^j_{\theta}((\nabla^2_{ij}\psi)*\rho_{\varepsilon})(X_{\theta})\,d\theta
\]
and
\[
A^{s,\varepsilon,k}_t=\int^t_s\fch_{\{|X_{\theta}|\le
k\}}\,dA^{s,\varepsilon}_{\theta}, \qquad
B^{s,\varepsilon,k}_t=\int^t_s\fch_{\{|X_{\theta}|\le
k\}}\,dB^{s,\varepsilon}_{\theta}.
\]
If $\alpha>0$, then
$E_{s,x}\int^{\infty}_se^{-\alpha(t-s)}\,d(A^{s,\varepsilon,k}_t
+B^{s,\varepsilon,k}_t)<\infty$, so from (\ref{eq3.30}) it follows
that, for every  $k>0$ and every nonnegative $f\in C_c(\BR^d)$,
\[
E_{s,x}\int^{\infty}_se^{-\alpha(t-s)}
f(X_t)\,dA^{s,\varepsilon,k}_t\ge
E_{s,x}\int^{\infty}_se^{-\alpha(t-s)}
f(X_t)\,dB^{s,\varepsilon,k}_t.
\]
Therefore, from the proof of Revuz and Yor (1991, proposition X.1.7), it
follows that $A^{s,\varepsilon,k}-B^{s,\varepsilon,k}$ is
increasing for $k>0$. Letting $k\rightarrow\infty$ shows that the
process $A^{s,\varepsilon}-B^{s,\varepsilon}$ is increasing.
Consequently,
\[
P^{\varepsilon}_R(x)\ge
E_{s,x}\int^{T\wedge\tau_R}_se^{-\alpha(t-s)} f(X_t)\,
dB^{s,\varepsilon}_t.
\]
Let $g_{ij}(y)=\liminf_{\varepsilon\downarrow0}
(\nabla^2_{ij}\psi)*\rho_{\varepsilon}(y)$, $y\in\BR^d$. Applying
Fatou's lemma and the monotone convergence theorem, we conclude
from the above inequality that
\[
\liminf_{\varepsilon\downarrow0}P^{\varepsilon}_R(x)\ge
E_{s,x}\int^{T\wedge\tau_R}_se^{-\alpha(t-s)} f(X_t)
\frac12\sum^d_{ij=1}a_{ij}X^i_tX^j_tg_{ij}(X_t)\,dt.
\]
Because
$(\nabla^2_{ij}\psi)*\rho_{\varepsilon}\rightarrow\nabla^2_{ij}\psi$,
$m$-a.e. as $\varepsilon\downarrow0$, $g_{ij}=\nabla^2_{ij}\psi$,
$m$-a.e. From this and the fact that $p(t,x,\cdot)\ll m$, it
follows that, on the right-hand side of the above inequality, we may
replace $g_{ij}$ by $\nabla^2_{ij}\psi$.
Thus,
\[
\liminf_{\varepsilon\downarrow0}P^{\varepsilon}_R(x)\ge
E_{s,x}\int^{T}_se^{-\alpha(t-s)}f(X_t)\,dA^{s,a}_t.
\]
Combining this with (\ref{eq3.28}) and (\ref{eq3.29}), we get
\[
E_{s,x}\int^{T\wedge\tau_{R-}}_se^{-\alpha(t-s)} f(X_t)\,dV^s_t\ge
E_{s,x}\int^{T\wedge\tau_{R-}}_se^{-\alpha(t-s)}
f(X_t)\,dA^{s,a}_t.
\]
Letting $T\rightarrow\infty$ in the above inequality yields
\[
E_{s,x}\int^{\infty}_se^{-\alpha(t-s)}
f(X_t)\,dV^s_{t\wedge\tau_{R-}}\ge
E_{s,x}\int^{\infty}_se^{-\alpha(t-s)}
f(X_t)\,dA^{s,a}_{t\wedge\tau_{R-}}
\]
for any $x\in D$ and nonnegative $f\in C_c(\BR^d)$. From this and
arguments from the proof of  Revuz and Yor (1991, proposition X.1.7), it
follows that, for every $R>0$, the process
$V^{s,a}_{t\wedge\tau_{R-}}-A^{s,a}_{t\wedge\tau_{R-}}$ is
increasing, and hence that $V^s-A^{s,a}$ is increasing.
\end{proof}

\begin{remark}
If $\nu=0$, then $A^s_t=A^{s,a}_t$, $t\in[s,T]$ (see
Klimsiak and Rozkosz (2016)).
\end{remark}

\begin{definition} (a) We say that a pair $(u,\mu)$, where
$u\in\WW^{0,1}_{\varrho}\cap C([0,T]\times\BR^d)$ and $\mu$ is a
Radon measure on $Q_{T}$, is a variational solution of  problem
(\ref{eq4.3}) if
\[
u(T)=\psi,\quad u\ge\psi,\quad
\int_{Q_T}(u-\psi)\varrho^2\,d\mu=0
\]
and the equation
\[
\partial_su+L u=ru-\mu
\]
is satisfied in the weak sense, i.e., for every $\eta\in
C_{c}^{\infty}(Q_{T})$,
\[
\int^T_0\langle\partial_tu(t),\eta(t)\rangle\,dt
+\int^T_0B_{\varrho}(u(t),\eta(t))\,dt=r\int_{Q_T}
u\eta\varrho^2\,dt\,dx-\int_{Q_{T}}\eta\varrho^{2}\,d\mu,
\]
where $\langle\cdot,\cdot\rangle$ denotes the duality pairing
between $H^{-1}_{\varrho}$ and $H^1_{\varrho}$.
\smallskip\\
(b) If $\mu$ in the above definition admits a density (with
respect to the Lebesgue measure) of the form
$\Psi_u(t,x)=\Psi(t,x,u(t,x))$ for some measurable
$\Phi:Q_T\times\BR\rightarrow\BR_+$, then we say that $u$ is a
variational solution, in the space $\WW^{0,1}_{\varrho}$, to the
semilinear problem
\begin{equation}
\label{eq3.14}
\partial_su+L u=ru-\Phi_u,\quad u(T,\cdot)=\psi,\quad
u\ge\psi.
\end{equation}
\end{definition}

Let
\[
\LL_{BS}=\frac12\sum^d_{i,j=1}a_{ij}x_ix_j\nabla^2_{ij}
+\sum^d_{i=1}(r-\delta_i)x_i\nabla_i
\]
and for $\psi:\BR^d\rightarrow\BR$ such that $\psi_{|D_{\iota}}$
is convex for every $\iota\in I$ set
\begin{equation}
\label{eq3.7} \Psi=-r\psi+\LL_{BS}\psi.
\end{equation}
In the sequel, $\Psi^{-}$ stands for $(-\Psi)\vee0$.

\begin{theorem}
\label{th5.3} Let  $\beta>p$ and let $(a,\nu,\gamma)$ satisfy
\mbox{\rm(\ref{eq2.2}), (\ref{eq2.02}), (\ref{eq2.06}),
(\ref{eq4.33})}. Assume that  $\psi$ is a measurable function
satisfying \mbox{\rm(\ref{eq3.1})} and $\tilde\psi\in\tilde
H^1_{\rho}$, where $\tilde\psi$ is defined by
\mbox{\rm(\ref{eq4.22})}. Moreover, assume that, for every
$\iota\in I$, the restriction of $\psi$ to $D_{\iota}$ is a convex
function, which can be extended to a finite convex function on all
of $\BR^d$, and that
\begin{equation}
\label{eq5.7} \Psi^{-}\in L^2_{\varrho}.
\end{equation}
Then, $u$ defined by \mbox{\rm(\ref{eq2.09})} has the following
properties.
\begin{enumerate}
\item[\rm(i)]$u\in W^{1,2}_{\varrho}$ and $u$ is a unique
variational solution of the problem
\begin{equation}
\label{eq3.6}
\partial_tu+L u=ru+\fch_{\{u=\psi\}}\fch_{\{\Psi<0\}}(-\Psi^{-}+L_Iu),\quad
u(T)=\psi,\quad u\ge\psi,
\end{equation}
where $\Psi$ is defined by \mbox{\rm(\ref{eq3.7})}.
\item[\rm(ii)]Let $Y_t=u(t,X_t)$, $t\in[0,T]$, and let $M$
be a c\`adl\`ag martingale defined as
\begin{align*}
M_t&=E_{s,x}\Big(\psi(X_T) +\int^T_s\{-ru(\theta,X_{\theta})\\
&\quad+\fch_{\{u(\theta,X_{\theta})=\psi(X_{\theta})\}}
\fch_{\{\Psi(X_{\theta})<0\}}
(\Psi^{-}-L_Iu)(\theta,X_{\theta})\}\,d\theta\big|\FF^s_t\Big)
-u(s,X_s)
\end{align*}
for $t\in[s,T]$. Then, for every $(s,x)\in[0,T)\times P$, the pair
$(Y,M)$ is a solution of the BSDE
\begin{align*}
Y_t&=\psi(X_T)-\int^T_trY_{\theta}\,d\theta
-\int^T_t\fch_{\{u(\theta,X_{\theta})=\psi(X_{\theta})\}}
\fch_{\{\Psi(X_{\theta})<0\}}
(\Psi^{-}-L_Iu)(\theta,X_{\theta})\,d\theta\nonumber\\
&\quad-\int^T_tdM_{\theta},\quad t\in[s,T],\quad
P_{s,x}\mbox{\rm-a.s.}
\end{align*}
\end{enumerate}
\end{theorem}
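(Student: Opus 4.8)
The plan is to identify the value function $u$ of (\ref{eq2.09}) with the variational solution of the complementarity problem (\ref{eq1.4}), (\ref{eq1.5}), to prove that the corresponding measure $\mu$ is absolutely continuous with an $L^{2}(0,T;L^{2}_{\varrho})$ density, and finally to compute that density. I would start from the facts already established: by Theorem \ref{th3.1}, $u$ is continuous, $u(t,X_{t})=Y^{s,x}_{t}$ solves the reflected BSDE (\ref{eq3.06}), (\ref{eq2.16}) holds, $K^{s}$ is continuous and $E_{s,x}K^{s}_{T}<\infty$; by Lemma \ref{lem3.3} there is a positive Radon measure $\mu$ on $(0,T)\times D$ representing $K^{s}$, and together with (\ref{eq2.16}), (\ref{eq3.25}) this gives
\[
u(s,x)=E_{s,x}\psi(X_{T})-rE_{s,x}\!\int^{T}_{s}u(t,X_{t})\,dt
+\int^{T}_{s}\!\!\int_{D}p(t-s,x,y)\,d\mu(t,y),\qquad (s,x)\in[0,T)\times D .
\]
The complementarity relations $u(T)=\psi$, $u\ge\psi$ and $\int_{Q_{T}}(u-\psi)\varrho^{2}\,d\mu=0$ follow from (\ref{eq3.06}), from $Y^{s,x}\ge\psi(X)$, and from the Skorokhod condition on $K^{s}$ via Lemma \ref{lem3.3} (strict positivity of $p$ on $(0,T)\times D_{\iota}\times D_{\iota}$ lets one read off pointwise facts from $P_{s,x}$-a.s.\ ones).

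The decisive step — and the main obstacle — is to show that $\mu$ is absolutely continuous, $\mu=g_{0}\,dt\,dy$ with $0\le g_{0}\in L^{2}(0,T;L^{2}_{\varrho})$. I would use the penalized solutions $u_{n}\uparrow u$, $K^{s,n}\to K^{s}$, $\mu_{n}=n(u_{n}-\psi)^{-}\,dt\,dy\rightharpoonup\mu$ already present in the proofs of Theorem \ref{th3.1} and Lemma \ref{lem3.3}, and prove the uniform bound $\sup_{n}\|n(u_{n}-\psi)^{-}\|_{L^{2}(0,T;L^{2}_{\varrho})}<\infty$; this is exactly where convexity of $\psi$ and hypothesis (\ref{eq5.7}) are essential. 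Testing the penalized equation $-\partial_{s}u_{n}-Lu_{n}+ru_{n}=n(u_{n}-\psi)^{-}$ against $(u_{n}-\psi)^{-}\varrho^{2}$ one must compare the penalty with $-\partial_{s}\psi-L\psi+r\psi$; since $\psi$ is only convex one mollifies it and passes to the limit, using Lemma \ref{lem3.2} for the (favorably signed) singular part of $D^{2}\psi$ and the convexity of $\psi$ for $L_{I}\psi\ge0$, so that $-\partial_{s}\psi-L\psi+r\psi\le-\Psi-L_{I}\psi\le-\Psi\le\Psi^{-}$ distributionally, the remaining lower-order terms being absorbed via the coercivity estimates of Propositions \ref{prop4.1}, \ref{prop4.3}. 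A convenient device is the function $v\in W^{1,2}_{\varrho}$ solving the linear problem $\partial_{s}v+Lv=rv-\Psi^{-}$, $v(T)=\psi$ (Proposition \ref{prop4.9} applies since $\Psi^{-}\in L^{2}_{\varrho}$): by the same convexity facts $v\ge\psi$ and $v$ is a supersolution of (\ref{eq4.3}), and $u\le v$ (which follows probabilistically: writing $e^{-r(t-s)}\psi(X_{t})$ via Bouleau's formula and using $L_{I}\psi\ge0$, Lemma \ref{lem3.2}, and $(\Psi+L_{I}\psi)^{-}\le\Psi^{-}$), so $v-u_{n}\ge0$ and the equation satisfied by $v-u_{n}$ delivers the wanted control. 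Granting the bound, $\{n(u_{n}-\psi)^{-}\}$ has a weak limit $g_{0}$ in $L^{2}(0,T;L^{2}_{\varrho})$, which by $\mu_{n}\rightharpoonup\mu$ equals the density of $\mu$, and $0\le g_{0}\le\Psi^{-}$.

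With $\mu=g_{0}\,dt\,dy$ the displayed representation of $u$ is exactly (\ref{eq3.8}) with source $g_{0}$, and since $E_{s,x}\int^{T}_{s}g_{0}(t,X_{t})\,dt=E_{s,x}K^{s}_{T}<\infty$, $\tilde\psi\in\tilde H^{1}_{\rho}$ and $\nu$ satisfies (\ref{eq4.33}), Proposition \ref{prop4.9}(ii),(iii) gives $u\in W^{1,2}_{\varrho}$ and $\partial_{s}u+Lu=ru-g_{0}$ a.e.\ in $Q_{T}$. To compute $g_{0}$: $u$ now lying in $W^{1,2}_{\varrho}$, the It\^o--Krylov formula applies to $u(t,X_{t})$, and comparison with (\ref{eq2.16}) gives $dK^{s}_{t}=(ru-\partial_{s}u-Lu)(t,X_{t})\,dt=g_{0}(t,X_{t})\,dt$; the Skorokhod condition (and strict positivity of $p$) forces $g_{0}=0$ a.e.\ on $\{u>\psi\}$. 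On the coincidence set $\{u=\psi\}$, $u$ and $\psi$ agree, so their first derivatives and the absolutely continuous parts of their second derivatives agree a.e.; hence a.e.\ there $\partial_{s}u=0$, $L_{BS}u=\LL_{BS}\psi$, and $g_{0}=r\psi-\LL_{BS}\psi-L_{I}u=-\Psi-L_{I}u$. Convexity of $\psi$ and $u\ge\psi$ give $L_{I}u\ge0$ a.e.\ on $\{u=\psi\}$ (for $x\in D_{\iota}$ the point $xe^{y}$ lies in the same orthant $D_{\iota}$, where $\psi$ is convex, so $\psi(xe^{y})-\psi(x)-\nabla\psi(x)\cdot(xe^{y}-x)\ge0$, and $u(t,xe^{y})\ge\psi(xe^{y})$); with $g_{0}\ge0$ this forces $\Psi\le0$ a.e.\ on $\{u=\psi\}$, so $g_{0}=\fch_{\{u=\psi\}}\fch_{\{\Psi<0\}}(\Psi^{-}-L_{I}u)$ a.e.\ in $Q_{T}$, which is (\ref{eq3.6}); this proves (i). Uniqueness follows by the same argument: a variational solution of (\ref{eq3.6}) is in $W^{1,2}_{\varrho}$ and, via It\^o--Krylov, yields a solution of the reflected BSDE (\ref{eq3.06}) (the increasing part $\int_{s}^{\cdot}\fch_{\{\bar u=\psi\}}\fch_{\{\Psi<0\}}(\Psi^{-}-L_{I}\bar u)(\theta,X_{\theta})\,d\theta$ charges only $\{\bar u(X)=\psi(X)\}$, so the Skorokhod condition holds), which is unique by Theorem \ref{th3.1}, whence $\bar u=u$. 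Finally, part (ii) is obtained by rewriting (\ref{eq2.16}) with $K^{s}_{t}=\int^{t}_{s}g_{0}(\theta,X_{\theta})\,d\theta$, the martingale there coinciding with the process $M$ of part (ii).
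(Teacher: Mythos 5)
Your endgame coincides with the paper's (Skorokhod condition kills the density off $\{u=\psi\}$; approximate-derivative identities of Evans--Gariepy type on $\{u=\psi\}$ give $\partial_t u=0$, $\LL_{BS}u=\LL_{BS}\psi$ a.e.\ and hence the source $\fch_{\{u=\psi\}}\fch_{\{\Psi<0\}}(\Psi^--L_Iu)$; part (ii) then follows from the representation), but your decisive middle step is genuinely different. The paper obtains the absolute continuity and the bound on the density \emph{pathwise}: writing $Y-\psi(X)$ via Bouleau's formula (\ref{eq3.20}) and the SDE (\ref{eq2.03}), applying the Meyer--Tanaka formula to $(Y-\psi(X))^{+}=Y-\psi(X)$, and invoking Lemma \ref{lem3.2} (the process $V^s-A^{s,a}$ is increasing) together with the compensator and the uniqueness of the special semimartingale decomposition, it gets directly $0\le dK^s_t\le\fch_{\{u(t,X_t)=\psi(X_t)\}}\Psi^{-}(X_t)\,dt$, so that by Lemma \ref{lem3.3} $\mu=\alpha\fch_{\{u=\psi\}}\Psi^{-}\,dt\,dy$ with $0\le\alpha\le1$ and square integrability of the density is inherited from (\ref{eq5.7}) without any PDE estimate. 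Your route --- a uniform $L^2(0,T;L^2_{\varrho})$ bound on the penalty $n(u_n-\psi)^{-}$ exploiting the favourable sign of the singular part of $D^2\psi$ and of $L_I\psi$ --- is the classical Bensoussan--Lions/Jaillet--Lamberton--Lapeyre alternative and is viable in principle; what it buys is a purely analytic proof of $\mu\ll m$ with $0\le g_0\le\Psi^{-}$, at the price of redoing the penalization energy estimates in the weighted, nonlocal, nonsymmetric setting.

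Two places in your sketch need repair. First, testing the penalized equation against $(u_n-\psi)^{-}\varrho^2$ only yields $\sqrt n\,\|(u_n-\psi)^{-}\|_{L^2(0,T;L^2_{\varrho})}\le C$; to get $\sup_n\|n(u_n-\psi)^{-}\|_{L^2(0,T;L^2_{\varrho})}<\infty$ you must test against $n(u_n-\psi)^{-}\varrho^2$ and use the preliminary $\sqrt n$-estimate to absorb the term $n\,b\,\|(u_n-\psi)^{-}\|^2_{L^2_{\varrho}}$ coming from the G\aa rding inequality; moreover the sign of the nonlocal cross term $B^{I}_{\varrho}((u_n-\psi)^{+},n(u_n-\psi)^{-})$ is not contained in Propositions \ref{prop4.1}, \ref{prop4.3} (which give only continuity and coercivity of the form) and has to be proved separately (it follows from $L_I(u_n-\psi)^{+}\ge0$ a.e.\ on $\{(u_n-\psi)^{-}>0\}$), and the analytic penalized solutions must be identified with the probabilistic $u_n$ entering Lemma \ref{lem3.3} by a Feynman--Kac argument as in Proposition \ref{prop4.9}. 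Second, and more seriously, you invoke an ``It\^o--Krylov formula'' for $u\in W^{1,2}_{\varrho}$ composed with the jump-diffusion $X$, both to identify $dK^s_t=g_0(t,X_t)\,dt$ and in the uniqueness argument; no such formula is available in the paper's toolbox and none is needed: once $\mu=g_0\,dt\,dy$, Lemma \ref{lem3.3} and (\ref{eq3.25}) give $E_{s,x}K^s_T=E_{s,x}\int_s^Tg_0(t,X_t)\,dt$, hence with (\ref{eq3.06}) and Theorem \ref{th3.1} the representation (\ref{eq3.8}) with source $g_0$, and Proposition \ref{prop4.9} then yields $u\in W^{1,2}_{\varrho}$ and the a.e.\ equation (this is exactly how the paper proceeds from its (\ref{eq5.14})); similarly, uniqueness should be argued through Proposition \ref{prop4.9} applied to the Cauchy problem (\ref{eq3.22}) with the identified right-hand side rather than through an It\^o-based verification. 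With these two repairs your plan goes through and constitutes a legitimate alternative proof of the key absolute-continuity step.
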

\begin{proof}
Let $(s,x)\in[0,T)\times D$, $(Y=u(\cdot,X),M^{s},K^{s})$ be a
solution of (\ref{eq3.06}) (see Theorem \ref{th3.1}), and let
$V^s$, $A^{s,a}$ be the processes defined in Lemma \ref{lem3.2}.
Because
\[
Y_t=Y_s+\int^t_srY_{\theta}\,d\theta-\int^t_sdK^{s}_{\theta}
+\int^t_sdM^{s}_{\theta},\quad t\in[s,T],\quad
P_{s,x}\mbox{-a.s.},
\]
it follows from (\ref{eq2.03}) and (\ref{eq3.20}) that
\begin{align}
\label{eq3.2} Y_t-\psi(X_t)&=Y_s-\psi(X_s)-\int^t_s
\{-rY_{\theta}+\LL_{BS}\psi(X_{\theta})\}\,d\theta\\
&\quad-\int^t_sdK^{s}_{\theta}
+\int^t_sd(M^{s}_{\theta}-M^{\psi}_{\theta})
-\int^t_s\,d(V^s_{\theta}-A^{s,a}_{\theta}),\nonumber
\end{align}
where
\[
M^{\psi}_t=\sum^d_{i=1}\int^t_s
\nabla_i\psi(X_{\theta-})\,d(M^{c,i}_{\theta}
+M^{d,i}_{\theta}).
\]
By the Meyer-Tanaka formula (see Protter (2004, theorem IV.68)) and the
fact that $Y_t\ge\psi(X_t)$ for $t\in[s,T]$, we have
\begin{align*}
(Y_t-\psi(X_t))^+-(Y_s-\psi(X_s))^{+}
&=\int^t_s\fch_{\{Y_{\theta-}>\psi(X_{\theta-})\}}\,
d(Y_{\theta}-\psi(X_{\theta}))+B^s_t\nonumber\\
& =(Y_t-\psi(X_t))^{+}-(Y_s-\psi(X_s))^{+}\nonumber\\
&\quad-\int^t_s\fch_{\{Y_{\theta-}=\psi(X_{\theta-})\}}\,
d(Y_{\theta}-\psi(X_{\theta}))+B^s_t,
\end{align*}
where $B^s$ is some adapted c\`adl\`ag increasing process on
$[s,T]$ such that $B^s_s=0$. From the above, it follows that, in
fact,
\[
B^s_t=\int^t_s\fch_{\{Y_{\theta-}=\psi(X_{\theta-})\}}\,
d(Y_{\theta}-\psi(X_{\theta})).
\]
By the above equality and (\ref{eq3.2}),
\begin{align*}
B^s_t&=-\int^t_s\fch_{\{Y_{\theta-}=\psi(X_{\theta-})\}}
(-rY_{\theta}+\LL_{BS}\psi(X_{\theta}))\,d\theta\\
&\quad-\int^t_s\fch_{\{Y_{\theta-}
=\psi(X_{\theta-})\}}\,d(K^{s}_{\theta}+V^s_{\theta}-A^{s,a}_{\theta})
+\int^t_s\fch_{\{Y_{\theta-}=\psi(X_{\theta-})\}}\,
d(M^{s}_{\theta}-M^{\psi}_{\theta}).
\end{align*}
Because
\[
\int^t_s\fch_{\{Y_{\theta-}=\psi(X_{\theta-})\}}\,dK^{s}_{\theta}
=K^{s}_t,
\]
it follows that
\begin{align*}
&K^{s}_t+B^s_t+\int^t_s \fch_{\{Y_{\theta-}
=\psi(X_{\theta-})\}}\,d(V^s_{\theta}-A^{s,a}_{\theta})\\
&\qquad=-\int^t_s\fch_{\{Y_{\theta-}=\psi(X_{\theta-})\}}
(-rY_{\theta}+\LL_{BS}\psi(X_{\theta}))\,d\theta
+\int^t_s\fch_{\{Y_{\theta-}=\psi(X_{\theta-})\}}\,
d(M^{s}_{\theta}-M^{\psi}_{\theta}).
\end{align*}
Write
\[
C^s_t=B^s_t+\int^t_s\fch_{\{Y_{\theta-}
=\psi(X_{\theta-})\}}\,d(V^s_{\theta}-A^{s,a}_{\theta}),\quad
t\in[s,T],
\]
and by $\tilde C^s$ denote the compensator of $C^s$. Then,
\begin{align}
\label{eq3.21} K^{s}_t+\tilde C^s_t
&=-\int^t_s\fch_{\{Y_{\theta-}=\psi(X_{\theta-})\}}
(-rY_{\theta}+\LL_{BS}\psi(X_{\theta}))\,d\theta\\
&\quad+\int^t_s\fch_{\{Y_{\theta-}=\psi(X_{\theta-})\}}\,
d(M^{s}_{\theta}-M^{\psi}_{\theta}) +\tilde C^s_t-C^s_t.
\nonumber
\end{align}
As $K^{s}$ is a continuous increasing process, the left-hand
side of equality (\ref{eq3.21}) is a special semimartingale under
$P_{s,x}$. Similarly, the right-hand side is a special
semimartingale under $P_{s,x}$. Suppose that $x\in D_{\iota}$ for
some $\iota\in I$ and denote by $\psi_{\iota}$ a convex function
on $\BR^d$ whose restriction to $D_{\iota}$ coincides with
$\psi_{|D_{\iota}}$. By Remark \ref{rem2.1}(i), under the measure
$P_{s,x}$, the processes $V^s,A^{s,a}$ defined in Lemma
\ref{lem3.2}  coincide with the processes defined analogously to
$V^s,A^{a,s}$, but  with $\psi$ replaced by $\psi_{\iota}$.
Therefore, by Lemma \ref{lem3.2} applied to $\psi_{\iota}$, the
process $C^s$ is increasing under $P_{s,x}$, and hence $\tilde
C^s$ is also increasing (see Protter, (2004, p. 120)).
Because the decomposition of the special semimartingale is unique
and the process on the left-hand side of (\ref{eq3.21}) is
increasing, it follows from (\ref{eq3.21}) that
\[
dK^{s}_t+d\tilde C^s_t=\fch_{\{Y_{t-}=\psi(X_{t-})\}}
(-rY_{t}+\LL_{BS}\psi(X_{t}))^{-}\,dt.
\]
From the above equality and the fact that $\tilde C_{s,\cdot}$ is
increasing, we conclude that
\begin{align}
\label{eq3.4} 0\le dK^{s}_t&\le\fch_{\{Y_{t-}=\psi(X_{t-})\}}
(-rY_{t}+\LL_{BS}\psi(X_{t}))^{-}\,dt  \\
&=\fch_{\{u(t,X_t)=\psi(X_t)\}}(-r\psi(X_t)+\LL_{BS}\psi(X_{t}))^{-}\,dt.
\nonumber
\end{align}
Define $\Psi$ by (\ref{eq3.7}) and by $\mu$ denote the measure of
Lemma \ref{lem3.3}. By (\ref{eq3.4}) and Lemma \ref{lem3.3}, for
every $f\in C_c((0,T)\times D)$, we have
\begin{align*}
\int^T_0\!\!\int_{D}f(t,y)p(t,x,y)\,d\mu(t,y)
&=E_{0,x}\int^T_0f(t,X_t)\,dK^0_t\\
&\le E_{0,x}\int^T_0 f(t,X_t)
\fch_{\{u(t,X_t)=\psi(X_t)\}}\Psi^{-}(X_t)\,dt\\
&=\int^T_0\!\!\int_{D} f(t,y)p(t,x,y)\,d\nu(t,y),
\end{align*}
where $\nu=\fch_{\{u(t,y)=\psi(y)\}}\Psi^{-}(y)\,dt\,dy$. Let
$x\in D$. Then, $p(\cdot,x,\cdot)>0$  by Remark \ref{rem2.1}, so
from the above inequality it follows that $\mu(B)\le\nu(B)$ for
every Borel set $B\subset(0,T)\times D$. In particular,
$\mu\ll\nu$, so by the Radon-Nikodym theorem, there exists a
nonnegative measurable $\alpha:(0,T)\times\BR^d\rightarrow\BR$
such that
\begin{equation}
\label{eq5.14} d\mu(t,y)=\alpha(t,y)
\fch_{\{u(t,y)=\psi(y)\}}\Psi^{-}(y)\,dt\,dy.
\end{equation}
In fact, as $\mu\le\nu$, we have $\alpha\le1$, $\nu$-a.e., and
hence $\alpha\le1$ a.e. on the set where
$\fch_{\{u=\psi\}}\Psi^{-}>0$. By (\ref{eq3.25}) and
(\ref{eq5.14}),
\[
E_{s,x}K^s_T=E_{s,x}\int^T_s
\alpha(t,X_t)\fch_{\{u(t,X_t)=\psi(X_t)\}}\Psi^{-}(X_t)\,dt
\]
for $(s,x)\in(0,T)\times D$. Therefore, by (\ref{eq3.06}) and
Theorem \ref{th3.1},
\begin{equation}
\label{eq5.15} u(s,x)=E_{s,x}\Big(\psi(X_T)+\int^T_s\{-ru(t,X_t)
+\alpha(t,X_t)\fch_{\{u(t,X_t)=\psi(X_t)\}}\Psi^{-}(X_t)\}\,dt\Big)
\end{equation}
for every $(s,x)\in(0,T)\times D$. Because (\ref{eq5.7}) is
satisfied, it follows from Proposition \ref{prop4.9} that $u$ is a
unique, in the space $\WW^{0,1}_{\varrho}$, solution of the Cauchy
problem
\begin{equation}
\label{eq3.22}
\partial_su+(L_{BS}+L_I) u-ru=-\alpha\fch_{\{u=\psi\}}\Psi^{-},
\quad u(T)=\psi.
\end{equation}
In fact,  $u\in W^{1,2}_{\varrho}$ because $\tilde\psi\in\tilde
H^1_{\rho}$. We now show that
\begin{equation}
\label{eq3.33}
-\alpha\fch_{\{u=\psi\}}\Psi^{-}=\fch_{\{u=\psi\}}
\fch_{\{\Psi<0\}}(-\Psi^{-}+L_Iu)
\quad\mbox{a.e. on } Q_T.
\end{equation}
As $u\in W^{1,2}_{\varrho}$, $u(t,\cdot)\in H^2_{\varrho}$ for
a.e. $t\in(0,T)$. Therefore, by Remark (ii) following Theorem 4 in
Section 6.1 in Evans and Gariepy (1992), the distributional derivatives
$\partial_{x_i}u$, $\partial^2_{x_ix_j}u$ are a.e. equal to the
approximate derivatives $\nabla^{ap}_iu$, $(\nabla^{ap})^2_{ij}u$.
Let $\LL^{ap}_{BS}$ denote the operator defined as $\LL_{BS}$ but
with $\nabla_i$, $\nabla_{ij}$ replaced by $\nabla^{ap}_i$,
$(\nabla^{ap})^2_{ij}$. Then, $u$ is a variational solution of
(\ref{eq3.22}) with $L_{BS}$ replaced by $\LL^{ap}_{BS}$, i.e., a
solution of the problem
\[
\partial_su+\LL^{ap}_{BS}u+L_Iu-ru
=-\alpha\mathbf{1}_{\{u=\psi\}}\Psi^{-},
\quad u(T)=\psi.
\]
Therefore, using the argument from the proof of Proposition
\ref{prop4.9}(iii), we  show that
\begin{equation}
\label{eq4.56}
\partial_su+\LL^{ap}_{BS}u+L_Iu-ru=-\alpha\fch_{\{u=\psi\}}\Psi^{-}
\quad\mbox{a.e. on } Q_T.
\end{equation}
As $\psi$ is convex, $\psi\in BV_{loc}(\BR^d)$ as a locally
Lipschitz continuous function and, by Theorem 3 in Section 6.3 in
Evans and Gariepy (1992), $\psi_{x_i}\in BV_{loc}(\BR^d)$, $i=1,\dots,d$.
Therefore, $\psi$ is twice approximately differentiable a.e. by
Theorem 4 in Section 6.1 in Evans and Gariepy (1992). Therefore, from Theorem 3 in Section 6.1 in Evans and Gariepy (1992), it follows that $\LL^{ap}u=\LL^{ap}\psi$
a.e. on $\{u=\psi\}$. Furthermore, as $\psi$ is convex,
$\LL^{ap}_{BS}\psi=\LL_{BS}\psi$ a.e. on $\BR^d$ by Remark (i)
following Theorem 4 in Section 6.1 in Evans and Gariepy (1992). Furthermore,
as  the functions  $u$ and $(t,x)\mapsto\psi(x)$, together with
their first distributional derivatives, are integrable on each
relatively compact open set of $(0,T)\times D$, it follows from
Theorem 4(iv) in Section 4.2 in Evans and Gariepy (1992) that
$\partial_tu=\partial_t\psi=0$ a.e. on $\{u=\psi\}$. Therefore, by
(\ref{eq4.56}),
\[
\LL_{BS}\psi+L_Iu-r\psi=-\alpha\Psi^{-}\quad\mbox{a.e. on }
\{u=\psi\}.
\]
This and (\ref{eq3.7}) imply that
$-\alpha\fch_{\{u=\psi\}}\Psi^{-}=\fch_{\{u=\psi\}}(\Psi+L_Iu)$
a.e. on  $Q_T$, from which (\ref{eq3.33}) follows. Combining
(\ref{eq3.22}) with (\ref{eq3.33}), we see that $u$ satisfies
(\ref{eq3.6}), which completes the proof of (i). Because
$p(s,x,t,\cdot)\ll m$, it follows from (\ref{eq5.15}) and
(\ref{eq3.33}) that
\begin{align*}
u(s,x)&=E_{s,x}\Big(\psi(X_T)\\
&\quad+\int^T_s\{-ru(t,X_t) +\fch_{\{u(t,X_t)=\psi(X_t)\}}
\fch_{\{\Psi(X_{t})<0\}}(\Psi^{-}-L_Iu)(t,X_t)\}\,dt\Big).
\end{align*}
From this we deduce (ii) (see the beginning of the proof of
Proposition \ref{prop4.9}).
\end{proof}


\begin{remark}
By (\ref{eq3.33}), $ \Psi^{-}-L_Iu\ge0$ a.e. on
$\{u=\psi\}\cap\{\Psi<0\}$.
\end{remark}

\begin{proposition}
\label{prop5.5} Assume that \mbox{\rm(\ref{eq2.2}),
(\ref{eq2.02}), (\ref{eq3.1}), (\ref{eq2.06})}  are  satisfied and
$\psi$ is a continuous function such that $\psi(x)>0$ for some
$x\in D_{\iota}$. Then, for every $s\in[0,T)$, $\{x\in
D_{\iota}:u(s,x)=\psi(x)\}\subset\{x\in D_{\iota}:\psi(x)>0\}$.
\end{proposition}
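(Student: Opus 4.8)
The plan is to argue by contraposition. Fix $s\in[0,T)$; since the standing assumption is $\psi\ge0$, it suffices to take $x\in D_\iota$ with $\psi(x)=0$ and show that $u(s,x)>0$, for then $u(s,x)\neq\psi(x)$, i.e.\ $x$ does not belong to $\{x\in D_\iota:u(s,x)=\psi(x)\}$, which is exactly the inclusion to be proved.

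The only ingredient is a lower bound for $u(s,x)$ coming from the suboptimal choice $\tau\equiv T$ in the representation (\ref{eq2.09}):
\[
u(s,x)\ \ge\ E_{s,x}e^{-r(T-s)}\psi(X_T)\ =\ e^{-r(T-s)}\int_{D_\iota}\psi(y)\,p(T-s,x,y)\,dy .
\]
Here I use that, for $x\in D_\iota$, one has $X_T\in D_\iota$ $P_{s,x}$-a.s.\ (Remark \ref{rem2.1}(i)), and that, since $T-s>0$, the distribution of $X_T$ under $P_{s,x}$ — which by the temporal homogeneity set up in Section \ref{sec2} is the law of $X^{0,x}_{T-s}$ under $P$ — has the density $y\mapsto p(T-s,x,y)$, strictly positive and continuous on $D_\iota$ by Remark \ref{rem2.1}(ii); the argument given there, based on the convolution identity (\ref{eq2.5}) and on the strict positivity of the Gaussian density, works at every positive time, so the borderline case $s=0$ (where $T-s=T$) is covered as well. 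Since $\psi$ is continuous and, by hypothesis, $\psi(x_0)>0$ for some $x_0\in D_\iota$, the set $U=\{y\in D_\iota:\psi(y)>0\}$ is open and non-empty; using $\psi\ge0$ on $\BR^d$ together with $p(T-s,x,\cdot)>0$ on $D_\iota$ gives
\[
\int_{D_\iota}\psi(y)\,p(T-s,x,y)\,dy\ \ge\ \int_{U}\psi(y)\,p(T-s,x,y)\,dy\ >\ 0 ,
\]
whence $u(s,x)>0=\psi(x)$.

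There is no real obstacle here. The two points deserving a word of care are that $E_{s,x}\psi(X_T)<\infty$, so that the displayed lower bound is a genuine finite positive number (this follows from (\ref{eq3.1}) together with the moment condition (\ref{eq2.06}), as recalled right after (\ref{eq2.06})), and the identification, via the definitions of Section \ref{sec2}, of the law of $X_T$ under $P_{s,x}$ with the density $p(T-s,x,\cdot)$. I would also note that the statement asserts only an inclusion; it gives no information on whether the coincidence set $\{x\in D_\iota:u(s,x)=\psi(x)\}$ is itself non-empty.
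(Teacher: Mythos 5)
Your argument is correct and is essentially the paper's own proof (which follows Villeneuve's Proposition 1.1): both rest on the lower bound $u(s,x)\ge e^{-r(T-s)}E_{s,x}\psi(X_T)>0$ obtained from the suboptimal stopping time $\tau\equiv T$, the strict positivity of the transition density on $D_\iota$ from Remark \ref{rem2.1}, and the fact that $\psi$ is continuous, nonnegative and positive somewhere in $D_\iota$. Casting it as a contraposition and noting the $s=0$ borderline case are only cosmetic differences.
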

\begin{proof}
We use the argument from the proof of Villeneuve (1999, proposition 1.1). Fix
$s\in[0,T)$, $x\in D_{\iota}$ and set $O=\{y\in
D_{\iota}:\psi(y)>0\}$. Then, $O$ is a nonempty open set. Because,  by
Remark \ref{rem2.1}, the density of the distribution of $X_T$ under
$P_{s,x}$  is strictly positive on $D_{\iota}$, $P_{s,x}(X_{T}\in
O)>0$. By this and (\ref{eq2.09}), $u(s,x)\ge
e^{-r(T-s)}E_{s,x}\psi(X_T)>0$, from which the proposition
follows.
\end{proof}
\medskip

From Proposition \ref{prop5.5}, it follows that, for nontrivial
$\psi$, we may replace $\fch_{\{u=\psi\}}$ by
$\fch_{\{u=\psi\}\cap\{\psi>0\}}$ in the formulation of Theorem
\ref{th5.3}. This often makes the computation of $\Psi$ easier
because usually $\psi$ is regular on $\{\psi>0\}$ (see examples in
Section \ref{sec6}). Moreover, because in the model (\ref{eq1.1}),
the initial prices are strictly positive, it follows from Remark
\ref{rem2.1}(i) that it suffices to compute $\Psi(x)$ for
$x=(x_1,\dots,x_d)$ such that $x_i>0$, $i=0,\dots,d$.

\section{The early exercise premium formula}
\label{sec6}

Let $\eta$ denote the payoff process for  an American option with
payoff function $\psi$, i.e.,
\[
\eta_t=e^{-r(t-s)}\psi(X_t),\quad t\in[s,T].
\]
Let $V$ be the process defined by (\ref{eq2.08})  and let
$Y_t=u(t,X_t)$, $t\in[s,T]$, where $u$ is defined by
(\ref{eq2.09}). By (\ref{eq2.10}) and (\ref{eq2.11}), the process
\[
\tilde V_t=e^{-r(t-s)}V_t,\quad t\in[s,T]
\]
is an $(\FF^s_t)$-supermartingale under $P_{s,x}$. In fact, by
Klimsiak (2015, lemma 2.8), it is the smallest supermartingale on
$[s,T]$ that dominates the process $\eta$, i.e., $\tilde V$ is the
Snell envelope for $\eta$. Applying Theorem \ref{th5.3}(ii) gives
the following representation for $\tilde V$.

\begin{corollary}
Let the assumptions of  Theorem \ref{th5.3} hold. Then, for all
$s\in[0,T)$ and $x\in D$,
\begin{align}
\label{eq6.1} \tilde
V_t&=E_{s,x}\Big(e^{-r(T-s)}\psi(X_T) \\
&\qquad+\int^T_te^{-r(\theta-s)}
\fch_{\{u(\theta,X_{\theta})=\psi(X_{\theta})\}}\fch_{\{\Psi(X_{\theta})<0\}}
(\Psi^{-}(X_{\theta})-L_Iu(\theta,X_{\theta}))\,d\theta\big|\FF^s_t\Big).
\nonumber
\end{align}
\end{corollary}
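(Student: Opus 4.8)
The plan is to read off \eqref{eq6.1} from Theorem~\ref{th5.3}(ii) by passing to the discounted process, imitating the passage from \eqref{eq3.06} to \eqref{eq2.11} in the proof of Theorem~\ref{th3.1}. Fix $(s,x)\in[0,T)\times D$. By \eqref{eq2.10} we have $V_t=u(t,X_t)$, hence $\tilde V_t=e^{-r(t-s)}u(t,X_t)=e^{-r(t-s)}Y_t$, where $Y_t=u(t,X_t)$ is the process of Theorem~\ref{th5.3}(ii); by that theorem the pair $(Y,M)$ solves, on $(\Omega,(\FF^s_t),P_{s,x})$, the backward equation with terminal value $\psi(X_T)$ whose driver at time $\theta$ equals $-rY_\theta+\fch_{\{u(\theta,X_\theta)=\psi(X_\theta)\}}\fch_{\{\Psi(X_\theta)<0\}}(\Psi^{-}-L_Iu)(\theta,X_\theta)$.

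Next I would set $\bar Y_t=e^{-r(t-s)}Y_t$ and $\bar M_t=\int^t_se^{-r(\theta-s)}\,dM_\theta$ for $t\in[s,T]$ and integrate by parts. Exactly as in the derivation of \eqref{eq2.11} from \eqref{eq3.06} --- the term $-rY$ being absorbed by the discount factor --- one obtains, $P_{s,x}$-a.s.,
\[
\bar Y_t=e^{-r(T-s)}\psi(X_T)+\int^T_te^{-r(\theta-s)}\fch_{\{u(\theta,X_\theta)=\psi(X_\theta)\}}\fch_{\{\Psi(X_\theta)<0\}}(\Psi^{-}-L_Iu)(\theta,X_\theta)\,d\theta-\int^T_t d\bar M_\theta
\]
for $t\in[s,T]$, with $\bar M_s=0$. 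It then remains to take $E_{s,x}(\,\cdot\mid\FF^s_t)$ of this identity and to check that the $\bar M$-term vanishes.

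This last point is the only one requiring an argument. By its definition in Theorem~\ref{th5.3}(ii), $M$ is the Doob martingale closed by the random variable $\psi(X_T)+\int^T_s\{-ru(\theta,X_\theta)+\fch_{\{u=\psi\}}\fch_{\{\Psi<0\}}(\Psi^{-}-L_Iu)(\theta,X_\theta)\}\,d\theta$, which is $P_{s,x}$-integrable under the hypotheses of Theorem~\ref{th5.3} (this is precisely the integrability underlying \eqref{eq5.15}; indeed, by \eqref{eq5.14}, \eqref{eq3.33} and $E_{s,x}K^s_T<\infty$, the expectation of the time integral along $X$ of $\fch_{\{u=\psi\}}\fch_{\{\Psi<0\}}(\Psi^{-}-L_Iu)$ equals $E_{s,x}K^s_T<\infty$). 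Hence $M$ is uniformly integrable, and since $\bar M$ is the integral of the bounded deterministic function $\theta\mapsto e^{-r(\theta-s)}$ against $M$ with $\bar M_s=0$, it too is a uniformly integrable martingale, so $E_{s,x}\bigl(\int^T_t d\bar M_\theta\mid\FF^s_t\bigr)=0$. Taking conditional expectations in the displayed identity and using $\tilde V_t=\bar Y_t$ then yields \eqref{eq6.1}.

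I do not expect a genuine obstacle: the result is an immediate corollary of Theorem~\ref{th5.3}(ii) once the discounting step of the proof of Theorem~\ref{th3.1} is reused, and the martingale property of $\bar M$ rests only on integrability already available. The one thing to be attentive to is that the integrand surviving the discounting is literally the driver of the equation in Theorem~\ref{th5.3}(ii), so that no new identity has to be derived.
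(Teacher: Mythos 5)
Your proof is correct and takes essentially the same route as the paper, which obtains \eqref{eq6.1} simply by applying Theorem \ref{th5.3}(ii), discounting exactly as in the passage from \eqref{eq3.06} to \eqref{eq2.11}, and taking conditional expectations, with the discounted martingale term vanishing by the uniform integrability you invoke. Note only that you (rightly) take the driver with the sign dictated by the definition of $M$ and by \eqref{eq5.15}, \eqref{eq3.33}, namely $+\fch_{\{u=\psi\}}\fch_{\{\Psi<0\}}(\Psi^{-}-L_Iu)$, which is the sign consistent with \eqref{eq6.1} even though the displayed BSDE in Theorem \ref{th5.3}(ii) carries a misprinted minus.
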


Putting $t=s$ in (\ref{eq6.1}), we get the following early exercise
premium representation formula.

\begin{corollary}
Let the assumptions of  Theorem \ref{th5.3} hold. Then, for all
$s\in[0,T)$ and $x\in D$,
\begin{align}
\label{eq6.2} u(s,x)&=u^E(s,x)\\
&\quad+E_{s,x}\int^T_se^{-r(t-s)} \fch_{\{u(t,X_t)=\psi(X_t)\}}
\fch_{\{\Psi(X_{t})<0\}}(\Psi^{-}(X_t)-L_Iu(t,X_t))\,dt,\nonumber
\end{align}
where
\[
u^E(s,x)=E_{s,x}e^{-r(T-s)}\psi(X_T)
\]
is the value of the European option with payoff function $\psi$
and expiration time $T$.
\end{corollary}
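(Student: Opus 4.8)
The plan is to obtain the asserted identity simply by evaluating the representation (\ref{eq6.1}) of the preceding corollary at the left endpoint $t=s$; that representation holds for all $t\in[s,T]$, so the value $t=s$ is admissible. The only points that need a word of justification are the identification of the left-hand side $\tilde V_s$ with $u(s,x)$ and the recognition of the first term on the right-hand side as the European price $u^E(s,x)$.

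First I would record that, by the very definition $\tilde V_t=e^{-r(t-s)}V_t$, we have $\tilde V_s=V_s$. By (\ref{eq2.10}) (equivalently (\ref{eq2.15})), $V_s=u(s,X_s)$, and since $X_s=x$ holds $P_{s,x}$-a.s.\ by the construction of $\BX^s$ (cf.\ (\ref{eq1.1}) and Remark \ref{rem2.1}(i)), this yields $\tilde V_s=u(s,x)$; in particular the left-hand side is deterministic, so the conditional expectation $E_{s,x}(\,\cdot\mid\FF^s_s)$ in (\ref{eq6.1}) reduces to the plain expectation $E_{s,x}$. Setting $t=s$ in (\ref{eq6.1}) therefore gives
\[
u(s,x)=E_{s,x}\big(e^{-r(T-s)}\psi(X_T)\big)
+E_{s,x}\!\int^T_s e^{-r(t-s)}\fch_{\{u(t,X_t)=\psi(X_t)\}}\fch_{\{\Psi(X_t)<0\}}(\Psi^{-}(X_t)-L_Iu(t,X_t))\,dt .
\]
Finally, by definition of the value of the European option, $E_{s,x}e^{-r(T-s)}\psi(X_T)=u^E(s,x)$, and substituting this into the display produces exactly (\ref{eq6.2}).

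I do not anticipate any real obstacle: the statement is a direct specialization of (\ref{eq6.1}), which in turn rests on Theorem \ref{th5.3}(ii). The finiteness (indeed $P_{s,x}$-integrability) of the correction term is not something to be re-established here, as it is already part of the BSDE representation in Theorem \ref{th5.3}(ii) under the standing hypotheses on $(a,\nu,\gamma)$, $\psi$ and $\tilde\psi$. The one point to remain mindful of is that the identity is asserted for $x\in D$, so that the trajectories of $\BX^s$ stay in a single orthant $D_\iota$ and $\psi$ is genuinely convex along them; this restriction is exactly the one already in force when (\ref{eq6.1}) was derived, so nothing further is needed.
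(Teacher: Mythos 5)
Your proposal is correct and is exactly the paper's argument: the paper derives (\ref{eq6.2}) simply by putting $t=s$ in (\ref{eq6.1}), and your additional remarks (that $\tilde V_s=V_s=u(s,x)$ under $P_{s,x}$ and that conditioning on $\FF^s_s$ reduces to the plain expectation $E_{s,x}$) merely spell out the steps the paper leaves implicit.
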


We close this section with some examples of continuous payoff
functions satisfying (\ref{eq3.1}). Using the results of Sections
4 and 5 in Rockafellar (1970), one can easily check that, apart from put
index option, in all examples $\psi$ is convex. In the case of put
index option, $\psi_{|D_{\iota}}$ are convex and can be extended
to convex functions on all of $\BR^d$. In each example, we have
computed the corresponding function $\Psi^{-}$ on the set
$\{\psi>0\}$ (see Proposition \ref{prop5.5}). From formulas for
$\Psi^{-}$, it will be clear that, in each case, $\Psi^{-}$ satisfies
(\ref{eq3.1}), and hence by Remark \ref{rem4.2}, $\Psi^{-}$
satisfies (\ref{eq5.7}). Moreover, the payoff functions in
Examples \ref{ex6.3} and \ref{ex6.4} are Lipschitz continuous, so
by Remark \ref{rem4.8}, $\tilde \psi\in\tilde H^1_{\rho}$ if
$\beta>p$. In Example \ref{ex6.5}, the payoff function $\psi$ is
not Lipschitz continuous, but the fact that $\tilde \psi\in\tilde
H^1_{\rho}$ follows directly from the formula for $\psi$.
Summarizing,  the payoff functions given below satisfy the
assumptions of Theorem \ref{th5.3}, so in each case, the results of
Sections \ref{sec4} and \ref{sec6} apply to exponential L\'evy
models satisfying (\ref{eq2.2}), (\ref{eq2.02}) and
(\ref{eq2.06}), (\ref{eq4.33}) with some $\varepsilon>0$,
$\beta>p$.

\begin{example}
\label{ex6.3} In the following examples $p=0$, so we can take
arbitrary $\beta>0$ in condition (\ref{eq4.33}). Therefore,
(\ref{eq6.1}), (\ref{eq6.2}) hold true  if (\ref{eq4.33}) is
satisfied  for some $\beta>0$ and (\ref{eq2.06}) is satisfied for
some $\varepsilon>0$. Also note that, in the case that $d=1$, the examples
below reduce to the American put considered in Lamberton and Mikou (2013).
\begin{enumerate}
\item
\underline{Min options (put)}
\[
\psi(x)=(K-\min\{x_1,\dots,x_d\})^{+}, \]
\[
\Psi^{-}(x)=\big(r K
-\sum_{i=1}^{d}\delta_{i}\mathbf{1}_{C_{i}}(x)x_i)^{+},
\quad\mbox{where}\quad C_{i}=\{x\in\BR^d: x_{i}<x_{j},\, j\neq
i\},
\]
if $x_i\ge0$ for $i=1,\dots,d$, and $\psi(x)=K$, otherwise.

\item \underline{Index options (put)} \smallskip\\
For simplicity, we consider the case $d=2$. Below,
$w_1\ge0,w_2\ge0$.
\[
\psi(x)=\big(K-\sum_{i=1}^{2} w_{i}x_i\big)^{+},\quad
\Psi^-(x)=\big(r K-\sum_{i=1}^{2} w_{i}\delta_{i}x_i\big)^{+}
\]
if $x_1\ge0,x_2\ge0$, $\psi(x)=(K-w_1x_1)^+$ if $x_1\ge0,x_2<0$,
$\psi(x)=(K-w_2x_2)^+$ if $x_2\ge0,x_1<0$ and $\psi(x)=K$ if
$x_1<0,x_2<0$.
\end{enumerate}
\end{example}

\begin{example}
\label{ex6.4} In the following examples $p=1$, so $\beta>1$.
Therefore, (\ref{eq6.1}), (\ref{eq6.2}) hold true  if
(\ref{eq4.33}) is satisfied for some $\beta>1$.
\begin{enumerate}

\item \underline{Spread option (put)}
\[
\psi(x)=\big(K-\sum_{i=1}^{2} w_{i}x_i\big)^{+},\quad
\Psi^-(x)=\big(r K-\sum_{i=1}^{d} w_{i}\delta_{i}x_i\big)^{+}
\]
(Here, $w_i\in\BR$, $i=1,\dots,d$).

\item \underline{Index options and spread options (call)}
\[
\psi(x)=\big(\sum_{i=1}^{d} w_{i}x_i-K\big)^{+},\quad
\Psi^-(x)=\big(\sum_{i=1}^{d} w_{i}\delta_{i}x_i-r K\big)^{+}
\]
(Here, $w_i\in\BR$, $i=1,\dots,d$).

\item \underline{Max options (call)}
\[
\psi(x)=(\max\{x_1,\dots,x_d\}-K)^{+}
\]
\[
\Psi^{-}(x)=\big(\sum_{i=1}^{d}
\delta_{i}\mathbf{1}_{B_{i}}(x)x_i-r K\big)^{+},
\]
where $B_{i}=\{x\in\BR^d: x_{i}>x_{j},\, j\neq i\}$.

\item \underline{Multiple strike options}
\[
\psi(x)=(\max\{x_1-K_{1},\dots, x_d-K_{d}\})^{+},
\]
\[
\Psi^{-}(x)=\big(\sum_{i=1}^{d}
\mathbf{1}_{B_{i}}(x-K)(\delta_{i}x_i-r K_{i})\big)^{+},
\]
where $K=(K_{1},\dots,K_{d})$ and the sets $B_i$ are defined as in
the preceding example.
\end{enumerate}
\end{example}


\begin{example}
\label{ex6.5} \underline{Power-product options}
\[
\psi(x)=(|x_1\cdot\ldots\cdot x_d|^{\gamma}-K)^+\quad \mbox{for
some }\gamma>1.
\]
If $x\in D_{\iota}$ with $\iota=(i_1,\dots,i_d)\in\{0,1\}^d$ then
\[
\Psi^{-}(x)=\big((r-\gamma\sum_{i=1}^d(r-\delta_i-a_{ii})
-\gamma^2\sum^d_{i,j=1}a_{ij})f(x)-r K\big)^{+},
\]
where $f(x)=((-1)^{|\iota|}\,x_1\cdot\ldots\cdot x_d)^{\gamma}$
and $|\iota|=i_1+\ldots+ i_d$. From the formula for $\psi$ and the
fact that the geometric mean is less then or equal to the
arithmetic mean, it follows that $\psi$ satisfies (\ref{eq3.1})
with $p=\gamma d$. It is also clear that, if $\beta>p$, then
$\tilde\psi$ defined by (\ref{eq4.22}) belongs to the space
$\tilde H^1_{\rho}$. Therefore, (\ref{eq6.1}), (\ref{eq6.2}) hold
true  if (\ref{eq4.33}) is satisfied for some $\beta>\gamma d$.
\end{example}


\begin{thebibliography}{35}

\bibitem{AA}
Alberti, G., and L. Ambrosio (1999): A geometrical approach to
monotone functions in $\BR^n$, {\em Math. Z.} {\bf 230},
259--316.

\bibitem{BCFE}
Bally, V.,  M.E. Caballero, B. Fernandez, and N. El Karoui,
(2002): Reflected BSDE's, PDE's and Variational Inequalities.
Research Report No. 4455, INRIA. $<$inria-00072133$>$.

\bibitem{BL}
Bensoussan, A., and J.-L. Lions (1982): {\em Contr\^ole
impulsionnel et in\'equations quasi variationnelles}.
Paris: Gauthier--Villars.

\bibitem{B}
Bouleau, N. (1984): Formules de changement de variables, {\em Ann.
Inst. H. Poincar\'e Probab. Statist.} {\bf 20}, 133--145.

\bibitem{BDHPS}
Briand, P., B. Delyon, Y. Hu, E. Pardoux, and L. Stoica (2003):
$L^p$ solutions of backward stochastic differential equations,
{\em Stochastic Process. Appl.} {\bf 108}, 109--129.

\bibitem{BD}
Broadie, M., and J. Detemple,  (1997): The valuation of American
options on multiply assets, {\em Math. Finance} {\bf 7},
241--286.

\bibitem{CP}
Carlen, E., and P. Protter, (1992): On semimartingale
decompositions of convex functions of semimartingales, {\em
Illinois J. Math.} {\bf 36}, 420--427.

\bibitem{CT}
Cont R., and P. Tankov, (2004): {\em Finance modelling with jump
processes}. Boca Raton, FL: Chapman \& Hall/CRC.


\bibitem{D}
Detemple, J. (2006):  {\em American-style derivatives. Valuation
and computation}. Boca Raton, FL: Chapman \& Hall/CRC.

\bibitem{DFT}
Detemple, J., S. Feng, and W. Tian (2003):  The valuation of
American call options on the minimum of two dividend-paying
assets, {\em Ann. Appl. Probab.} {\bf 13}, 953--983.

\bibitem{EKPPQ}
El Karoui, N., C. Kapoudjian,  \'E. Pardoux, S. Peng, and M.C.
Quenez, (1997): Reflected solutions of backward SDEs, and related
obstacle problems for PDE's, {\em Ann. Probab.} {\bf 25},
702--737.

\bibitem{EG}
Evans, L.C., and  R.F. Gariepy (1992): {\em Measure Theory and
Fine Properties of Functions}. Boca Raton, FL: CRC Press.

\bibitem{FOT}
Fukushima, M., Y. Oshima, and M. Takeda (2011): {\em Dirichlet
Forms and Symmetric Markov Processes.  Second revised and extended
edition}. Berlin: Walter de Gruyter.

\bibitem{GM}
Garroni, M.G.,  and J.-L. Menaldi (1992): {\em Green functions for
second order parabolic integro-differential problems}. Pitman
Research Notes in Mathematics Series, Vol. 275; Harlow: Longman
Scientific \& Technical.

\bibitem{G}
Grinberg, N.F. (2013): Semimartingale decomposition of convex
functions of continuous semimartingales by Brownian perturbation,
{\em ESAIM Probab. Stat.} {\bf 17}, 293--306.

\bibitem{Gu}
Gukhal, C.R. (2001): Analytical valuation of American options on
jump-diffusion processes, {\em Math. Finance} {\bf 11},
97--115.

\bibitem{HRSW}
Hilber, N., N. Reich, C. Schwab,  and C. Winter (2009): Numerical
methods for L\'evy processes, {\em Finance Stoch.} {\bf 13},
471--500.

\bibitem{JLL}
Jaillet, P., D. Lamberton, and B. Lapeyre (1990): Variational
Inequalities and the Pricing of American Options, {\em Acta. Appl.
Math.} {\bf 21}, 263--289.

\bibitem{KS}
Kinderlehrer, D., and G. Stampacchia (1980): {\em An introduction
to variational inequalities and their applications}. New York:
Academic Press.

\bibitem{K:SPA}
Klimsiak, T. (2015): Reflected BSDEs on filtered probability
spaces, {\em Stochastic Process. Appl.} {\bf 125}, 4204--4241.

\bibitem{KR:BPAN}
Klimsiak, T., and A. Rozkosz (2011): On backward stochastic
differential equations approach to valuation of American options,
{\em Bull. Polish Acad. Sci. Math.} {\bf 59}, 275--288.

\bibitem{KR:JFA}
Klimsiak, T., and A. Rozkosz (2013): Dirichlet forms and
semilinear elliptic equations with measure data, {\em J. Funct.
Anal.} {\bf 265}, 890--925.

\bibitem{KR:AMO}
Klimsiak, T., and A. Rozkosz (2016): The early exercise premium
representation for American options on multiply assets, {\em Appl.
Math. Optim.} {\bf 73}, 99--114.

\bibitem{LM1}
Lamberton, D., and M. Mikou (2008): The critical price for the
American put in an exponential L\'evy model, {\em Finance Stoch.}
{\bf 12}, 561--581.

\bibitem{LM2}
Lamberton, D., and M.A. Mikou (2013): Exercise boundary of the
American put near maturity in an exponential L\'evy model, {\em
Finance Stoch.} {\bf 17}, 355--394.

\bibitem{LS}
Laurence, P., and S. Salsa (2009): Regularity of the free boundary
of an American option on several assets, {\em Comm. Pure Appl.
Math.} {\bf 62}, 969--994.

\bibitem{LM}
Lions, J.-L., and E. Magenes (1968): {\em Probl\`emes aux limites
non homog\`enes et applications}, Paris: Dunod.

\bibitem{MPS}
Matache, A.-M.,  T. von Petersdorff,  and C. Schwab (2004): Fast
deterministic pricing of options on L\'evy driven assets, {\em
M2AN Math. Model. Numer. Anal.} {\bf 38}, 37--71.

\bibitem{Ph1}
Pham, H. (1997): Optimal stopping, free boundary, and American
option in a jump-diffusion model, {\em  Appl. Math. Optim.} {\bf
35}, 145--164.

\bibitem{Ph2}
Pham, H. (1998): Optimal stopping of controlled jump diffusion
processes: a viscosity solution approach, {\em J. Math. Systems
Estim. Control} {\bf 8}, 27 pp. (electronic).

\bibitem{Pr}
Protter, Ph. (2004): {\em Stochastic Integration and Differential
Equations. Second Edition}. Berlin: Springer.

\bibitem{RSW}
Reich, N., C. Schwab, and C. Winter (2010): On Kolmogorov
equations for anisotropic multivariate L\'evy processes, {\em
Finance Stoch.} {\bf 14}, 527--567.

\bibitem{RY}
Revuz, D., and M. Yor (1991): {\em Continuous Martingales and
Brownian Motion}. Belin: Springer.

\bibitem{Ro}
Rockafellar,  R.T. (1970): {\em Convex anlysis}. Princeton, NJ.:
Princeton University Press.

\bibitem{S}
Sato, S. (1999): {\em L\'evy processes and infinitely divisible
distributions}. Cambridge: Cambridge University Press.

\bibitem{V}
Villeneuve, S. (1999): Exercise regions of American options on
several assets, {\em Finance Stoch.} {\bf 3}, 295--322.

\bibitem{ZKO}
Zhikov, V.V.,  S.M. Kozlov, and O.A. Oleinik (1981):
$G$\,-\,convergence of parabolic operators. {\em Uspekhi Mat.
Nauk} {\bf 36}, 11--58; English transl., {\em Russian Math.
Surveys} {\bf 36}, 9--60.

\end{thebibliography}
\end{document}